\date{}
\newtheorem{theorem}{Theorem}[]
\newtheorem{proposition}{Proposition}[section]
\newtheorem{lemma}[proposition]{Lemma}
\newtheorem{cor}[proposition]{Corollary}
\theoremstyle{definition}
\newtheorem{df}{Definition}[section]
\newtheorem{remark}{Remark}[section]
\newcommand{\F}{\mathcal{F}} 
\newcommand{\rad}{\operatorname{r}}
\newcommand{\dgr}{d_{\mathrm{gr}}}
\renewcommand{\P}{\mathbb{P}} 
\newcommand{\E}{\mathbb{E}} 
\newcommand{\V}{\mathbb{V}} 
\newcommand{\sQ}{\mathsf{Q}} 
\newcommand{\LT}{\mathsf{LT}} 
\newcommand{\TR}{\mathsf{T}^{(r)}} 
\newcommand{\T}{\mathsf{T}} 
\renewcommand{\epsilon}{\varepsilon}
\newcommand{\CLR}{Y}
\newcommand{\CLT}{X}
\newcommand{\VLT}{{\widetilde{X}}}
\newcommand{\st}{: }
\DeclareSymbolFont{extraup}{U}{zavm}{m}{n}
\DeclareMathSymbol{\varheart}{\mathalpha}{extraup}{86}
\DeclareMathSymbol{\vardiamond}{\mathalpha}{extraup}{87}
\renewcommand*{\@fnsymbol}[1]{\ensuremath{\ifcase#1\or  \vardiamond \or \clubsuit\or \spadesuit\or
   \mathsection\or \mathparagraph\or \|\or **\or \dagger\dagger
   \or \ddagger\ddagger \else\@ctrerr\fi}}
\title{\bf \textsc{Polynomial mixing time of edge flips on quadrangulations}}
\author{Alessandra Caraceni\thanks{Department of Mathematical Sciences, University of Bath, UK.\hfill  \texttt{A.Caraceni@bath.ac.uk}} \hspace{12pt} \& \hspace{4pt} Alexandre Stauffer\thanks{Department of Mathematical Sciences, University of Bath, UK.\hfill  \texttt{A.Stauffer@bath.ac.uk}\newline The authors wish to acknowledge the support of EPSRC via the grant entitled ``Mathematical Analysis of Strongly Correlated Processes on Discrete Dynamic Structures''.}}
\begin{document}
\maketitle

\tikzstyle{vertex}=[circle, fill=black, inner sep=3pt, draw=white, ultra thick]
\tikzstyle{tree}=[ultra thick]
\tikzstyle{P}=[ultra thick, blue!60!white]
\tikzstyle{P'}=[ultra thick, green!70!black]
\tikzstyle{map}=[thick, red]
\tikzstyle{b}=[line width=4pt, white]

\abstract{We establish the first polynomial upper bound for the mixing time of random edge flips on rooted quadrangulations: we show that the spectral gap of the edge flip Markov chain on quadrangulations with $n$ faces admits, up to constants, an upper bound of $n^{-5/4}$ and a lower bound of $n^{-11/2}$. In order to obtain the lower bound, we also consider a very natural Markov chain on plane trees -- or, equivalently, on Dyck paths -- and improve the previous lower bound for its spectral gap by Shor and Movassagh.}

\section{Introduction}

Our work on quadrangulation edge flips places itself in the midst of a developing area of research whose origin can be partly traced back to a question of Aldous about triangulations of the $n$-gon~\cite{Ald94b}. The question concerns a discrete time edge flip Markov chain analogous to the one we will introduce, defined on the state space of triangulations of the regular $n$-gon (i.e.~on the possible sets of diagonals which partition the $n$-gon into triangular regions). A single step of the Markov chain, given a triangulation, consists of picking a diagonal at random, deleting it and replacing it with the opposite diagonal in the quadrilateral created by its absence. One would wish to analyse the growth of the mixing time of this chain (which is sometimes referred to as the \emph{triangulation walk}) as a function of the size $n$ of the triangulation (or, equivalently, of the size of the state space, which is exponential in $n$). Aldous conjectures an upper bound of $n^{3/2}$ (up to logarithmic factors in $n$) for the order of the relaxation time of this chain. In connection to this problem, he conjectures the same upper bound for a chain defined on $n$-cladograms, a type of binary tree structure with labelled leaves whose relevance also lies in its role as a formalisation of \emph{phylogenetic trees} from systematic biology, which model evolutionary relationships between species~\cite{Ald00}.

An important feature of triangulations of the $n$-gon is the fact that they are counted by Catalan numbers: more precisely, there are $C_{n-2}$ triangulations of the $n$-gon, where $C_n=\frac{1}{n+1}{2n\choose n}$. In fact, there is an extreme abundance of combinatorial structures which are counted by Catalan numbers, from Dyck paths to strings of matched parentheses to plane trees and beyond, with a thriving net of explicit bijections weaved between them, which often highlight surprising connections between the geometric features of different objects.

It seems therefore natural to attempt a systematic study of Markov chains defined on Catalan structures, but this task has proved very hard. For one thing, the natural notion of adjacency for different Catalan structures does not always translate well via sensible bijections, which gives rise to a rich panorama of different chains one might consider. But even concentrating on a single Markov chain has proved challenging so far, as attested by the relative scarceness of tight bounds for their mixing times, one notable exception being Wilson's result \cite{Wilson04} for adjacent transpositions on Dyck paths. 

Twenty years after a serious effort was started problems of this kind, we still do not have tight bounds for the mixing of the triangulation walk proposed by Aldous. Molloy, Reed and Steiger showed an $\Omega(n^\frac32)$ lower bound for its mixing time~\cite{MRS99}, while the best upper bound to date is McShine and Tetali's $O(n^5\log n)$ obtained in \cite{Catalan97}, where they analyse Markov chains on a number of other Catalan structures. As for $n$-cladograms, while the conjecture Aldous made in conjunction to the triangulation walk remains open, some chains have proved easier to analyse: Aldous himself showed an upper bound of $O(n^3)$ for the relaxation time of a particular chain \cite{Ald00}, improved to $O(n^2)$, which is tight, by Schweinsberg \cite{S02}; also note L\"ohr, Mytnik and Winter's work on the chain in the diffusion limit \cite{LMW18} as well as Forman, Pal, Rizzolo and Winkel's in a similar vein \cite{2018arXiv180907756F}. Furthermore, recent results for the mixing of a very natural chain on Dyck paths were obtained by Cohen, Tetali and Yeliussizov \cite{Catalan15} by rephrasing it as a basis exchange walk on a balanced matroid.

On the other hand, for many natural chains on Catalan structures, triangulations and related objects not even a polynomial upper bound for the mixing time is known. One such example is that of lattice triangulations, where polynomial bounds are only known for biased versions of the chain~\cite{CMSS15,CMSS16,S17}; see also works on rectangular dissections, for which polynomial bounds were obtained very recently \cite{CMRdyadic, CLSdyadic}. 

One may also consider edge flip Markov chains on planar maps, and in particular on the set of $p$-angulations of the sphere of size $n$ (with $p\geq 3$), that is the set of spherically embedded connected planar multigraphs with $n$ faces of degree $p$ (considered equivalent under orientation-preserving homeomorphisms of the sphere). An edge flip Markov chain on this state space can be defined as follows: at each step, an edge is selected uniformly at random, erased and replaced with one of the edges that can be drawn within the face of perimeter $2p-2$ left behind in order to form two faces of degree $p$. The only result shown so far for this chain pertains to the case of triangulations of the sphere ($p=3$), where the mixing time is known to be of order at least $n^{\frac54}$~\cite{B17}. No polynomial upper bound on the mixing time was known, prior to this work, for any $p\geq 3$.

In this paper we consider the case of rooted quadrangulations with $n$ faces (i.e~the case $p=4$, where maps are endowed with a distinguished oriented edge) and derive the first polynomial upper bound on the mixing time.

Note that quadrangulations in particular occupy a privileged position within the panorama of planar maps, mainly thanks to the famous bijections first developed by Cori, Vanquelin and Schaeffer~\cite{CV81,CS04}, which encode them with (different classes of) labelled plane trees, thus placing them within the framework of (generalised) Catalan structures.

The relation with trees has been exploited to obtain both scaling and local limit results which have led to the definition and subsequent investigation of objects such as the Brownian map~\cite{LG11,Mie11} and the UIPQ \cite{Men08,CMMinfini}, providing very rich insights into the geometric structure of uniform random large quadrangulations. In fact, it has been shown that a number of classes of uniform random planar maps converge to the Brownian map, whose universality makes quadrangulations of the sphere a very useful model for a random surface. Quadrangulations and, in general, planar maps are also very much studied in physics, in the context of quantum gravity, where the edge flip Markov chain is extensively applied in simulations.

Our contribution within this paper will consist in estimating the mixing time of the edge flip Markov chain $\F_n$ on the set of rooted quadrangulation of the sphere with $n$ faces, as described above and much more thoroughly in Section~\ref{section: flips}. In particular, we shall  prove the following.

\begin{theorem}\label{main theorem}
Let $\nu_n$ be the spectral gap of the edge flip Markov chain $\F_n$ on the set $\sQ_n$ of rooted quadrangulations with $n$ faces. There are positive constants $C_1,C_2$ independent of $n$ such that 
$$C_1n^{-\frac{11}{2}}\leq \nu_n\leq C_2n^{-\frac54}.$$
Consequently, the mixing time for $\F_n$ is $O(n^{13/2})$.
\end{theorem}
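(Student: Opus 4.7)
The proof splits naturally into the upper and lower bounds, which demand rather different techniques.

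For the \emph{upper bound} $\nu_n\leq C_2 n^{-5/4}$, the plan is to use the variational characterisation
\[\nu_n=\inf_{f\text{ non-constant}}\frac{\mathcal{E}_{\F_n}(f,f)}{\mathrm{Var}_{\pi_n}(f)}\]
and to exhibit an explicit test function $f:\sQ_n\to\mathbb{R}$ separating variance from Dirichlet form by a factor of order $n^{5/4}$. A natural candidate is a graph-distance statistic such as $f(Q)=d_Q(v_1,v_2)$ for two distinguished root vertices. By the scaling of a uniform quadrangulation towards the Brownian map (or directly from Chassaing--Schaeffer-type estimates), $f$ has variance of order $n^{1/2}$. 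On the other hand, only flips whose edge lies on (or adjacent to) a geodesic between $v_1$ and $v_2$ can alter $f$, and each such flip changes $f$ by $O(1)$; since a typical geodesic has length of order $n^{1/4}$, the probability of sampling a ``relevant'' edge is $O(n^{-3/4})$. Hence $\mathcal{E}_{\F_n}(f,f)=O(n^{-3/4})$, and the ratio yields $\nu_n=O(n^{-5/4})$, modulo care in making the distance and concentration estimates quantitative enough to hold for a positive fraction of the measure.

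For the \emph{lower bound} $\nu_n\geq C_1 n^{-11/2}$, I would transfer the problem via a Cori--Vauquelin--Schaeffer-type bijection to the setting of well-labelled plane trees, in which elements of $\sQ_n$ correspond to trees with $n$ edges carrying integer labels with increments in $\{-1,0,+1\}$. Although the edge-flip chain does not obviously pull back to a local chain on the tree side, one can decompose the analysis into (i) a natural Markov chain on the underlying \emph{unlabelled} plane tree, equivalently on Dyck paths of length $2n$, and (ii) a chain on the label configuration given the tree. A Jerrum--Son--Tetali--Vigoda-style decomposition theorem, or equivalently a carefully designed canonical flows/comparison argument, then relates the spectral gap of $\F_n$ to those of the two auxiliary chains on the tree side.

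The principal obstacle is step (i): the natural chain on plane trees/Dyck paths has to be shown to have spectral gap at least $n^{-\alpha}$ for an explicit $\alpha$, improving the bound of Shor and Movassagh. The plan is a canonical-paths construction between arbitrary Dyck paths, routed through a ``canonical'' path (e.g.\ the all-up-then-all-down path), with a delicate congestion analysis exploiting the Catalan structure. The exponent $11/2$ in the final bound should then be readable as the accumulation of (a) the exponent coming out of the Dyck path chain, (b) the cost of comparing flips on quadrangulations to moves on labelled trees (since labels fluctuate on scale $n^{1/4}$ along the tree contour, rearranging them via flip-compatible moves takes a polynomial overhead), and (c) the loss inherent in the decomposition step. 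Finally, the mixing-time bound $\mtime(\F_n)=O(n^{13/2})$ follows from $\nu_n\geq C_1 n^{-11/2}$ combined with the standard inequality $\mtime\leq \nu_n^{-1}\log(1/\pi_{\min})$ and the estimate $\log|\sQ_n|=\Theta(n)$ from Tutte's enumeration of rooted quadrangulations.
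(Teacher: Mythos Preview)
Your overall plan is in the right spirit, but both halves have real gaps compared to what the paper actually needs.

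\textbf{Upper bound.} The test-function idea is correct and close to the paper's, but your Dirichlet-form estimate does not hold as stated. It is \emph{not} true that only flips of edges on ``a geodesic'' can change $d_Q(v_1,v_2)$: any flip can decrease the distance by creating a shortcut, and to increase it the flipped edge must lie on \emph{every} geodesic, not a single one. So bounding ``the length of a typical geodesic'' by $n^{1/4}$ does not control the number of relevant edges. The paper sidesteps this by using the radius $\rad(q)=\max_v d(v,\rho)$ rather than a two-point distance, and then observing that a flip can \emph{increase} $\rad$ only if it destroys a chosen geodesic to some vertex in $S(q)=\{v:d(v,\rho)\ge \rad(q)-1\}$. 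This bounds the number of dangerous edges by $|S(q)|\cdot \rad(q)$, and a rerooting bijection on $\LT_n^+$ shows $|S(q)|$ has the same law as $|B_2(q)|-1$, which has exponential tails. Cauchy--Schwarz then closes the argument. Your sketch would need a comparable device; as written, the $O(n^{-3/4})$ bound on the Dirichlet form is unjustified.

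\textbf{Lower bound.} Here your proposal diverges more substantially from the paper, and the divergence is where the difficulty lies. You suggest decomposing the labelled-tree side into (i) an unlabelled-tree/Dyck-path chain and (ii) a label chain given the tree, then invoking a JSTV-style decomposition. The paper does \emph{not} do this: it treats a labelled tree as a $3$-coloured tree and analyses a single leaf-translation-plus-recolouring chain $\CLT$ on $\T^{(3)}_n$ directly, obtaining $\gamma_\CLT\ge Cn^{-9/2}$ via a randomised canonical-paths argument with a hierarchical structure (Proposition~5.1), not by routing through one canonical Dyck path. Routing everything through the ``all-up-then-all-down'' path would make the congestion far worse and would not recover $n^{-9/2}$.

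More importantly, you have entirely omitted the step that costs the extra factor of $n$ and is the technical core of the paper: the explicit construction of edge-flip paths in $\sQ_n^\bullet$ that simulate each elementary tree move (leaf translation, leaf recolouring, sign flip) through the Schaeffer bijection. The paper builds these paths case by case (Lemmas~6.2--6.8), shows each has length $O(n)$, and --- crucially --- proves that any fixed flip $(q,e,s)$ appears in at most $O(1)$ such paths (Lemmas~6.1, 6.6, 6.9). This bounded-congestion statement is what makes the Diaconis--Saloff-Coste comparison yield $\nu_n^\bullet\ge \widetilde\gamma/(Cn)\ge C_1 n^{-11/2}$, and it is far from automatic: a single recolouring $t\mapsto t^{v,-}$ can require reconnecting all edges incident to a high-degree vertex in $\phi(t,\epsilon)$. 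Your sentence ``rearranging labels via flip-compatible moves takes a polynomial overhead'' glosses over precisely the construction that the paper spends Sections~6.2--6.4 on; without it there is no proof. The final $\nu_n^\bullet\le\nu_n$ step (projecting out the pointing) is easy but also needs to be stated.
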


The upper bound for the spectral gap is the same as Budzinski's lower bound for the mixing time of flips on triangulations (which indeed it implies for our case $p=4$); the strategy by which we obtain it is quite general and would apply in a much broader context (cf.~Remark~\ref{upper bound generality}).

As for the lower bound, we obtain it through a comparison (achieved with techniques developed by Diaconis and Saloff-Coste \cite{diaconis96}) to a chain on labelled trees which arises via the aforementioned Schaeffer bijection. This chain, which has very natural interpretations on a number of Catalan structures, is a coloured generalisation of a chain on plane trees with $n$ edges whose steps are as follows. Given a tree, pick an edge uniformly at random, and if it is a leaf, then choose one among the following three options with equal probabilities: leave the leaf intact, slide it one step to the left or slide it one step to the right (see Figure~\ref{fig:translations}). If the chosen edge is not a leaf, then do nothing. This chain is also natural in the context of Dyck paths: it is essentially equivalent to picking a vertex of the path uniformly at random and, if the vertex is a peak, translating the peak one position to the right or to the left, with equal probabilities (Figure~\ref{fig:Dyck paths}).

Though apparently not yet analysed within the scope of existing mathematical research about chains on Catalan structures, this `leaf translation' chain is mentioned in the physics literature under the name of Fredkin spin chain, and a first lower bound of order $n^{-\frac{11}{2}}$ for its spectral gap is given by Movassagh~\cite{Mov16}, based on work by himself and Shor \cite{MS16}. We shall partially follow their argument, which is based on the method of building canonical paths to estimate the conductance, to produce an improved lower bound of order $n^{-\frac{9}{2}}$ (see Theorem~\ref{leaf translation gap}), which will be instrumental to obtain our result for flips on quadrangulations.

The paper is organised as follows. Sections~\ref{section: flips} and \ref{section: Schaeffer} will provide the reader with all relevant definitions and recall some details of the Schaeffer bijection, since they will be relevant to our subsequent constructions. In section~\ref{section: upper bound} we give an upper bound of order $n^{-\frac54}$ for the spectral gap of $\F_n$ by considering the Dirichlet form evaluated at the function that gives the radius of a quadrangulation. Section~\ref{section: Markov chain on trees} will acquaint the reader with the leaf translation Markov chain on plane trees (and a ``leaf replanting'' variant) and prove our lower bound for its spectral gap. Finally, a large portion of the paper -- namely, Section~\ref{section: lower bound} -- will be devoted to showing our lower bound for the spectral gap of $\F_n$ via a comparison with a chain on \emph{pointed} rooted quadrangulations, which bridges the gap between $\F_n$ and the leaf translation Markov chain on labelled plane trees.

\section{Edge flips on quadrangulations}\label{section: flips}
\begin{figure}\centering
\begin{tikzpicture}[scale=.9,real/.style={draw=white, fill=black, very thick, inner sep=2pt, circle}, bijection/.style={red, very thick}, new/.style={thick}, b/.style={line width=5pt, white}, none/.style={}]
	\path[use as bounding box] (-4.5,-3) rectangle (3,3);
	\begin{pgfonlayer}{nodelayer}
		\node [style=vertex] (0) at (-4.25, 1.75) {};
		\node [style=vertex, fill=red] (1) at (-2.5, -3) {};
		\node [style=vertex] (2) at (2.5, -2.25) {};
		\node [style=vertex] (3) at (1.5, 3) {};
		\node [style=vertex] (4) at (-1.75, -1.5) {};
		\node [style=vertex] (5) at (-2.5, 0.5) {};
		\node [style=vertex] (6) at (-1.5, -0.25) {};
		\node [style=vertex] (7) at (0.75, 1.5) {};
		\node [style=vertex] (8) at (0.25, -0.25) {};
		\node [style=vertex] (9) at (0, -2.5) {};
	\end{pgfonlayer}
	\begin{pgfonlayer}{edgelayer}
	\begin{scope}
	\clip (1.center) to (9.center) to (2.center) [bend left=15] to (1.center);
	\fill[fill=red!10, draw=red] (1) circle (40pt);
	\end{scope}
	\draw[ultra thick, red] (-0.95,-2.8) arc (10:-10:30pt);
	
		\draw [style=b, bend left] (1) to (2);
		\draw [style=b] (2) to (3);
		\draw [style=b, in=12, out=-168] (3) to (0);
		\draw [style=b, bend left=75, looseness=1.75] (5) to (4);
		\draw [style=b] (4) to (6);
		\draw [style=b] (4) to (1);
		\draw [style=b] (0) to (7);
		\draw [style=b] (7) to (2);
		\draw [style=b, bend right=45, looseness=1.50] (4) to (7);
		\draw [style=b] (8) to (7);
		\draw [style=b, in=-150, out=30] (4) to (7);
		\draw [style=b] (1) to (9);
		\draw [style=b] (5) to (4);
		\draw [style=b] (0) to (5);
		\draw [style=b] (0) to (1);
		\draw [style=b, bend right=15] (1) to (2);
		
		\draw [style=new, bend left] (1) to (2);
		\draw [style=new] (2) to (3);
		\draw [style=new, in=12, out=-168] (3) to (0);
		\draw [style=new, bend left=75, looseness=1.75] (5) to (4);
		\draw [style=new] (4) to (6);
		\draw [style=new] (4) to (1);
		\draw [style=new] (0) to (7);
		\draw [style=new] (0) to (5);
		\draw [style=new] (7) to (2);
		\draw [style=new, bend right=45, looseness=1.50] (4) to (7);
		\draw [style=new] (8) to (7);
		\draw [style=new, in=-150, out=30] (4) to (7);
		\draw [style=new] (1) to (9);
		\draw [style=new] (5) to (4);
		\draw [style=new] (0) to (1);
			\draw [style=new, bend right=15, ->, ultra thick] (1) to (2);
	\end{pgfonlayer}
\end{tikzpicture}\hspace{1cm}
\begin{tikzpicture}[scale=.7]
	\begin{pgfonlayer}{nodelayer}
		\node [style=vertex, label=above right:\contour{white}{$c_1$}] (0) at (-1, 2) {};
		\node [style=vertex, label=above left:\contour{white}{$c_4$}] (1) at (1, 2) {};
		\node [style=vertex, label=below left:\contour{white}{$c_3$}] (2) at (1, 4) {};
		\node [style=vertex, label=below right:\contour{white}{$c_2$}] (3) at (-1, 4) {};
		\node [style=vertex] (4) at (0, -3) {};
		\node[yshift=16pt, xshift=-8pt] (c4) at (4) {\contour{white}{$c_4$}};
		\node[yshift=16pt, xshift=8pt] (c4) at (4) {\contour{white}{$c_2$}};
		\node [style=vertex] (5) at (0, 0.75) {};
		\node [style=vertex] (6) at (0, -1.25) {};
		\node[yshift=-15pt] (c1) at (5) {\contour{white}{$c_1$}};
		\node[yshift=15pt] (c3) at (6) {\contour{white}{$c_3$}};
		\node[] (7) at (-0.75, 2.5) {};
		\node[] (8) at (-0.75, 3.5) {};
		\node[] (9) at (-0.5, 3.75) {};
		\node[] (10) at (0.5, 3.75) {};
		\node[] (11) at (0.75, 3.5) {};
		\node[] (12) at (0.75, 2.5) {};
		\node[] (13) at (0.5, 2.25) {};
		\node[] (14) at (-0.5, 2.25) {};
		\node[] (15) at (0.25, 0.25) {};
		\node[] (16) at (0.25, -2.5) {};
		\node[] (17) at (0.25, -1.5) {};
		\node[] (18) at (-0.25, -1.5) {};
		\node[] (19) at (-0.25, -2.5) {};
		\node[] (20) at (0.5, -2.25) {};
		\node (21) at (-0.5, -2.25) {};
		\node[] (22) at (-0.25, 0.25) {};
	\end{pgfonlayer}
	\begin{pgfonlayer}{edgelayer}
	\begin{scope}
	\clip (0.center) to (1.center) to (2.center) to (3.center) to (0.center);
	\draw[fill=red!20, draw=red] (0) circle (15pt);
	\draw[fill=red!20, draw=red] (1) circle (15pt);
	\draw[fill=red!20, draw=red] (2) circle (15pt);
	\draw[fill=red!20, draw=red] (3) circle (15pt);
	\end{scope}
	\begin{scope}
	\clip [bend left=60, looseness=1.25] (4.center) to (5.center) to (4.center);
	\draw[fill=red!20, draw=red] (4) circle (15pt);
	\draw[fill=red!20, draw=red] (5) circle (13pt);
	\draw[fill=red!20, draw=red] (6) circle (13pt);
	\end{scope}
		\draw[b] (0) to (1);
		\draw[b] (1) to (2);
		\draw[b] (2) to (3);
		\draw[b] (3) to (0);
		\draw[thick] (0) to (1) to (2) to (3) to (0);
		\draw [b, bend left=60, looseness=1.25] (4) to (5);
		\draw [b, bend right=60, looseness=1.25] (4) to (5);
		\draw [b] (4) to (6);
		\draw [thick, bend left=60, looseness=1.25] (4) to (5);
		\draw [thick, bend right=60, looseness=1.25] (4) to (5);
		\draw [ultra thick] (4) to (6);
	\end{pgfonlayer}
\end{tikzpicture}
\caption{\label{fig:quadrangulation}On the left, a quadrangulation $q$ in $\sQ_8$; notice that we may choose to embed it in the plane (rather than the sphere) in a canonical way by having the external face be the one lying directly to the right of the root edge. The origin of $q$ is marked in red. To the right, the two kinds of faces in a quadrangulation -- non-degenerate and degenerate -- with marked corners in clockwise order. The double edge in the degenerate face is the one adjacent to $c_3$, which is drawn with a thicker line.}
\end{figure}
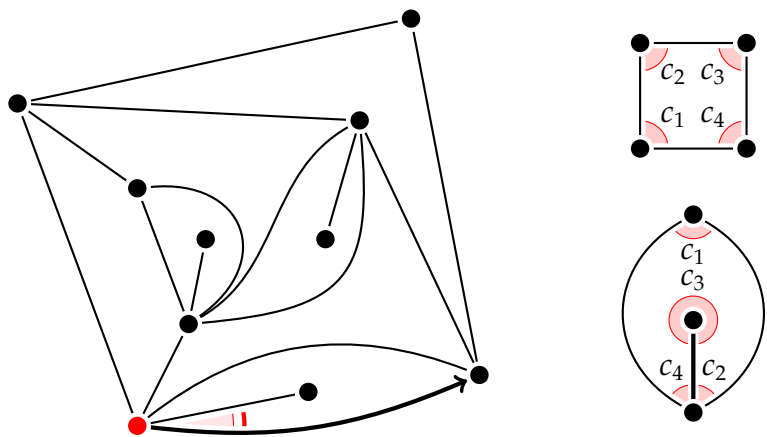

Throughout this paper we shall be dealing with certain Markov chains whose configuration space is the set of quadrangulations with a fixed number of faces; in order to introduce them, let us first discuss some notation.

First and foremost, we shall be adopting some of the language of \emph{planar maps}, with which we assume some familiarity: we will be referring to maps and their \emph{vertices}, \emph{edges}, \emph{faces}, as well as \emph{corners} and \emph{face contours}; we shall not review any definitions but the basic one, that is:

\begin{df}\label{def:planar map}A \emph{planar map} is a connected, locally finite planar multigraph endowed with a cellular embedding in the sphere $S^2$, considered up to orientation-preserving homeomorphisms of the sphere itself. We will call a \emph{rooted planar map of size $n$} a planar map with $n$ faces, endowed with one distinguished oriented edge.\end{df}

One can now define a \emph{quadrangulation of size $n$}, or \emph{of area $n$}, as a rooted planar map of size $n$ all of whose faces have four corners (see Figure~\ref{fig:quadrangulation}); we shall denote the set of all quadrangulations of size $n$ by $\sQ_n$. It follows from Euler's polyhedral formula that a quadrangulation $q\in\sQ_n$ has $2n$ edges and $n+2$ vertices; it is also worth noting that a quadrangulation is automatically bipartite, which implies that it has no loops (that is, it has no edges with only one endpoint). It may, however, have multiple edges between the same two endpoints, and edges which are adjacent to a single face; we call the latter \emph{double edges} of the face they belong to, and the face which contains them a \emph{degenerate face}. We shall often refer to the vertex that the root edge is issued from as \emph{the origin} of the quadrangulation.

Given a quadrangulation $q\in\sQ_n$ and an edge $e$ of $q$, we will denote by $q^{e,+}$ (resp.~$q^{e,-}$), \emph{the quadrangulation obtained from $q$ by flipping edge $e$ clockwise} (resp.~\emph{counterclockwise}); more formally, we mean the quadrangulation given by the following procedure:
\begin{itemize}
\item if $e$ is adjacent to two distinct faces of $q$, erase $e$ from $q$ (thus obtaining a new face with exactly 6 corners) and replace it with the edge obtained by rotating $e$ clockwise (resp. counterclockwise) by one corner (see Figure~\ref{fig:flips}).
\item if $e$ is a double edge within a degenerate face, let $v$ be the vertex of that face that is \emph{not} an endpoint of $e$ and let $w$ be the endpoint of $e$ having degree 1; erase $e$ and replace it with an edge within the same face having endpoints $v,w$. If $e$ is the root edge of $q$, let the newly drawn edge be the root of $q^{e,+}$ (resp.~$q^{e,-}$), oriented in the same way as before (with respect to $w$).\end{itemize}

\begin{figure}[t]\centering
\begin{minipage}{7cm}\begin{tikzpicture}
	\begin{pgfonlayer}{nodelayer}
		\node [style=vertex] (0) at (-4, 3) {};
		\node [style=vertex] (1) at (-4, 4) {};
		\node [style=vertex] (2) at (-3, 4) {};
		\node (x) at (-3,3.5) {\contour{white}{$e$}};
		\node [style=vertex] (3) at (-2, 4) {};
		\node [style=vertex] (4) at (-2, 3) {};
		\node [style=vertex] (5) at (-3, 3) {};
		\node [style=vertex] (6) at (-0.5, 4) {};
		\node [style=vertex] (7) at (0.5, 5) {};
		\node [style=vertex] (8) at (1.5, 4) {};
		\node [style=vertex] (9) at (0.5, 4) {};
		\node [style=vertex] (10) at (-0.5, 5) {};
		\node [style=vertex] (11) at (1.5, 5) {};
		\node [style=vertex] (12) at (0.5, 3) {};
		\node [style=vertex] (13) at (1.5, 2) {};
		\node [style=vertex] (14) at (0.5, 2) {};
		\node [style=vertex] (15) at (-0.5, 3) {};
		\node [style=vertex] (16) at (-0.5, 2) {};
		\node [style=vertex] (17) at (1.5, 3) {};
		\node [] (30) at (-1.5, 3.5) {$q$};
		\node [] (33) at (2.25, 4.5) {$q^{e,+}$};
		\node [] (34) at (2.25, 2.5) {$q^{e,-}$};
	\end{pgfonlayer}
	\begin{pgfonlayer}{edgelayer}
	\draw [thick, ->, bend left, black!30] (-1.4,3.85) to (-0.75,4.5);
	\draw [thick, ->, bend right, black!30] (-1.4,3) to (-0.75,2.5);
		\draw [very thick] (0) to (1);
		\draw [very thick] (1) to (2);
		\draw [very thick] (2) to (3);
		\draw [very thick] (3) to (4);
		\draw [very thick] (4) to (5);
		\draw [very thick] (5) to (0);
		\draw [very thick, red, ->] (5) to (2);
		\draw [very thick] (6) to (10);
		\draw [very thick] (10) to (7);
		\draw [very thick] (7) to (11);
		\draw [very thick] (11) to (8);
		\draw [very thick] (8) to (9);
		\draw [very thick] (9) to (6);
		\draw [blue, very thick, dashed] (9) to (7);
		\draw [very thick, red, ->] (6) to (11);
		\draw [very thick] (16) to (15);
		\draw [very thick] (15) to (12);
		\draw [very thick] (12) to (17);
		\draw [very thick] (17) to (13);
		\draw [very thick] (13) to (14);
		\draw [very thick] (14) to (16);
		\draw [very thick, red, ->] (13) to (15);
		\draw [blue, very thick, dashed] (14) to (12);
	\end{pgfonlayer}
\end{tikzpicture}\end{minipage}\quad
\begin{minipage}{6cm}
\begin{tikzpicture}
	\begin{pgfonlayer}{nodelayer}
		\node [style=vertex] (0) at (-1.5, -1) {};
		\node [style=vertex] (1) at (-1.5, -0.25) {};
		\node [style=vertex, label=right:$v$] (2) at (-1.5, 0.5) {};
		\node [style=vertex] (3) at (0.75, -1) {};
		\node [style=vertex, label=right:$v$] (4) at (0.75, 0.5) {};
		\node [style=vertex] (5) at (0.75, -0.25) {};
		\node [] (w) at (1.1, -0.25) {$w$};
		\node [] (w) at (-1.15, -0.25) {$w$};
		\node [] (6) at (-2.5, -0.25) {$q$};
		\node [] (7) at (2.5, -0.25) {$q^{e,+}=q^{e,-}$};
		\node (x) at (-1.5,-0.65) {\contour{white}{$e$}};
	\end{pgfonlayer}
	\begin{pgfonlayer}{edgelayer}
	\draw [thick, ->, bend left, black!30] (-0.75,-0.25) to (0,-0.25);
		\draw [very thick, bend left=60, looseness=1.25] (0) to (2);
		\draw [very thick, bend right=60, looseness=1.25] (0) to (2);
		\draw [very thick, red, ->] (0) to (1);
		\draw [very thick, bend left=60, looseness=1.25] (3) to (4);
		\draw [very thick, bend right=60, looseness=1.25] (3) to (4);
		\draw [very thick, red, <-] (5) to (4);
		\draw [very thick, blue, dashed] (5) to (3);
	\end{pgfonlayer}
\end{tikzpicture}\\[10pt]
\begin{tikzpicture}
	\begin{pgfonlayer}{nodelayer}
		\node [style=vertex] (0) at (-1.5, -1) {};
		\node [style=vertex] (1) at (-1.5, -0.25) {};
		\node [style=vertex, label=right:$v$] (2) at (-1.5, 0.5) {};
		\node [style=vertex] (3) at (0.75, -1) {};
		\node [style=vertex, label=right:$v$] (4) at (0.75, 0.5) {};
		\node [] (w) at (1.1, -0.25) {$w$};
		\node [] (w) at (-1.15, -0.25) {$w$};
		\node [style=vertex] (5) at (0.75, -0.25) {};
		\node [] (6) at (-2.5, -0.25) {$q$};
		\node [] (7) at (2.5, -0.25) {$q^{e,+}=q^{e,-}$};
		\node (x) at (-1.5,-0.55) {\contour{white}{$e$}};
	\end{pgfonlayer}
	\begin{pgfonlayer}{edgelayer}
	\draw [thick, ->, bend left, black!30] (-0.75,-0.25) to (0,-0.25);
		\draw [very thick, bend left=60, looseness=1.25] (0) to (2);
		\draw [very thick, bend right=60, looseness=1.25] (0) to (2);
		\draw [very thick, red, <-] (0) to (1);
		\draw [very thick, bend left=60, looseness=1.25] (3) to (4);
		\draw [very thick, bend right=60, looseness=1.25] (3) to (4);
		\draw [very thick, red, ->] (5) to (4);
		\draw [very thick, blue, dashed] (5) to (3);
	\end{pgfonlayer}
\end{tikzpicture}\end{minipage}
\caption{\label{fig:flips}Clockwise and counterclockwise flips for a simple and a double edge in a quadrangulation; if the edge is the root, its orientation is ``preserved'' as in the figure.}\end{figure}
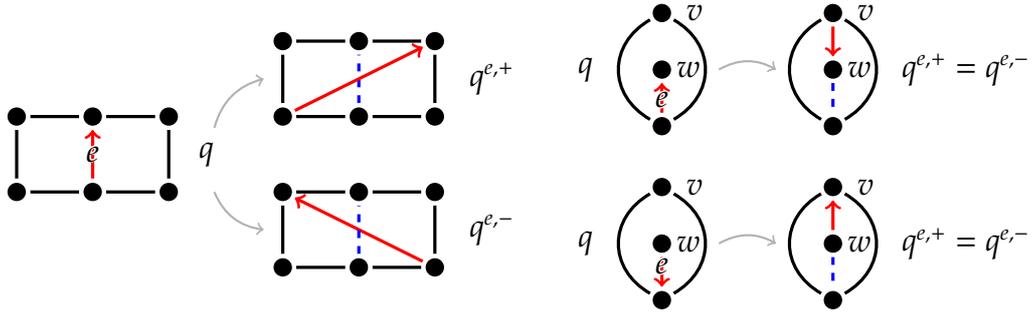

Throughout this paper, we will consider the Markov chain $\F^n$ of quadrangulation edge flips on the state space $\sQ_n$, whose transition probabilities are of the form $$p(q,q')=\frac{1}{6n}\sum_{e\in E(q)}\left(1_{q'=q^{e,+}}+1_{q'=q^{e,-}}+1_{q=q'}\right);$$
in other words, conditionally on $\F^n_k=q$, $\F^n_{k+1}$ can be determined by choosing an edge $e$ of $q$ uniformly at random and setting either $\F^n_{k+1}=q$, or $\F^n_{k+1}=q^{e,+}$, or $\F^n_{k+1}=q^{e,-}$, with equal probabilities.

Notice that, given a pair $(q,q')$ of distinct quadrangulations in $\sQ_n$, there are at most four distinct pairs $(e,s)$ in $E(q)\times \{+,-\}$ such that $q'=q^{e,s}$. In fact, assuming $e$ is not the root edge of $q$ then $e$ is uniquely determined by the pair $(q,q')$, and either $s$ is determined as well or, in the case where $e$ is a double edge, we have $q'=q^{e,+}=q^{e,-}$. In addition, if $e$ is not the root edge, it is possible that flipping the root edge might transform $q$ into $q'$; in other words, that $q'=q^{\eta,+}$ or $q'=q^{\eta,-}$, where $\eta\neq e$ is the root edge of $q$. Consequently, we have
$$\frac{1}{3 |E(q)|}=\frac{1}{6n}\leq p(q,q')\leq \frac{2}{3n}$$
for all $q,q'\in\sQ_n$ such that $p(q,q')\neq 0$ and $q\neq q'$.

Notice that, given $q\in\sQ_n$, $e\in E(q)$, $s\in\{+,-\}$, we can naturally identify vertices of $q$ with vertices of $q^{e,s}$, and edges of $q$ with edges of $q^{e,s}$ (where the edge $e$ corresponds to the edge redrawn by the flip procedure in $q^{e,s}$); we will therefore often refer to vertices or edges using the same notation in $q$ and $q^{e,s}$, when we wish to implicitly exploit such a correspondence. This, of course, will need to be done with some care, since the correspondence is not necessarily unique when the quadrangulations $q$ and $q^{e,s}$ are given, but $e$ and $s$ are not known.

\begin{remark}\label{reversibility}
The Markov chain $\F^n$ is reversible and aperiodic: indeed, we have $q'=q^{e,+}$ if and only if $q=(q')^{e,-}$, so $p(q,q')=p(q',q)$; furthermore, we have trivially that $p(q,q)\geq \frac{1}{3}$.	
\end{remark}

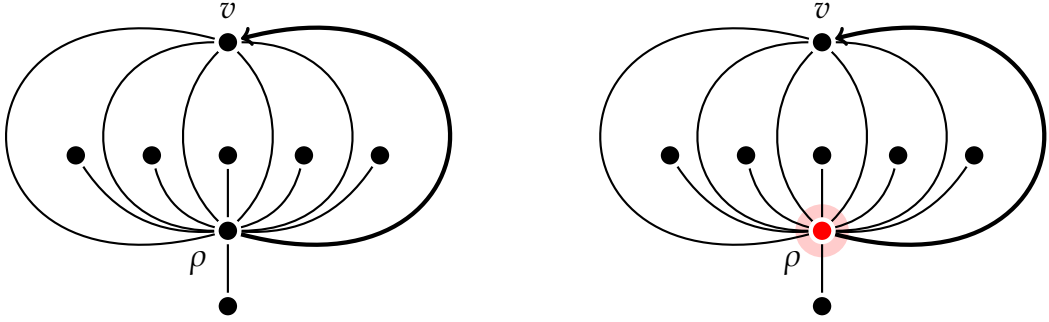
\begin{figure}\centering
\begin{tikzpicture}
	\begin{pgfonlayer}{nodelayer}
		\node [style=vertex, label=below left:$\rho$] (0) at (0, -1) {};
		\node [style=vertex, label=above:$v$] (1) at (0, 1.5) {};
		\node [style=vertex] (2) at (0, -0) {};
		\node [style=vertex] (3) at (1, -0) {};
		\node [style=vertex] (4) at (2, -0) {};
		\node [style=vertex] (5) at (0, -2) {};
		\node [style=vertex] (6) at (-1, -0) {};
		\node [style=vertex] (7) at (-2, -0) {};
	\end{pgfonlayer}
	\begin{pgfonlayer}{edgelayer}
		\draw [ultra thick, bend right=105, looseness=3.75, ->] (0) to (1);
		\draw [thick, bend right=105, looseness=3.75] (1) to (0);
		\draw [thick,bend left=90, looseness=2.00] (0) to (1);
		\draw [thick,bend right=90, looseness=2.00] (0) to (1);
		\draw [thick,bend right=45, looseness=1.00] (0) to (1);
		\draw [thick,bend left=45, looseness=1.00] (0) to (1);
		\draw [thick](2) to (0);
		\draw [thick,bend left, looseness=1.00] (3) to (0);
		\draw [thick,bend left, looseness=1.00] (4) to (0);
		\draw [thick,bend right, looseness=1.00] (6) to (0);
		\draw [thick,bend right, looseness=1.00] (7) to (0);
		\draw [thick] (0) to (5);
	\end{pgfonlayer}
\end{tikzpicture}
\begin{tikzpicture}
	\begin{pgfonlayer}{nodelayer}
		\node [style=vertex, label=below left:$\rho$, fill=red] (0) at (0, -1) {};
		\node [style=vertex, label=above:$v$] (1) at (0, 1.5) {};
		\node [style=vertex] (2) at (0, -0) {};
		\node [style=vertex] (3) at (1, -0) {};
		\node [style=vertex] (4) at (2, -0) {};
		\node [style=vertex] (5) at (0, -2) {};
		\node [style=vertex] (6) at (-1, -0) {};
		\node [style=vertex] (7) at (-2, -0) {};
	\end{pgfonlayer}
	\begin{pgfonlayer}{edgelayer}
		\fill[red!20] (0) circle (10pt);
		\draw [ultra thick, bend right=105, looseness=3.75, ->] (0) to (1);
		\draw [thick, bend right=105, looseness=3.75] (1) to (0);
		\draw [thick,bend left=90, looseness=2.00] (0) to (1);
		\draw [thick,bend right=90, looseness=2.00] (0) to (1);
		\draw [thick,bend right=45, looseness=1.00] (0) to (1);
		\draw [thick,bend left=45, looseness=1.00] (0) to (1);
		\draw [thick](2) to (0);
		\draw [thick,bend left, looseness=1.00] (3) to (0);
		\draw [thick,bend left, looseness=1.00] (4) to (0);
		\draw [thick,bend right, looseness=1.00] (6) to (0);
		\draw [thick,bend right, looseness=1.00] (7) to (0);
		\draw [thick] (0) to (5);
	\end{pgfonlayer}
\end{tikzpicture}
\caption{\label{fig:q_0}The quadrangulation $q_0$ in $\sQ_6$, with 6 degenerate faces arranged so that the degree of the origin is 12; to the right, the pointed version from Lemma~\ref{pointed irreducibility}.}
\end{figure}

\begin{lemma}\label{irreducibility}
The Markov chain $\F^n$ is irreducible.
\end{lemma}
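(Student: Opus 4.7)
The plan is to fix the canonical reference quadrangulation $q_0\in\sQ_n$ depicted in Figure~\ref{fig:q_0} and to prove that every $q\in\sQ_n$ can be transformed into $q_0$ by a finite sequence of flip moves. Combined with the reversibility recorded in Remark~\ref{reversibility} (the inverse of a clockwise flip on a simple edge is the corresponding counterclockwise flip on the same edge, with an analogous statement in the double-edge case), this gives a two-sided flip path between any two quadrangulations $q,q'$ via the intermediate $q_0$, which is exactly the statement of irreducibility.

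To produce the reduction to $q_0$, I would proceed in two phases governed by a monovariant. Let $\Phi(q)$ denote the number of degenerate faces of $q$ that are incident to the origin $\rho$; then $0\le \Phi(q)\le n$, and a quadrangulation with $\Phi(q)=n$ is a ``bouquet'' of degenerate petals based at $\rho$. The first phase is driven by a progress lemma: whenever $q$ is not yet a bouquet, a bounded number of flips produces some $q'$ with $\Phi(q')>\Phi(q)$. Iterating the lemma at most $n$ times brings $q$ to a bouquet $q_{*}$. The second phase reshuffles the petals of $q_{*}$ by a short sequence of local flips at $\rho$ (and, if needed, one flip of the root edge) so as to recover exactly $q_0$.

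The progress lemma itself would be proved by a local argument. If $q$ is not a bouquet at $\rho$, then after conceptually stripping away the $\Phi(q)$ degenerate petals already present the remainder still meets $\rho$, and by connectivity there is a non-petal face $f$ sharing at least one corner with $\rho$. Because $f$ is a quadrangular face adjacent to $\rho$, a case analysis --- based on how many of the four corners of $f$ lie at $\rho$, on which of its bounding edges are simple or already double, and on whether any of them coincides with the root --- identifies a specific edge $e$ of $f$ whose flip transforms $f$ into a new degenerate face attached to $\rho$. Since the flip of $e$ affects only the four corners of $f$ and those of the neighbouring face, the $\Phi(q)$ petals already sitting at $\rho$ are untouched (they are edge-disjoint from $f$ and share only the vertex $\rho$), so $\Phi$ strictly increases.

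The main obstacle is the combinatorial case analysis within the progress lemma, together with the bouquet-finishing step. One must check that in every local configuration at $\rho$ there is a valid flip creating a fresh degenerate petal at $\rho$, while respecting the special rules for double edges (where $q^{e,+}=q^{e,-}$) and for the root edge (whose orientation is preserved by a flip). Each individual case is elementary, but enumerating them cleanly and completely --- especially the delicate cases where $f$ shares two or three of its corners with $\rho$, or where $f$ itself already contains a double edge --- is the technical heart of the argument.
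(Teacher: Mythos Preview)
Your plan differs from the paper's, and the central progress lemma --- that for any non-bouquet $q$ one can find a face $f$ at $\rho$ and an edge $e$ of $f$ whose single flip produces a new degenerate petal at $\rho$ --- is false as stated. Take $q$ to be the boundary of the cube (the unique simple quadrangulation in $\sQ_6$ with all vertices of degree~$3$), with any rooting. Here $\Phi(q)=0$, and for every edge $e$ the two faces adjacent to $e$ form a hexagon with six \emph{distinct} corners; the flipped edge therefore has endpoints distinct from those of every other edge on the hexagon, so neither of the two new faces is degenerate. Hence $\Phi(q^{e,\pm})=0$ for every choice of $e$, and no single flip can increase $\Phi$. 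Your statement also wavers between ``a bounded number of flips'' and ``a specific edge $e$ whose flip\ldots''; the cube shows the latter is impossible, and if you fall back on the former you lose the cheap argument that existing petals are untouched, since a multi-flip local procedure may well traverse an edge shared with a petal already at $\rho$.

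The paper sidesteps all of this by using a monovariant that \emph{does} admit a one-flip progress step: the degree of the origin $\rho$. If $\deg\rho<2n$, one finds a neighbour $v$ of $\rho$ and an edge $e=(v,w)$ with $w\neq\rho$ that is immediately followed (clockwise around $v$) by an edge $(v,\rho)$; a counterclockwise flip of $e$ then acquires $\rho$ as an endpoint, so $\deg\rho$ goes up by one. Once $\deg\rho=2n$, every face has $\rho$ at two of its four corners (by bipartiteness $\{\rho\}$ is one colour class), and one then increases the degree of the other root endpoint $v$ by flipping an edge of a non-degenerate face containing $v$; bipartiteness guarantees such flips do not move any edge off $\rho$, so $\deg\rho$ stays at $2n$. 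When $\deg\rho=2n$ and $\deg v=n$ the quadrangulation is forced to be $q_0$. This two-stage degree argument replaces your petal-creation case analysis entirely and requires no delicate preservation claims.
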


\begin{proof}
Let $q_0$ be the quadrangulation with $n$ degenerate faces and such that the origin has the maximum possible degree (that is $2n$, all edges being incident to it) -- see Figure~\ref{fig:q_0}. We show that, given any quadrangulation $q\in\sQ_n$, one can obtain $q_0$ from $q$ with a sequence of edge flips. 

Indeed, given any quadrangulation $q$, unless the degree of the origin $\rho$ is $2n$, one can increase it via an edge flip. Suppose not all edges have $\rho$ as an endpoint and let $v$ be a neighbour of $\rho$ that has at least one neighbour different from $\rho$; then, if you consider edges issued from $v$ in clockwise order around $v$, there must be an edge $e$ with second endpoint $w\neq \rho$, followed by one with endpoints $v$ and $\rho$. Remark that $q^{e,-}$ has an origin with degree increased by one with respect to $q$.

We may therefore suppose that $q$ is a quadrangulation in $\sQ_n$ whose origin $\rho$ has degree $2n$. Let $v$ be the second endpoint of the root edge in $q$; we will show that, unless $\deg v=n$, there is an edge flip of $q$ increasing the degree of $v$ and not decreasing the degree of $\rho$. Indeed, notice that flipping any edge which is not a double edge inside a degenerate face does not change the degree of $\rho$; this is because every quadrangulation is bipartite, and in particular the bipartition of $q$'s vertices has one class consisting of $\rho$ only, and one consisting of $V(q)\setminus\{\rho\}$. The bipartition can be changed only by flips of degenerate edges, so any other flip will transform an edge having $\rho$ as an endpoint to another edge having $\rho$ as an endpoint. Consider now all edges adjacent to $v$; if $v$ has strictly less than $n$ adjacent edges, then it must be part of a face that is not degenerate (if it is only adjacent to degenerate faces, then the fact that all edges have $\rho$ as an endpoint implies $q=q_0$). Consider any edge $e$ of such a face not having $v$ as an endpoint: then either $q^{e,-}$ or $q^{e,+}$ has the degree of $v$ increased by one, and the degree of $\rho$ unchanged. 

Now, if $q\in\sQ_n$ has root edge $(\rho,v)$ with $\deg \rho=2n$ and $\deg v=n$, then $q=q_0$, as desired. Then reversibility (cf.~Remark~\ref{reversibility}) implies that $\F^n$ is irreducible.
\end{proof}

As a consequence of Lemma~\ref{irreducibility} and Remark~\ref{reversibility}, $\F^n$ admits the uniform measure on $\sQ_n$ as its (unique) stationary distribution.

We will see later how, rather than the set $\sQ_n$, it will be convenient to consider the set $\sQ_n^\bullet$ of all \emph{pointed} quadrangulations with $n$ faces, that is the set $\{(q,v) \st q\in\sQ_n, v\in V(q)\}$. The Markov chain $\F^n$ can be easily extended to a Markov chain $\F^{\bullet,n}$ with state space $\sQ_n^\bullet$, by redefining the (clockwise and counterclockwise) flips so that the distinguished vertex is preserved, thanks to the natural identification between $V(q^{e,s})$ and $V(q)$. Notice that, if $F:\sQ_n^\bullet\to\sQ_n$ is the forgetful map that rids quadrangulations of the pointing, for a quadrangulation $q_\bullet$ in $\sQ_n^\bullet$ we have $F(q_\bullet^{e,s})=F(q_\bullet)^{e,s}$, where we are treating $e$ both as an edge of $q_\bullet$ and as an edge of $F(q_\bullet)$, since $F$ does induce a natural identification for both vertices and edges.

Reversibility is of course still true, but one has to go a little further to prove irreducibility of $\F^{\bullet,n}$.

\begin{lemma}\label{pointed irreducibility}Let $q_0\in\sQ_n^{\bullet}$ be the quadrangulation with $n$ degenerate faces, rooted in an oriented edge $(\rho,v)$ such that $\rho$ has degree $2n$ and $v$ has degree $n$, pointed in $\rho$ (see Figure~\ref{fig:q_0}). Then any quadrangulation $q\in\sQ_n^\bullet$ can be turned into $q_0$ with a sequence of flips. In particular, $\F^{\bullet,n}$ is irreducible.
\end{lemma}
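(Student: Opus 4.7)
The plan is to proceed in two phases: first, to maneuver so that the marked vertex $v$ comes to coincide with the origin of the current rooted quadrangulation; second, to apply the strategy of Lemma~\ref{irreducibility} using only non-root flips, which will preserve this coincidence as we drive the structure to $q_0$.

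For the first phase, given $(q,v)\in\sQ_n^\bullet$, we select an edge $e$ of $q$ incident to $v$ (one exists by connectedness of $q$) and consider $q^\star\in\sQ_n$, the rooted quadrangulation having the same underlying unrooted map as $q$ but rooted at $e$ with $v$ as source. By Lemma~\ref{irreducibility}, $q$ and $q^\star$ are connected in $\F^n$; lifting the corresponding flip sequence to $\F^{\bullet,n}$ starting at $(q,v)$, and using the fact that each flip modifies a single edge while leaving the vertex set untouched, we carry the mark $v$ across consecutive states without displacement via the natural identifications. The endpoint of the sequence is therefore $(q^\star,v)$, in which $v$ coincides with the origin of $q^\star$ by construction.

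For the second phase, starting from $(q^\star,v)$ with $v$ at the origin, we simulate the procedure from the proof of Lemma~\ref{irreducibility}: we first use non-root flips to raise the degree of the origin to $2n$, and then further non-root flips to raise the degree of the second endpoint of the root to $n$. Since the root edge is never touched, both the origin and the mark remain anchored at $v$ throughout the entire sequence. The resulting rooted quadrangulation has origin of degree $2n$, second-root-endpoint of degree $n$, and $n$ leaves attached to the origin; as an element of $\sQ_n$, this is precisely $q_0$, with $v$ identified with $\rho$. Hence in $\sQ_n^\bullet$ we have reached $(q_0,\rho)$, and combining with the reversibility of $\F^{\bullet,n}$ (which is inherited from $\F^n$) we conclude irreducibility.

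The principal subtlety lies in the first phase: one must verify that the vertex-preserving nature of individual flips (each of which modifies only a single edge) translates through the entire flip sequence into the desired alignment of $v$ with the origin of $q^\star$. This is a bookkeeping check that reconciles the local vertex identifications with the equivalence of rooted maps under orientation-preserving homeomorphisms, and once representatives are chosen consistently, it presents no essential obstacle beyond what is already handled in Lemma~\ref{irreducibility}.
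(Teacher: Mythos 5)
Your Phase 2 — driving $(q^\star,v)$ to $(q_0,\rho)$ by the degree-raising argument of Lemma~\ref{irreducibility} while never touching the root edge — is sound, and you are correct that the flips used in that argument never flip the root. The problem is in Phase 1.

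You pick a flip sequence $\gamma$ in $\F^n$ from $q$ to $q^\star$ supplied by Lemma~\ref{irreducibility} (concretely, $q\to q_0\to q^\star$, with the second leg run in reverse), lift it to $\F^{\bullet,n}$ starting at $(q,v)$, and assert that the endpoint is $(q^\star,v)$. That assertion does not follow from ``each flip leaves the vertex set untouched.'' Each single flip $q_i\to q_{i+1}$ does induce a canonical bijection $V(q_i)\to V(q_{i+1})$, but the composition of these bijections along $\gamma$ gives some bijection $\Psi\colon V(q)\to V(q^\star)$ which there is no reason whatsoever to equal the ``identity'' identification coming from the fact that $q$ and $q^\star$ share an underlying unrooted map. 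The lifted path therefore ends at $(q^\star,\Psi(v))$, not at $(q^\star,v)$, and since rooted maps have no nontrivial automorphisms, $(q^\star,\Psi(v))$ and $(q^\star,v)$ are distinct states of $\F^{\bullet,n}$ whenever $\Psi(v)\neq v$. Your sequence passes through $q_0$, whose structure bears no relation to that of $q$, so expecting the accumulated identifications to return $v$ to its original position is unrealistic. This is not a bookkeeping matter that is ``already handled in Lemma~\ref{irreducibility}'': that lemma says nothing about tracking a marked vertex, and the whole content of the present lemma is precisely the extra work needed to do so.

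The paper circumvents this by reversing the order of the two steps. Starting from $(q,\delta)$, it first applies the degree-raising flips from Lemma~\ref{irreducibility} to $\delta$ itself (rather than to the origin), ignoring the root edge entirely, until $\deg\delta=2n$. At that point every edge is incident to $\delta$, so in particular the root edge is; if its orientation points towards $\delta$ rather than away from it, a short explicit local flip sequence (three root flips, or a five-flip sequence when the root is a double edge) reverses the orientation so that $\delta$ becomes the origin. This rerooting is manifestly pointing-preserving because it is a handful of flips in a single (degenerate) face, with no long detour through $q_0$. Only then does the proof apply your Phase~2 to bring the second root endpoint up to degree $n$. If you want to salvage your structure, you would need to replace your Phase~1 with a rerooting sequence that is demonstrably marked-vertex-preserving; the cleanest way to do that is exactly to first collapse $q$ onto a canonical form centred at $\delta$, which is what the paper does.
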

\begin{proof}
Turning $q$ into $q_0$ can be done with a very similar procedure to Lemma~\ref{irreducibility}. First, if $\delta$ is the one distinguished vertex of $q$, one can apply flips until they obtain a quadrangulation $q'$, similar to $q_0$ but where $\delta$ has degree $2n$. If $\delta$ turns out to be the origin, then we are done. Otherwise, if the root edge $e$ of $q'$ is not a double edge within a degenerate face, all we need to do is reverse its orientation by taking $(((q')^{e,+})^{e,+})^{e,+}$: this will make $\delta$ the origin and preserve its degree $\deg \delta=2n$. However, if $e$ is a double edge of $q'$, one only needs to flip clockwise the edge $e'$ that comes before $e$ in the clockwise contour of the degenerate face containing $e$. Then one can flip $e$ clockwise three times, then $e'$ counterclockwise, to have $\delta$ as the origin and preserve its degree. Note that, at this point, the root edge is a double edge. We can then proceed as in Lemma~\ref{irreducibility} to increase the degree of the second endpoint $v\neq\delta$ of the root edge until it is $n$. Notice that this only entails flipping edges that do not already have $v$ as an endpoint, so the root edge will not be flipped and the final quadrangulation will be correctly rooted in an edge issued from $\delta$.
\end{proof}

As a consequence of the lemma above, the stationary distribution for $\F^{\bullet,n}$ is the uniform measure on $\sQ_n^\bullet$.

Our aim in this paper will be to prove upper and lower bounds for the spectral gap $\nu_n$ of the Markov chain $\F^n$; we will rely on the Markov chain $\F^{\bullet,n}$ for the known bijections available between the set $\sQ_n^{\bullet}$ and certain sets of labelled trees, which we will briefly discuss in the next section. Dealing with $\F^{\bullet,n}$ will still provide information about $\F^n$: any lower bound for $\nu_n^\bullet$ will serve as a lower bound for $\nu_n$, as per the following lemma.

\begin{lemma}\label{pointed comparison}For the spectral gap $\nu_n^\bullet$ of $\F^{\bullet,n}$ and the spectral gap $\nu_n$ of $\F_n$, we have $\nu_n^\bullet\leq \nu_n$.\end{lemma}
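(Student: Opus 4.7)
The plan is to view $\F^{\bullet,n}$ as a lift of $\F^n$ along the forgetful projection $F:\sQ_n^\bullet\to\sQ_n$, and then exploit the variational (Rayleigh-quotient) characterization of the spectral gap. The first observation is that $F$ is an $(n+2)$-to-one map (a quadrangulation with $n$ faces has $n+2$ vertices), hence $|\sQ_n^\bullet|=(n+2)|\sQ_n|$, and the pushforward of the uniform measure on $\sQ_n^\bullet$ is the uniform measure on $\sQ_n$. The second observation, which uses the natural identification of vertices and edges between $q$ and $q^{e,s}$ that the paper has already set up, is that the flip rule on $\sQ_n^\bullet$ is defined so that the distinguished vertex is preserved. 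Consequently, the transition kernels match: writing $p$ and $p^\bullet$ for the respective kernels, one has $p^\bullet((q,v),(q',v'))=p(q,q')$ when $v=v'$ under the identification, and $0$ otherwise.

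Given any non-constant test function $g:\sQ_n\to\mathbb{R}$, I would lift it to $\tilde g := g\circ F$ on $\sQ_n^\bullet$. A one-line computation (splitting the sum over $\sQ_n^\bullet$ into a sum over $\sQ_n$ weighted by the fibre size $n+2$) gives
$$\mathbb{E}_{\pi^\bullet}[\tilde g^k]=\mathbb{E}_\pi[g^k]\quad(k=1,2),\qquad\text{hence}\qquad \operatorname{Var}_{\pi^\bullet}(\tilde g)=\operatorname{Var}_\pi(g).$$
The same splitting, combined with the fact that transitions of $\F^{\bullet,n}$ preserve the marked vertex so that $\tilde g(q,v)-\tilde g(q',v')=g(q)-g(q')$ along any edge of positive weight, yields $\mathcal{E}^\bullet(\tilde g,\tilde g)=\mathcal{E}(g,g)$. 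Taking the infimum of the Rayleigh quotient $\mathcal{E}^\bullet(\tilde g,\tilde g)/\operatorname{Var}_{\pi^\bullet}(\tilde g)$ over the strictly smaller family of \emph{lifted} functions $\{g\circ F\}$ is an upper bound on the infimum over \emph{all} non-constant functions on $\sQ_n^\bullet$, and this latter infimum is $\nu_n^\bullet$; so $\nu_n^\bullet\le \nu_n$.

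There is really no substantive obstacle here; the only point that requires a moment of care is checking that the natural vertex identification used to define the pointed flip really does make the two kernels project onto each other (in particular that when the root edge is a double edge flipped as in Figure~\ref{fig:flips}, the vertex set identification is still well-defined, as the paper has already discussed). Once that is in place, the argument is the classical comparison of a Markov chain with its lift along a measure-preserving projection, and no combinatorial input about quadrangulations is needed beyond counting vertices.
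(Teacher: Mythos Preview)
Your proposal is correct and follows essentially the same approach as the paper: lift an optimal test function $g$ on $\sQ_n$ to $g\circ F$ on $\sQ_n^\bullet$, observe that the Dirichlet form and variance are preserved (using $|\sQ_n^\bullet|=(n+2)|\sQ_n|$ and the fact that pointed flips preserve the distinguished vertex), and conclude via the variational characterization of the spectral gap. The paper's proof is slightly more terse but structurally identical.
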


\begin{proof}
The proof is quite immediate, since we can write
$$\nu_n=\mathcal{E}_{\F^n}(f,f)=\frac12\sum_{\substack{q\in\sQ_n\\e\in E(q)\\s\in\{+,-\}}}(f(q)-f(q^{e,s}))^2\frac{1}{|\sQ_n|6n}$$
for some function $f:\sQ_n\to\mathbb{R}$ such that $\E_\pi(f)=0$ and $\V_\pi(f)=1$ (where $\pi$ is the uniform measure on $\sQ_n$).

Now, setting $F:\sQ^\bullet_n\to\sQ_n$ to be the forgetful function which rids a quadrangulation of its distinguished vertex, consider the function $f\circ F:\sQ^\bullet_n\to\mathbb{R}$. We have $\E_{\pi^\bullet}(f\circ F)=0$ and $\V_{\pi^{\bullet}}(f\circ F)=1$, where $\pi^\bullet$ is the uniform measure on $\sQ_n^\bullet$. For the spectral gap $\nu_n^\bullet$ of $\F^{n,\bullet}$, we have
$$\nu^\bullet_n\leq\mathcal{E}_{\F^{n,\bullet}}(f\circ F,f\circ F)=\frac12\sum_{\substack{q\in\sQ_n^\bullet\\e\in E(q)\\s\in\{+,-\}}}\left(f(F(q))-f(F(q^{e,s}))\right)^2\cdot\frac{1}{(n+2)|\sQ_n|}\cdot\frac{1}{6n}$$
$$=\frac12\sum_{\substack{q\in\sQ_n^\bullet\\e\in E(F(q))\\s\in\{+,-\}}}\left(f(F(q))-f(F(q)^{e,s})\right)^2\cdot\frac{1}{(n+2)|\sQ_n|}\cdot\frac{1}{6n}=\mathcal{E}_{\F^n}(f,f)=\nu_n,$$
as claimed.\end{proof}

\section{The Schaeffer bijection}\label{section: Schaeffer}

	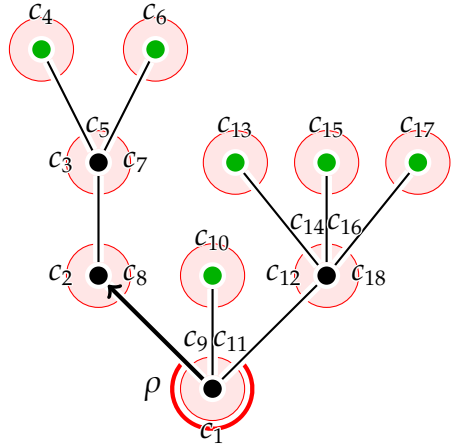
\begin{figure}\centering
	\begin{tikzpicture}[real/.style={circle,fill=black,draw=white, ultra thick, inner sep=3pt}, scale=1.5]
		\node[real, label={[yshift=-30pt]\contour{white}{$c_1$}}, label={[yshift=4.5pt, xshift=-6pt]\contour{white}{$c_9$}}, label={[yshift=4.5pt, xshift=7pt]\contour{white}{$c_{11}$}}] (0) at (0,0) {};
		\node[real, label=left:\contour{white}{$c_2$}, label=right:\contour{white}{$c_8$}] (01) at (-1,1) {};
		\node[real, label=left:\contour{white}{$c_{12}$}, label=right:\contour{white}{$c_{18}$}, label={[yshift=6pt, xshift=-7pt]\contour{white}{$c_{14}$}}, label={[yshift=6pt, xshift=7pt]\contour{white}{$c_{16}$}}] (02) at (1,1) {};
		\node[real, label=above:\contour{white}{$c_{10}$}, fill=green!70!black] (03) at (0,1) {};
		\node[real, label=above:\contour{white}{$c_{13}$}, fill=green!70!black] (021) at (0.2,2) {};
		\node[real, label=above:\contour{white}{$c_{15}$}, fill=green!70!black] (022) at (1,2) {};
		\node[real, label=above:\contour{white}{$c_{17}$}, fill=green!70!black] (023) at (1.8,2) {};
		\node[real, label=left:\contour{white}{$c_3$}, label=above:\contour{white}{$c_5$}, label=right:\contour{white}{$c_7$}] (011) at (-1,2) {};
		\node[real, label=above:\contour{white}{$c_4$}, fill=green!70!black] (0111) at (-1.5,3) {};
		\node[real, label=above:\contour{white}{$c_6$}, fill=green!70!black] (0112) at (-0.5,3) {};
		
		\node[] at (-15pt,0) {$\rho$};
		
		\pgfonlayer{edgelayer}

		\fill[fill=red!10, draw=red] (01) circle (8pt);
		\fill[fill=red!10, draw=red] (02) circle (8pt);
		\fill[fill=red!10, draw=red] (03) circle (8pt);
		\fill[fill=red!10, draw=red] (011) circle (8pt);
		\fill[fill=red!10, draw=red] (0111) circle (8pt);
		\fill[fill=red!10, draw=red] (0112) circle (8pt);
		\fill[fill=red!10, draw=red] (021) circle (8pt);
		\fill[fill=red!10, draw=red] (022) circle (8pt);
		\fill[fill=red!10, draw=red] (023) circle (8pt);
		\fill[fill=red!10, draw=red] (0) circle (8pt);
		\draw[ultra thick, red] (7pt,7pt) arc (45:-230:10pt);
		 
		\draw[white, line width=4pt] (0)--(01) (0)--(02) (0)--(03);
		\draw[white, line width=4pt] (01)--(011);
		\draw[white, line width=4pt] (011)--(0111) (011)--(0112);
		\draw[white, line width=4pt] (02)--(021) (02)--(022) (02)--(023);
		
		\draw[->, ultra thick] (0)--(01);
		\draw[thick] (0)--(02) (0)--(03);
		\draw[thick] (01)--(011);
		\draw[thick] (011)--(0111) (011)--(0112);
		\draw[thick] (02)--(021) (02)--(022) (02)--(023);
		
		\endpgfonlayer
	\end{tikzpicture}
	\caption{\label{fig:plane tree}A \emph{plane tree} with $9$ edges, whose 18 corners are labelled according to their order in the clockwise contour; the tree is rooted in the marked oriented edge, or equivalently has the corner labelled 1 as a distinguished corner. Leaves are marked in green and are defined as vertices other than the origin having degree 1, i.e.~only one corner.}
		\end{figure}
		
In order to obtain lower bounds for the spectral gap of $\F^n$, we will find it convenient to compare it to the spectral gap of a certain Markov chain on the state space of (labelled) trees. A key ingredient to set up this comparison will be a well-known bijection often referred to as the Schaeffer correspondence~\cite{CS04,CV81}.

Although this bijection and its variants have been described in a number of papers, we shall still give a very brief presentation of the construction of labelled trees from rooted, pointed quadrangulations and vice-versa, since part of it will be heavily relied upon in the rest of the paper.

\begin{df}A plane tree is a rooted planar map with a single face.	
\end{df}

We will often find it convenient to see the root of a plane tree as a distinguished corner rather than a distinguished oriented edge; in what follows, we shall refer to the \emph{clockwise contour} of a tree (see Figure~\ref{fig:plane tree}) as the cyclic sequence $(c_i)_{i=1}^{2n}$ of its corners (where $n$ is the number of its edges); we number the corner in such a way that $c_1$ is the root corner, that is the corner of the origin lying immediately to the left of the root edge. Given a vertex $v$ of a plane tree other than the origin, we shall write $p(v)$ for its parent; notice that each edge of a tree may be univocally written in the form $(v,p(v))$, where $v$ is a vertex of the tree other than the origin. Vertices of degree 1, with the exception of the origin, will be called \emph{leaves}. 

We shall call $\T_n$ the set of all plane trees with $n$ edges; trees with zero edges do not conform to the definition above, but we will still find it convenient to define $\T_0$ to be $\{\bullet\}$ by convention, where ``$\bullet$'' is the graph with one vertex and no edges.

\begin{df}
A \emph{labelled tree} is a plane tree $t$ endowed with a labelling $l:V(t)\to \mathbb{Z}$ such that
\begin{itemize}
\item if $\rho$ is the origin of $t$, $l(\rho)=0$;
\item for any vertex $v\in V(t)\setminus\{\rho\}$, $|l(v)-l(p(v))|\in \{1,-1,0\}$.	
\end{itemize}
We shall call $\LT_n$ the set of all labelled trees with $n$ edges, and set $\LT_0=\{\bullet\}$.
\end{df}

Notice that, equivalently, a labelled tree could simply be presented as a plane tree whose edges are three-coloured (the colours being $\{1,-1,0\}$); if $c(e)$ is the colour of the edge $e$, labels of vertices could be recovered by setting $l(v)=\sum_{e\in P(v)}c(e)$, where $P(v)$ is the one simple path leading from $v$ to the origin (or the empty path if $v$ is the origin itself). Throughout the paper, we will use both points of view; it will therefore be useful to introduce a more general notation for plane trees whose edges are $r$-coloured ($r$ being some fixed positive integer); we will write $\TR_n$ for the set
$$\left\{(t,C)\st t\in\T_n, C\in\{1,\ldots,r\}^{E(t)}\right\}.$$

For convenience, we will often refer to a labelled or $r$-coloured tree with a single symbol such as $t$, and consider the labelling or colouring to be implicit; in the case of labelled trees, we will usually call the labelling $l$ without further comment, and sometimes naturally extend it to corners, thus writing $l(c)$ when we mean $l(v)$, where $v$ is the vertex of $t$ that $c$ is adjacent to.

The reason for our definition of $\LT_n$ is the fact that the sets $\sQ_n^\bullet$ and $\LT_n\times\{-1,1\}$ have the same cardinality; moreover, pointed quadrangulations can be interpreted as pairs $(t,\epsilon)$, where $t$ is a labelled tree and $\epsilon\in\{-1,1\}$, in a rather natural way. As promised, we give here a description of how to construct an element of $\sQ_n^\bullet$ from an element of $\LT_n$ and a sign $\epsilon\in\{-1,1\}$ via the (unconstrained) Schaeffer correspondence; we include a brief description of the inverse construction for completeness and clarity, but this will not be explicitly used in the proofs to come.

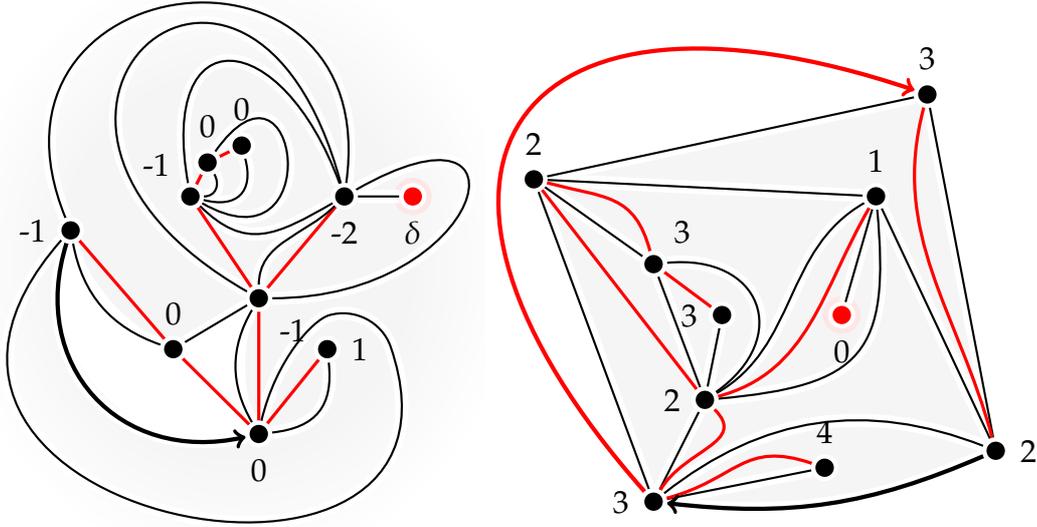
\begin{figure}\centering
\begin{tikzpicture}[scale=.9,real/.style={draw=white, fill=black, very thick, inner sep=2pt, circle}, bijection/.style={red, very thick}, new/.style={thick}, b/.style={line width=5pt, white}, none/.style={}]
	\path[use as bounding box] (-4,-4.5) rectangle (3.3,3.5);
		\begin{pgfonlayer}{nodelayer}
		\node [style=vertex, label=below:0] (0) at (0, -3) {};
		\node [style=vertex, label=0] (1) at (-1.25, -1.75) {};
		\node [style=vertex, label=below right:-1] (2) at (0, -1) {};
		\node [style=vertex, label=right:1] (3) at (1, -1.75) {};
		\node [style=vertex, label=left:-1] (4) at (-2.75, 0) {};
		\node [style=vertex, label=above left:-1] (5) at (-1, 0.5) {};
		\node [style=vertex, label=below:-2] (6) at (1.25, 0.5) {};
		\node [style=vertex, label=0] (7) at (-0.75, 1) {};
		\node [style=vertex, label=0] (8) at (-0.25, 1.25) {};
		\node [style=vertex, fill=red, label=below:$\delta$] (9) at (2.25, 0.5) {};
	\end{pgfonlayer}
	\begin{pgfonlayer}{edgelayer}
	\pgfdeclareradialshading{shaded}{\pgfpoint{0cm}{0cm}}%
{color(0cm)=(gray!8);
color(0.6cm)=(gray!8);
color(1cm)=(white)}
	\shade[shading=shaded, use as bounding box] (-4,-4.5) rectangle (3.3,3.5);
	\fill[white, bend left=60, looseness=1.25] (0.center) to (4.center) [bend right=20, looseness=1] to (1.center) -- (2.center) [bend right] to (0.center);
	\node [circle, inner sep=5pt, fill=red!10] at (2.25, 0.5) {};
	
		\draw [style=b, bend left=60, looseness=1.25] (0) to (4);
		\draw [style=b, bend right] (4) to (1);
		\draw [style=b, bend left=105, looseness=2.50] (4) to (6);
		\draw [style=b] (1) to (2);
		\draw [style=b, bend left] (0) to (2);
		\draw [style=b, in=105, out=150, looseness=6.25] (2) to (6);
		\draw [style=b, in=110, out=100, looseness=2.9] (5) to (6);
		\draw [style=b, in=0, out=-60, looseness=1.25] (7) to (5);
		\draw [style=b, bend left=60, looseness=1.50] (8) to (5);
		\draw [style=b, in=-30, out=55, looseness=7] (7) to (5);
		\draw [style=b, in=-150, out=-45, looseness=1.25] (5) to (6);
		\draw [style=b, in=-135, out=90] (2) to (6);
		\draw [style=b] (6) to (9);
		\draw [style=b, bend left=45, looseness=1.25] (3) to (0);
		\draw[style=b, bend left, rounded corners=18pt] (0) to (1.7,-1.05) [bend right=-120, looseness=3] to (4);
			\draw [style=b, in=30, out=0, looseness=4.00] (2) to (6);
			
		\draw [style=bijection] (0) to (1);
		\draw [style=bijection] (0) to (2);
		\draw [style=bijection] (0) to (3);
		\draw [style=bijection] (1) to (4);
		\draw [style=bijection] (2) to (5);
		\draw [style=bijection] (2) to (6);
		\draw [style=bijection] (5) to (7);
		\draw [style=bijection] (7) to (8);
		
		\draw [style=new, bend left=60, looseness=1.25, ultra thick, <-] (0) to (4);
		\draw [style=new, bend right] (4) to (1);
		\draw [style=new, bend left=105, looseness=2.50] (4) to (6);
		\draw [style=new] (1) to (2);
		\draw [style=new, bend left] (0) to (2);
		\draw [style=new, in=105, out=150, looseness=6.25] (2) to (6);
		\draw [style=new, in=110, out=100, looseness=2.9] (5) to (6);
		\draw [style=new, in=0, out=-60, looseness=1.25] (7) to (5);
		\draw [style=new, bend left=60, looseness=1.50] (8) to (5);
		\draw [style=new, in=-30, out=55, looseness=7] (7) to (5);
		\draw [style=new, in=-150, out=-45, looseness=1.25] (5) to (6);
		\draw [style=new, in=-135, out=90] (2) to (6);
		\draw [style=new] (6) to (9);
		\draw [style=new, bend left=45, looseness=1.25] (3) to (0);
		\draw[style=new, bend left, rounded corners=18pt] (0) to (1.7,-1.05) [bend right=-120, looseness=3] to (4);
			\draw [style=new, in=30, out=0, looseness=4.00] (2) to (6);
	\end{pgfonlayer}\end{tikzpicture}\quad
	\begin{tikzpicture}[scale=.9,real/.style={draw=white, fill=black, very thick, inner sep=2pt, circle}, bijection/.style={red, very thick}, new/.style={thick}, b/.style={line width=5pt, white}, none/.style={}]
\path[use as bounding box] (-4.5,-3.5) rectangle (3,3);
	\begin{pgfonlayer}{nodelayer}
		\node [style=vertex, label=2] (0) at (-4.25, 1.75) {};
		\node [style=vertex, label=left:3] (1) at (-2.5, -3) {};
		\node [style=vertex, label=right:2] (2) at (2.5, -2.25) {};
		\node [style=vertex, label=3] (3) at (1.5, 3) {};
		\node [style=vertex, label=left:2] (4) at (-1.75, -1.5) {};
		\node [style=vertex, label=above right:3] (5) at (-2.5, 0.5) {};
		\node [style=vertex, label=left:3] (6) at (-1.5, -0.25) {};
		\node [style=vertex, label=1] (7) at (0.75, 1.5) {};
		\node [style=vertex, label=below:0, fill=red] (8) at (0.25, -0.25) {};
		\node [style=vertex, label=4] (9) at (0, -2.5) {};
	\end{pgfonlayer}
	\begin{pgfonlayer}{edgelayer}
		\fill[gray!8] (0.center)--(1.center) [bend right=15] to (2.center)--(3.center)--(0.center);
				\node [circle, inner sep=5pt, fill=red!10] at (0.25, -0.25) {};
		\draw [style=b, bend left] (1) to (2);
		\draw [style=b] (2) to (3);
		\draw [style=b] (3) to (0);
		\draw [style=b, bend left=75, looseness=1.75] (5) to (4);
		\draw [style=b] (4) to (6);
		\draw [style=b] (4) to (1);
		\draw [style=b] (0) to (7);
		\draw [style=b] (7) to (2);
		\draw [style=b, bend right=45, looseness=1.50] (4) to (7);
		\draw [style=b] (8) to (7);
		\draw [style=b, in=-150, out=30] (4) to (7);
		\draw [style=b] (1) to (9);
		\draw [style=b, bend right=15] (1) to (2);
		\draw [style=b] (5) to (4);
		\draw [style=b] (0) to (5);
		\draw [style=b] (0) to (1);
		\draw [style=new, bend left] (1) to (2);
		\draw [style=new] (2) to (3);
		\draw [style=new] (3) to (0);
		\draw [style=new, bend left=75, looseness=1.75] (5) to (4);
		\draw [style=new] (4) to (6);
		\draw [style=new] (4) to (1);
		\draw [style=new] (0) to (7);
		\draw [style=new] (7) to (2);
		\draw [style=new, bend right=45, looseness=1.50] (4) to (7);
		\draw [style=new] (8) to (7);
		\draw [style=new, in=-150, out=30] (4) to (7);
		\draw [style=new] (1) to (9);
		\draw [style=new] (5) to (4);
		\draw [style=new] (0) to (5);
		\draw [style=new] (0) to (1);
		
		\draw [style=bijection, in=15, out=-120] (7) to (4);
		\draw [style=bijection, in=60, out=-45, looseness=1.25] (4) to (1);
		\draw [style=bijection] (0) to (4);
		\draw [style=bijection] (5) to (6);
		\draw [style=bijection, in=105, out=-15, looseness=1.25] (0) to (5);
		\draw [style=bijection, in=105, out=-105] (3) to (2);
		\draw [style=bijection, in=15, out=161, looseness=1.25] (9) to (1);
		\draw [style=bijection, bend left=75, looseness=2.25, ultra thick, ->] (1) to (3);
				\draw [style=new, bend right=15, <-, ultra thick] (1) to (2);
	\end{pgfonlayer}
\end{tikzpicture}
	\caption{\label{Schaeffer+-}The (unconstrained) Schaeffer bijection. On the left, the map from the labelled tree (in red) to the pointed quadrangulation (in black); the distinguished vertex is marked in red, and the numbers represent the labels on the tree. On the right, the map from the pointed quadrangulation (in black) to the labelled tree (in red); the distinguished vertex is marked in red, and numbers represent distances to the distinguished vertex in the quadrangulation. The two quadrangulations above are the same, although the unbounded face in the embedding on the right corresponds to the white inner face on the left.}
	\end{figure}
\bigskip
\noindent{\bf Construction of a mapping $\phi$ from $\LT_n$ to $\sQ_n^\bullet$ ($\phi:(\tau,\epsilon)\mapsto q$)}
\begin{itemize}\item consider the clockwise (cyclic) contour $(c_i)_{i=1}^{2n}$ of $\tau$, started at the distinguished corner, and let $\ell$ be the minimal label appearing on vertices of $\tau$;
\item for each corner $c_i$ labelled at least $\ell+1$, set $k=\min\{j>0\st l(c_{i+j})=l(c_i)-1\}$; join $c_i$ to $c_{i+k}$ with an edge (so that edges being drawn do not cross, see Figure~\ref{Schaeffer+-});
\item draw a new vertex $\delta$ within the unbounded face of the tree and join each corner labelled $\ell$ to $\delta$ with a new edge (again, so as not to cross any previously drawn edges);
\item root the map thus obtained in the newly drawn edge issued from the distinguished corner of $\tau$, oriented away from the origin of $\tau$ if $\epsilon=-1$, towards it if $\epsilon=1$; make $\delta$ the distinguished vertex;
\item erase all edges of $\tau$ and forget all labels.
\end{itemize}
\bigskip
\noindent{\bf Construction of a mapping $\phi^{-1}$ from $\sQ_n^\bullet$ to $\LT_n$ ($\phi^{-1}:q\mapsto (\tau,\epsilon)$)}
\begin{itemize}
\item label all vertices in $q$ with their graph distance to the distinguished vertex $\delta$, thus defining a labelling $l:V(q)\to\mathbb{N}$; for each face of $q$, read the labels of the vertices adjacent to its four corners cyclically according to a  \emph{clockwise} contour. Given two successive corners $c_i$ and $c_{i+1}$ in a clockwise contour of a face $f$, we say $c_i$ is a down-step corner of $f$ if the label of $c_{i+1}$ is strictly smaller than that of $c_i$ (notice that, since the map is bipartite, the label of $c_{i+1}$ is either one more or one less than that of $c_i$, hence each face has exactly 2 down-step corners);
\item draw a new edge within each face of $q$, joining its two down-step corners;
\item consider the root edge $(e_{-},e_+)$ of $q$, and let $f_l$ and $f_r$ be the faces lying \emph{left} and \emph{right} of $(e_{-},e_+)$ respectively (of course, the two may coincide); if $l(e_-)<l(e_+)$, set $\epsilon=1$ and choose as new root the edge being drawn between a corner adjacent to $e_+$ and the other down-step corner of $f_l$, oriented away from $e_+$; if $l(e_-)>l(e_+)$, set $\epsilon=-1$ and root in the edge drawn between a corner of $e_-$ and a down-step corner of $f_r$, oriented away from $e_-$; 
\item subtract $l(e_-)$ (if $\epsilon=-1$) or $l(e_+)$ (if $\epsilon=1$) from all labels: this way the label of the new origin is~$0$;
\item erase all original edges and the distinguished vertex $\delta$.\end{itemize}

\begin{theorem}[Schaeffer correspondence]\label{Schaeffer}
	The construction $\phi:\LT_n\times\{-1,1\}\to\sQ_n^\bullet$ described above is a bijection and $\phi^{-1}$ is its inverse; given $t\in\LT_n$ and $\epsilon\in\{-1,1\}$, the mapping $\phi$ naturally induces an identification between vertices of $t$ and vertices of $\phi(t,\epsilon)$ such that, if $l$ is the labelling of $t$, we have $l(v)=\dgr(v,\delta)-\dgr(\delta,\rho)$, where $v$ is interpreted as a vertex of $t$ in the left hand side of the equation and as a vertex of $\phi(t,\epsilon)$ in the right hand side, $\rho$ is the origin of $\phi(t,\epsilon)$ and $\delta$ its distinguished vertex. 
\end{theorem}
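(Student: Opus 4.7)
The plan is to break the proof into four parts: (a) verifying $\phi(\tau,\epsilon)\in\sQ_n^\bullet$; (b) establishing the distance identity $l(v)=\dgr(v,\delta)-\dgr(\delta,\rho)$, showing along the way that labels on the new map agree with distances to $\delta$ up to a global shift; (c) using (b) to read off $\phi^{-1}\circ\phi=\mathrm{id}$; and (d) checking that $\phi^{-1}$ is well-defined on all of $\sQ_n^\bullet$, from which $\phi\circ\phi^{-1}=\mathrm{id}$ follows by direct verification.

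For (a), planarity of the successor arcs rests on a nesting argument. If $c_{i+k}$ is the successor of $c_i$, then labels of consecutive contour corners differ by at most one, so no intermediate $c_{i+j}$ with $0<j<k$ can have label below $l(c_i)$; hence the successor of $c_{i+j}$ lands at or before $c_{i+k}$, and the resulting system of arcs nests planarly (with a natural tie-breaking rule when two arcs land at the same corner). The same idea handles the arcs to $\delta$. To see that every face of the resulting map is a quadrangle, I would first count: each of the $2n$ corners generates exactly one new edge, giving $n+2$ vertices and $2n$ edges, hence by Euler exactly $n$ faces. A face-to-edge correspondence then closes the argument: the four sides of each face wind around a single edge $(v,p(v))$ of $\tau$, and a case analysis on $l(v)-l(p(v))\in\{-1,0,+1\}$ identifies the two successor arcs (or arcs to $\delta$, in the minimum-label case) that complete the quadrangle. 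I expect this case analysis to be the \emph{main technical obstacle}, as the roles of the boundary arcs differ with each case and must be reconciled with the planar embedding.

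For (b), I would extend the labelling by setting $l(\delta):=\ell-1$, where $\ell$ is the minimum label on $\tau$. By construction, each new edge joins vertices whose labels differ by exactly one, so $\dgr(v,\delta)\geq l(v)-l(\delta)$ for every $v$. The reverse inequality is realised by the explicit path that iterates the successor arc from any corner at $v$, each step reducing the label by one, until reaching a minimum-label corner and finally $\delta$; this path has length $l(v)-l(\delta)$. The prescription for the root of $\phi(\tau,\epsilon)$ then gives $\dgr(\delta,\rho)=-l(\delta)$, yielding the identity of the statement.

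For (c) and (d), the labels used by $\phi^{-1}$ --- graph distances to $\delta$ --- agree with $l$ up to an additive shift by (b). Under this identification, the two down-step corners of each face of $\phi(\tau,\epsilon)$ are precisely the two corners at the endpoints of the corresponding edge of $\tau$ (by the face-to-edge correspondence established in (a)), so drawing the down-step diagonals reconstructs $\tau$ and the rooting and sign $\epsilon$ match by inspection. To see that $\phi^{-1}$ is well-defined on an arbitrary $q\in\sQ_n^\bullet$, bipartiteness forces each face to have exactly two down-step corners, so $\phi^{-1}$ draws exactly $n$ diagonals on the $n+1$ non-$\delta$ vertices; a local-surgery argument (cutting along the diagonals one face at a time) shows this produces a connected, acyclic graph with labels obeying the $\pm 1$-increment condition, i.e.\ an element of $\LT_n$. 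Then $\phi\circ\phi^{-1}=\mathrm{id}$ follows face by face from the same correspondence.
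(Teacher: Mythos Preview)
The paper does not give its own proof of this theorem: it is stated as a known result, with references to the original papers~\cite{CS04,CV81}, and no proof environment follows the statement. So there is nothing in the paper to compare your attempt against.

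That said, your sketch follows the standard route taken in the literature (in particular in~\cite{CS04}): the nesting argument for planarity of successor arcs, the face--tree-edge correspondence via the three cases of $l(v)-l(p(v))$, the successor-path argument for the distance identity, and the down-step diagonal reconstruction for the inverse are exactly the ingredients of the usual proof. Your identification of the three-case analysis in (a) as the main technical point is accurate. One small gap worth flagging: in (d), asserting that the down-step diagonals form a tree by a ``local-surgery argument'' is vaguer than the rest of your outline; the cleanest argument uses the fact that the diagonals are $n$ edges on $n+1$ vertices and that no cycle can arise because each diagonal is oriented from the higher-labelled down-step corner to the lower one (or uses a direct unfolding of the successor structure). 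If you intend to write this out in full, that step deserves as much care as the face analysis in (a).
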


\section{An upper bound for the spectral gap of $\F^n$}\label{section: upper bound}

We will first show our upper bound for the spectral gap of $\F_n$, which will be achieved by evaluating the Dirichlet form for $\F_n$ in a function related to the radius of a quadrangulation. The same bound arises by considering many other natural functions relating to the metric structure of quadrangulations, constructed from graph distances, volumes of balls, lengths of separating cycles, etc.

Note that our proof will essentially rely on the fact that edge flips change distances by at most a constant and that the scaling limit of the radius of random quadrangulations is a known random variable (i.e.~the radius of the Brownian map). The same upper bound would thus extend to analogous edge flip chains for other classes of random planar maps which converge to the Brownian Map when rescaled by $n^{1/4}$; in particular, it implies the lower bound given by Budzinski in~\cite{B17} for the mixing time of random triangulations.

\begin{proposition}\label{upper bound}
	For the spectral gap $\nu_n$ of the Markov chain $\F^n$ of flips on quadrangulations of size $n$ we have
	$$\nu_n\leq Cn^{-\frac{5}{4}},$$
	where $C$ is some positive constant independent of $n$.
\end{proposition}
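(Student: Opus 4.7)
The plan is to exploit the variational characterisation of the spectral gap,
$$\nu_n \;\leq\; \frac{\mathcal{E}_{\F^n}(f,f)}{\V_\pi(f)}$$
for any non-constant $f: \sQ_n \to \mathbb{R}$ (with $\pi$ the uniform measure on $\sQ_n$), and to apply it to the test function $f(q) = \rad(q)$ -- the graph-theoretic radius of $q$, i.e.\ the maximum distance from the root vertex to any other vertex. The two ingredients to feed into this inequality are a lower bound on $\V_\pi(\rad)$ of order $n^{1/2}$, and an upper bound on $\mathcal{E}_{\F^n}(\rad,\rad)$ of order $n^{-3/4}$; their ratio delivers $n^{-5/4}$.

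For the variance, I would invoke the Brownian-map scaling limit of uniform rooted quadrangulations of size $n$~\cite{LG11,Mie11}: the rescaled radius $\rad(q)/n^{1/4}$ converges in distribution to the radius of the Brownian map from its root, which is a non-degenerate random variable. Uniform integrability, which follows from classical exponential moment bounds on the maximum of the label process on uniform labelled trees via the Schaeffer bijection, upgrades this to convergence of the first two moments, giving $\V_\pi(\rad) \geq c_1\, n^{1/2}$ for some $c_1>0$ and all $n$ large.

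For the Dirichlet form, the first observation is a Lipschitz-type property: a single flip $q \mapsto q^{e,s}$ modifies every pairwise graph distance by at most a universal constant $c_0$, since erasing $e$ and drawing the alternative diagonal $e'$ inside the freed hexagonal face allows any shortest path to be rerouted by at most three hexagon edges (and symmetrically). In particular $|\rad(q) - \rad(q^{e,s})| \leq c_0$, so
$$\mathcal{E}_{\F^n}(\rad,\rad) \;\leq\; \tfrac{c_0^{\,2}}{2}\cdot \Pr_{q,e,s}\!\left[\rad(q)\neq \rad(q^{e,s})\right].$$
It then suffices to show that the probability above is $O(n^{-3/4})$ under the stationary one-step law. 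Heuristically, $\rad$ can only change when the flipped edge is ``radius-critical'' -- it lies on (or adjacent to) a geodesic from the root to a vertex realising the radius. Such a geodesic has length $\rad(q) = \Theta(n^{1/4})$ with high probability, so at most $O(n^{1/4})$ of the $2n$ edges of $q$ are radius-critical, and averaging over the uniform edge choice yields the $O(n^{-3/4})$ bound. Combining the two estimates gives the claim.

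The main obstacle is the Dirichlet-form estimate: a clean count of radius-critical edges must uniformly control the upper tail of $\rad(q)$ and handle both ``geodesic-destroying'' flips (all shortest paths to a far vertex passed through $e$) and ``shortcut-creating'' flips (the new edge $e'$ brings a far vertex closer). A natural route -- one that matches the paper's general philosophy -- is to pass through the Schaeffer bijection of Section~\ref{section: Schaeffer}, under which $\rad$ corresponds (up to a bounded additive correction) to the maximum of the label process on the associated labelled tree; flips then become local rewirings of the tree, and known concentration properties of the label maximum on uniform plane trees provide the required bound on the fraction of edges that can affect the maximum label.
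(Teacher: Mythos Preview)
Your overall strategy matches the paper's exactly: same test function (the radius), same Lipschitz observation, same variance lower bound via convergence to the Brownian map. The gap is in your Dirichlet-form estimate.

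Your heuristic ``a geodesic to a farthest vertex has length $\Theta(n^{1/4})$, so at most $O(n^{1/4})$ of the $2n$ edges are radius-critical'' silently assumes there is essentially \emph{one} farthest vertex. In general there may be many vertices $v$ with $\dgr(\rho,v)\in\{\rad(q)-1,\rad(q)\}$, each with its own geodesic, and the number of radius-critical edges is then of order $(\text{number of such vertices})\times \rad(q)$. Without controlling that first factor you do not get $\E[X]=O(n^{1/4})$. Your ``shortcut-creating'' worry is actually a non-issue: by symmetry of the Dirichlet form it suffices to bound the number of edges whose flip \emph{increases} the radius, and for those the geodesic-counting argument is the right one.

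The paper closes the gap as follows. Write $S(q)=\{v:\dgr(\rho,v)\geq \rad(q)-1\}$ and fix a geodesic $P_v$ for each $v\in S(q)$. A flip of a non-root edge outside $\bigcup_{v\in S(q)}P_v$ cannot increase the radius (vertices in $S(q)$ keep their geodesics; vertices outside $S(q)$ had distance $\leq \rad(q)-2$ and can gain at most $2$). Hence $X(q)\leq |S(q)|\rad(q)+1$. The crucial step is then a distributional identity: under the Schaeffer bijection $\sQ_n\leftrightarrow \LT_n^+$, \emph{rerooting the tree at the leftmost corner carrying the maximum label} (and reflecting the labels) swaps ``vertices within $1$ of the maximum label'' with ``vertices within $1$ of the minimum label'', so that $|S(q)|$ has exactly the law of $|B_2(q)|-1$, which has bounded moments uniformly in $n$. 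Cauchy--Schwarz then gives $\E_\pi[X]\leq \big(\E_\pi[|B_2|^2]\,\E_\pi[\rad^2]\big)^{1/2}+1=O(n^{1/4})$, and the rest proceeds as you wrote. This rerooting identity is the one idea missing from your sketch.
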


\begin{proof}
Let $\rad:\sQ_n\to\mathbb{N}$ be the mapping sending a quadrangulation $q$ to its radius, that is the maximum possible distance of a vertex of $q$ to the origin. Consider the function $f_n:\sQ_n\to\mathbb{R}$ defined as
$$f_n(q)=\frac{\rad(q)}{n^{\frac14}}.$$

Notice that, given $q\in\sQ_n$, $e\in E(q)$, $v\in V(q)$ and $s\in\{+,-\}$, we have 
\begin{equation}\label{eq:distance change}\left|\dgr^q(v,\rho_q)-\dgr^{q^{e,s}}(v,\rho_{q^{e,s}})\right|\leq 3,\end{equation}
where $\dgr^q(v,\rho)$ is the distance of $v$ to the origin of $q$ and $\dgr^{q^{e,s}}(v,\rho_{q^{e,s}})$ is the distance to the origin of $q^{e,s}$ of the vertex that corresponds to $v$ via the natural identification induced by flipping the edge $e$. Indeed, removing $e$ can only increase the distance of $v$ to $\rho$ by at most 2, while reintroducing a rotated edge can only decrease it by at most 2; if $e$ is not the root edge of $q$, then $\rho$ is still the origin in $q^{e,s}$; otherwise, $\rho_{q^{e,s}}$ is a vertex adjacent to the previous origin $\rho$. As a consequence, we have $|\rad(q)-\rad(q^{e,s})|\leq 3$.

Let us now evaluate the Dirichlet form $\mathcal{E}_{\F^n}(f_n,f_n)$; we have
$$\mathcal{E}_{\F^n}(f_n,f_n)=\frac12\sum_{\substack{q\in\sQ_n\\e\in E(q)\\s\in\{+,-\}}}(f_n(q)-f_n(q^{e,s}))^2\frac{1}{6n|\sQ_n|}=
\sum_{\substack{q\in\sQ_n\\e\in E(q),\ s\in\{+,-\}\\f_n(q)<f_n(q^{e,s})}}(f_n(q)-f_n(q^{e,s}))^2\frac{1}{6n|\sQ_n|},$$
and therefore
$$\mathcal{E}_{\F^n}(f_n,f_n)\leq \frac{3}{2n^{\frac12+1}}\sum_{\substack{q\in\sQ_n\\e\in E(q),\ s\in\{+,-\}\\f_n(q)<f_n(q^{e,s})}}\frac{1}{|\sQ_n|}=\frac{3}{2n^{\frac54}}\E_\pi(n^{-\frac14}2X),$$
where $X:\sQ_n\to\mathbb{N}$ maps $q$ to the number of edges $e$ in $E(q)$ such that $\rad(q)<\rad(q^{e,s})$ for some $s\in\{+,-\}$, and $\pi$ is the uniform probability measure on $\sQ_n$.

We intend to show that $\E_\pi(n^{-\frac14}X)$ is bounded above by a constant independent of $n$.

Given $q\in\sQ_n$, consider the set $S(q)=(B_{\rad(q)-2})^c$ of all vertices $v$ of $q$ such that $\dgr(v,\rho)\geq\rad(q)-1$, where $\rho$ is the origin of $q$. Also, for each $v$ in $S(q)$, consider a simple path $P_v$ in $q$ with endpoints $\rho$ and $v$ and length $\dgr(\rho, v)$.

Flipping an edge $e$ that is not the root of $q$ and does not belong to $\bigcup_{v\in S(q)}P_v$ cannot increase the radius of the quadrangulation; in fact, since all paths $P_v$ and the origin are preserved, the distance to the root of vertices in $S(q)$ cannot increase, and the distance to the root of any vertex outside of $S(q)$ becomes at most $\rad(q)-2+2=\rad(q)$. We thus have, for all $q\in\sQ_n$, $X(q)\leq |S(q)|\rad(q)+1$.

Thanks to the Cauchy-Schwarz inequality, we can write
$$\mathcal{E}(f_n,f_n)\leq \frac{3}{n^{\frac54}}\left(\E_\pi(|S(q)|^2)\E_\pi(f_n(q)^2)\right)^{1/2}+\frac{3}{n^{\frac32}}\leq \frac{C'}{n^{\frac54}}\left(\E_\pi(|S(q)|^2)\E_\pi(f_n(q)^2)\right)^{1/2}.$$

We claim that $\E_\pi(|S(q)|^2)$ has a finite limit as $n\to\infty$; in fact, the random variable $|S(q)|$, where $q$ is distributed according to $\pi$, has exponential tails, hence the claim: we postpone the proof of this fact to Lemma~\ref{exp tails of S} at the end of this section. Furthermore, the random variable $f_n(q)$ (considered under $\pi$) converges weakly to the range of a Brownian snake driven by a Brownian excursion, whose variance is positive, and all of its moments converge (see \cite[Corollary 3]{CS04}); thus the right hand side of
$$n^{\frac54}{\nu_n}\leq C'\frac{\left(\E_\pi(|S(q)|^2)\E_\pi(f_n(q)^2)\right)^{1/2}}{\V_\pi[f_n(q)]}$$
is bounded by a constant independent of $n$, which proves the proposition.
\end{proof}

\begin{remark}\label{upper bound generality}
Note that the above proof essentially relies on the fact that each edge flip changes the radius of the quadrangulation by a constant, as in~\eqref{eq:distance change}, and that $n^{-\frac14}X$ converges to a non-trivial random variable (for which we needed both the convergence properties of the radius in the scaling limit and some kind of control over the quantity $S(q)$). The proof above then yields a lower bound of $n^{\frac54}$ on the mixing time of the edge flip chains on any $p$-angulations provided the above two properties hold.	
\end{remark}

\begin{lemma}\label{exp tails of S}Let $q$ be random quadrangulation distributed according to the uniform probability measure $\pi$ on $\sQ_n$ and let $\rho_q$ be the origin of $q$; define $\rad(q)=\max_{v\in V(q)}\dgr(\rho_q,v)$ and $S(q)=\{v\in V(q)\mid \dgr(v,\rho_q)\geq \rad(q)-1\}$.

The random variable $|S(q)|$ has exponential tails.
\end{lemma}

\begin{proof}
The statement about $|S(q)|$ follows from the fact that the random variable $|B_2(q)|=|\{v\in V(q)\mid \dgr(\rho_q,v)\leq 2\}|$ has exponential tails (see for example the proof of Proposition 9 in \cite{BCsubdiffusive}), combined with a rerooting argument.

Consider the labelled tree $t_q=\phi^{-1}(P(q))$, where $P(q)$ is the quadrangulation $q$, pointed in its origin $\rho$. The mapping $\phi^{-1}\circ P$ is a well-known variant of the Schaeffer construction, and is a bijection between the set $\sQ_n$ and the set $\LT_n^+\times\{1\}$, where $\LT_n^+$ is the set of all labelled trees with $n$ edges such that no negative labels appear on them and that the root vertex is labelled $0$ (in particular, $t_q$ is a uniform element of $\LT_n^+$). When $\rad(q)-1> 0$, the quantity $|S(q)|$ represents the number of vertices labelled $\rad(q)-1$ or $\rad(q)-2$ in $t_q$ ($\rad(q)-1$ being the maximum label appearing on vertices of $t_q$). When $\rad(q)=1$, all $n+1$ vertices of $t_q$ are labelled 0, and $|S(q)|=n+2$.

Given a tree $t\in\LT_n^+$, let $M(t)$ be the number of vertices of $t$ whose label is either maximal or maximal minus one (so that $M(t_q)=|S(q)|$ or $M(t_q)=|S(q)|-1$); let $B(t)$ be the number of vertices of $t$ labelled 0 or 1, which represent vertices of the corresponding quadrangulation having distance $1$ or $2$ from the origin (so that $B(t_q)=|B_2(q)|-1$).

\newcommand{\Reroot}{\operatorname{Reroot}}
Now consider the map $\Reroot$ from the set $\LT^+_n$ to itself defined as follows: given a tree $t\in\LT^+_n$, obtain $\Reroot(t)$ by rerooting it in the leftmost corner (according to the clockwise contour) bearing maximal label (equal to, say, $M=\max_vl(v)$), then relabel each vertex $v$ of $t$ with the label $M-l(v)$.   Note that, given a tree $t'\in\LT^+_n$, the number of trees $t\in \LT^+_n$ such that $\Reroot(t)=t'$ is equal to the number $Z(t')$ of corners bearing maximal label that one meets in a \emph{counterclockwise} contour after the initial root corner before meeting  a corner labelled 0. Moreover, $M(t)=B(\Reroot(t))$.

It follows that, for all real numbers $\theta>0$, we have
$$\E(e^{\theta M(t_q)})=\frac{1}{|\LT_n^+|}\sum_{t\in\LT^+_n}e^{\theta M(t)}=\frac{1}{|\LT_n^+|}\sum_{t'\in\LT^+_n}\sum_{t\in\LT^+_n:\Reroot(t)=t'}e^{\theta B(t')}=\frac{1}{|\LT_n^+|}\sum_{t'\in\LT^+_n}Z(t')\cdot e^{\theta B(t')}.$$

We now wish to bound $Z(t')$ in terms of $M(t')$. Note that a corner bearing maximal label $l_{max}$ corresponds to an oriented edge whose endpoints are both counted by $M(t')$ (either both are labelled $l_{max}$ or one is labelled $l_{max}$ and the other $l_{max}-1$). For each corner of maximal label, mark the endpoint of its corresponding oriented edge that is further from the origin of the tree. Each vertex labelled $l_{max}$ or $l_{max}-1$ gets marked zero, one or two times.  It follows that $Z(t')\leq 2M(t')$, and thus the expression above is upper bounded by
$$\frac{2}{|\LT_n^+|}\sum_{t'\in\LT^+_n}M(t')\cdot e^{\theta B(t')}\leq2\left(\E\left(e^{2\theta |B_2(q)|}\right)\E\left(M(t_q)^2\right)\right)^{\frac12},$$
where the last inequality holds by Cauchy-Schwarz.

We can apply a similar argument to $M(t_q)^2$ (instead of $e^{\theta M(t_q)}$) to obtain that
$$\E\left(M(t_q)^2\right)\leq 2\E\left(|B_2(q)|^2M(t_q)\right)\leq 2\left(\E\left(|B_2(q)|^4\right)\E\left(M(t_q)^2\right)\right)^{\frac12},$$
hence $\E\left(M(t_q)^2\right)\leq 4\E\left(|B_2(q)|^4\right)$.

It follows from the above that $\E\left(e^{\theta M(t_q)}\right)\leq 4\left(\E\left(e^{2\theta |B_2(q)|}\right)\E\left(|B_2(q)|^4\right)\right)^{\frac12}$; since $B_2(q)$ has exponential tails, the upper bound is finite for small enough $\theta$, hence $M(t_q)$, and also $|S(q)|$, which differs from $M(t_q)$ by at most 1, also have exponential tails.
\end{proof}

\section{A Markov chain on labelled trees}\label{section: Markov chain on trees}
\begin{figure}\centering
\begin{tikzpicture}
	\begin{pgfonlayer}{nodelayer}
		\node [style=vertex] (0) at (-0.5, -0) {};
		\node [style=vertex] (1) at (-0.5, 1) {};
		\node [style=vertex] (2) at (0, 2) {};
		\node [style=vertex] (3) at (-1, 2) {};
		\node [style=vertex] (4) at (-1.25, 1) {};
		\node [style=vertex] (5) at (-0.5, 3) {};
		\node [style=vertex, label=above:$v$, fill=red] (6) at (0.5, 3) {};
		\node [style=vertex] (7) at (0.25, 1) {};
		\node [style=vertex] (8) at (2.75, 3) {};
		\node [style=vertex] (9) at (3.25, 2) {};
		\node [style=vertex] (10) at (2.25, 2) {};
		\node [style=vertex, fill=black!20] (11) at (3.75, 3) {};
		\node [style=vertex] (12) at (3.5, 1) {};
		\node [style=vertex] (13) at (2.75, 1) {};
		\node [style=vertex] (14) at (2, 1) {};
		\node [style=vertex] (15) at (2.75, -0) {};
		\node [style=vertex, fill=red] (16) at (3.75, 1.5) {};
		\node [style=vertex] (17) at (-3.75, 3) {};
		\node [style=vertex] (18) at (-3.25, 2) {};
		\node [style=vertex] (19) at (-4.25, 2) {};
		\node [style=vertex, fill=black!20] (20) at (-2.75, 3) {};
		\node [style=vertex] (21) at (-3, 1) {};
		\node [style=vertex] (22) at (-3.75, 1) {};
		\node [style=vertex] (23) at (-3.75, -0) {};
		\node [style=vertex] (24) at (-4.5, 1) {};
		\node [style=vertex, fill=red] (25) at (-3.75, 4) {};
		\node (26) at (-3.75, -0.8) {$t^{v,\leftarrow}$};
		\node (27) at (-0.5, -0.8) {$t$};
		\node (28) at (2.75, -0.8) {$t^{v,\rightarrow}$};
	\end{pgfonlayer}
	\begin{pgfonlayer}{edgelayer}
	\draw [->, bend left, black!20] (11) to (16);
	\draw [->, bend right, black!20] (20) to (25);
		\draw [ultra thick,->] (0) to (4);
		\draw [very thick] (0) to (1);
		\draw [very thick](1) to (3);
		\draw [very thick](1) to (2);
		\draw [very thick](2) to (5);
		\draw [very thick, red] (2) to (6);
		\draw [very thick](0) to (7);
		\draw [ultra thick,->] (15) to (14);
		\draw [very thick](15) to (13);
		\draw [very thick](13) to (10);
		\draw [very thick](13) to (9);
		\draw [very thick](9) to (8);
		\draw [dashed, black!20, very thick] (9) to (11);
		\draw [very thick](15) to (12);
		\draw [very thick, red](13) to (16);
		\draw [ultra thick,->] (23) to (24);
		\draw [very thick](23) to (22);
		\draw [very thick](22) to (19);
		\draw [very thick](22) to (18);
		\draw [very thick](18) to (17);
		\draw [dashed, very thick, black!20] (18) to (20);
		\draw [very thick](23) to (21);
		\draw [very thick, red](17) to (25);
	\end{pgfonlayer}
\end{tikzpicture}
\caption{\label{fig:translations}Left and right leaf translation.}
\end{figure}
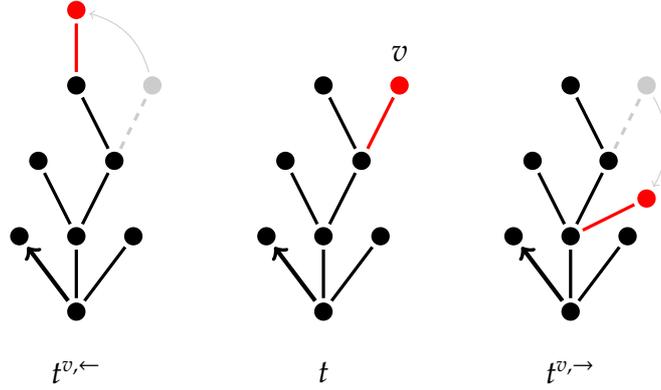

Our main results about the Markov chain $\F^n$ will be achieved via a comparison to a very natural Markov chain on labelled plane trees, which we will introduce presently.

Given a plane tree $t\in\T_n$ with contour $c_1,\ldots,c_{2n}$ and a leaf $v\in V(t)$, suppose the corner of $v$ is $c_l$ (i.e.~$c_l$ is the one corner adjacent to the vertex $v$), with $l<2n$; build a new tree $t^{v,\rightarrow}\in\T_n$ as follows: draw a new leaf $v'$ attached to the tree via the corner $c_{l+2}$ (the corner $c_1$ if $l=2n-1$); then erase $v$; if $(p(v),v)$ is the root edge of $t$ (i.e.~if $l=2$), then root $t^{v,\rightarrow}$ in $(p(p(v')),p(v'))$. Notice that, given a pair of trees $t,t'\in\T_n$, there is at most one leaf $v$ of $t$ such that $t'=t^{v,\rightarrow}$; this induces (when there is such a leaf) a natural identification between vertices of $t$ and vertices of $t'$ which sends $v$ to the ``shifted'' leaf $v'$ in $t'$ and is a tree isomorphism between the trees $\tau$ and $\tau'$ obtained from $t$ and $t'$ by erasing $v$ and $v'$. This is why, given two trees of the form $t,t^{v,x}$, we will automatically identify their vertices and denote them in the same way, including vertices $v$ and $v'$, thus taking ``vertex $v$ in $t^{v,x}$'' to mean the newly drawn leaf $v'$.

Given $t\in\T_n$, we can define analogously a tree $t^{v,\leftarrow}\in\T_n$ as the one tree such that $(t^{v,\leftarrow})^{v,\rightarrow}$ is $t$, if it exists. Additionally, we set $t^{v,\leftarrow}=t$ if $(v,p(v))$ is the root edge of $t$ (that is, in the one case where $t$ is not of the form $t'^{v,\rightarrow}$) and $t^{v\rightarrow}=t$ if the corner of $v$ is number $2n$ in the contour (so that now $t^{v,\rightarrow}$ is defined for all leaves $v$ of $t$).

When $t^{v,\rightarrow}\neq t$, we say that the tree $t^{v,\rightarrow}$ has been constructed from $t$ by \emph{translating the leaf $v$ to the right} (and $t^{v,\leftarrow}$ differs from $t$ by a \emph{leaf translation to the left}); notice that, given two trees $t,t'$ which differ by a leaf translation, there is a unique leaf $v$ of $t$ and a unique direction $d$, either $\rightarrow$ or $\leftarrow$, such that $t'$ can be expressed as $t^{v,d}$.

One could define a Markov chain $X$ on the set of plane trees with $n$ edges so that, given $X_k=t$, $X_{k+1}$ is determined by selecting an edge $(v,p(v))$ of $t$ uniformly at random and, if $v$ is a leaf of $t$, setting $X_{k+1}=t$ or $X_{k+1}=t^{v,\rightarrow}$ or $X_{k+1}=t^{v,\leftarrow}$ with equal probabilities, while $X_{k+1}=t$ if $v$ is not a leaf. We shall need a coloured variant of this chain, which can easily be defined on the set of plane trees with coloured edges $\TR_n$, where $C=\{1,\ldots,r\}$ is the set of possible edge colours. The trees $t^{v,\rightarrow}$ and $t^{v,\leftarrow}$ are defined from $t\in \TR_n$ exactly as before, by additionally ensuring that all edge colours are preserved. We can also introduce appropriate ``recolouring'' moves: given $x\in C$ and a leaf $v$ in $V(t)$, we set $t^{v,x}$ to be the tree $t$, where the edge $(v,p(v))$ is recoloured with colour $x$. One can now define $X_{k+1}$, given $X_k=t$, by selecting an edge $(v,p(v))$ of $t$ uniformly at random: if $v$ is a leaf, we set $X_{k+1}$ to be one of $t^{v,\rightarrow},t^{v,\leftarrow},t^{v,1},t^{v,2},\ldots,t^{v,r}$, each with probability $\frac{1}{r+2}$; if $v$ is not a leaf, $X_{k+1}=t$. In other words, given $t\neq t^{v,x}$, where $v$ is a leaf of $t$ and $x\in\{1,\ldots,r,\rightarrow,\leftarrow\}$, the Markov chain $X$ on $\TR_n$ has transition probability
$$p_X(t,t^{v,x})=\frac{1}{n(r+2)};$$
if $t,t'$ do not differ by a leaf translation or recolouring we have $p_X(t,t')=0$, and $p_X(t,t) \geq \frac{1}{r+2}$ since for each leaf $v$, if $c$ is the color of $v$ in $t$ we have that $t^{v,c}=t$.

Notice that the case $r=3$ corresponds to a Markov chain on the state space $\LT_n$ of labelled trees with $n$ vertices. We shall call this the \emph{leaf translation} Markov chain and will be estimating its spectral gap as well as comparing it to the spectral gap of $\F^{\bullet,n}$.

\begin{figure}[t]\centering
\begin{tikzpicture}
\node[style=vertex] (0) at (3.5,3) {};
\node[style=vertex] (1) at (3,3.75) {};
\node[style=vertex] (2) at (4,3.75) {};
\node[style=vertex] (3) at (3.5,4.5) {};
\node[style=vertex] (4) at (4.5,4.5) {};
\node[style=vertex, label=above:$v$,fill=red] (5) at (3,5.25) {};
\node[style=vertex] (6) at (4,5.25) {};
\node[style=vertex] (7) at (4,6) {};
\foreach \x in {0,...,14}
{
\node (x) at (\x*0.5,-0.25) {\scriptsize$\x$};
}
\draw[very thick] (1)--(0);
\draw[very thick] (3)--(2)--(0);
\draw[very thick] (7)--(6)--(3);
\draw[very thick] (4)--(2);
\draw[very thick,red] (5)--(3);
\draw[->] (0,0)--(7.5,0);	
\draw[->] (0,0)--(0,2.5);
\fill[red!10] (2,0)--(2,1)--(2.5,1.5)--(3,1)--(3,0)--cycle;
\draw[red,dashed] (2,0)--(2,1);
\draw[red,dashed] (3,1)--(3,0);
\draw[very thick] (0,0)--(.5,.5)--(1,0)--(1.5,0.5)--(2,1)--(2.5,1.5)--(3,1)--(3.5,1.5)--(4,2)--(4.5,1.5)--(5,1)--(5.5,.5)--(6,1)--(6.5,.5)--(7,0);
\draw[very thick, red] (2,1)--(2.5,1.5)--(3,1);
\fill[fill=red!10, rounded corners] (2,-1.55) rectangle (3,-0.5);
\node[] at (.25,-1) {\large$($};
\node[] at (.75,-1) {\large$)$};
\node[] at (1.25,-1) {\large$($};
\node[] at (1.75,-1) {\large$($};
\node[red] at (2.25,-1) {\large$($};
\node[red] at (2.75,-1) {\large$)$};
\node[] at (3.25,-1) {\large$($};
\node[] at (3.75,-1) {\large$($};
\node[] at (4.25,-1) {\large$)$};
\node[] at (4.75,-1) {\large$)$};
\node[] at (5.25,-1) {\large$)$};
\node[] at (5.75,-1) {\large$($};
\node[] at (6.25,-1) {\large$)$};
\node[] at (6.75,-1) {\large$)$};
\end{tikzpicture}
\begin{tikzpicture}
\node[style=vertex] (0) at (3.5,3) {};
\node[style=vertex] (1) at (3,3.75) {};
\node[style=vertex] (2) at (4,3.75) {};
\node[style=vertex] (3) at (3.5,4.5) {};
\node[style=vertex] (4) at (4.5,4.5) {};
\node[style=vertex, label=above:$v$,fill=red] (5) at (3.5,6) {};
\node[style=vertex] (6) at (4,5.25) {};
\node[style=vertex] (7) at (4.5,6) {};
\foreach \x in {0,...,14}
{
\node (x) at (\x*0.5,-0.25) {\scriptsize$\x$};
}
\draw[very thick] (1)--(0);
\draw[very thick] (3)--(2)--(0);
\draw[very thick] (7)--(6)--(3);
\draw[very thick] (4)--(2);
\draw[very thick,red] (5)--(6);
\draw[->] (0,0)--(7.5,0);	
\draw[->] (0,0)--(0,2.5);
\fill[red!10] (2.5,0)--(2.5,1.5)--(3,2)--(3.5,1.5)--(3.5,0)--cycle;
\draw[red,dashed] (2.5,0)--(2.5,1.5);
\draw[red,dashed] (3.5,1.5)--(3.5,0);
\draw[very thick] (0,0)--(.5,.5)--(1,0)--(1.5,0.5)--(2,1)--(2.5,1.5)--(3,2)--(3.5,1.5)--(4,2)--(4.5,1.5)--(5,1)--(5.5,.5)--(6,1)--(6.5,.5)--(7,0);
\fill[fill=red!10, rounded corners] (2.5,-1.55) rectangle (3.5,-0.5);
\draw[very thick, red] (2.5,1.5)--(3,2)--(3.5,1.5);
\node[] at (.25,-1) {\large$($};
\node[] at (.75,-1) {\large$)$};
\node[] at (1.25,-1) {\large$($};
\node[] at (1.75,-1) {\large$($};
\node[] at (2.25,-1) {\large$($};
\node[red] at (2.75,-1) {\large$($};
\node[red] at (3.25,-1) {\large$)$};
\node[] at (3.75,-1) {\large$($};
\node[] at (4.25,-1) {\large$)$};
\node[] at (4.75,-1) {\large$)$};
\node[] at (5.25,-1) {\large$)$};
\node[] at (5.75,-1) {\large$($};
\node[] at (6.25,-1) {\large$)$};
\node[] at (6.75,-1) {\large$)$};
\end{tikzpicture}
\caption{\label{fig:Dyck paths}\small Some of the natural correspondences between the set $\T_n$ of plane trees with $n$ edges and other Catalan structures result in interesting alternative interpretations of the leaf translation Markov chain. For example, given a Dyck path $D\colon \{0,\ldots,2n\}\to\mathbb{N}$ of length $2n$, say that $i\in\{1,\cdots,2n-1\}$ is an \emph{upward point} for $D$ if $D(i)=D(i-1)+1$ and say that $i$ is a \emph{peak} if it is a local maximum for $D$. Then, the leaf translation chain corresponds to selecting an upward point $i$ uniformly at random (there are exactly $n$ of them), and, if $i$ is a peak, then to either leaving it untouched or shifting it to the right (if possible) or shifting it to the left (if possible), each with probability 1/3. Shifting a peak $i$ of $D$ to the right, for example, can be done if $i\leq2n-2$ and consists in constructing a Dyck path $D'$ such that $D'(i)=D'(i+2)=D(i+2)$, $D'(i+1)=D(i+2)+1$ and $D'(j)=D(j)$ for all $j\in\{1,\ldots,2n\}\setminus\{i,i+1,i+2\}$; the above picture shows an example with $i=5$. An analogous interpretation for the leaf translations can be given on strings of balanced parentheses, where one selects an open parenthesis at random and, if it is immediately followed by a closed parenthesis, then the pair ``$()$'' may remain untouched or move one place to the right or to the left.}
\end{figure}

The leaf translation Markov chain on $\TR_n$ (including the simpler variant introduced at the beginning of this section for $r=1$) is a very natural chain, worthy in fact of study independently of our efforts with regards to $\F^n$. We have chosen to present it as a chain on $\TR_n$, but its transitions appear very natural for a number of different interpretations of the state space via classical bijections between Catalan structures (see Figure~\ref{fig:Dyck paths}). Indeed, variants of this chain have been discussed in the physics literature under the name of \emph{Fredkin spin models}, and have been investigated by Movassagh and Shor~\cite{MS16, Mov16}, relying on work by Bravyi et al.~\cite{Bravyi12}.

In particular, Movassagh and Shor prove a lower bound of $Cn^{-\frac{11}{2}}$ for the spectral gap of a chain on $\TR_n$ closely related to $X$, from which a bound for the spectral gap of $X$ can be gleaned; we shall partially follow their argument for estimating the spectral gap, but will improve their results and will therefore provide a complete proof of our lower bounds in the next section. In order to do this, we will now introduce a variant of the leaf translation Markov chain which is closer to the one originally considered by Movassagh and Shor in their proofs; even though bounding its spectral gap is not strictly speaking necessary for achieving our results for $X$ and therefore $\F^n$, we believe our improved bound to be of independent interest, and the proof -- which is somewhat simpler than the one for $X$ -- to provide a handy way to more naturally introduce some of the necessary notation and showcase the basic argument.

We shall introduce this chain on the set $\TR_n$ and refer to \cite{Mov16} for a presentation as a chain on the set of (coloured) Dyck paths. Given $t\in \TR_n$, a leaf $v\in V(t)$, an integer $k\in \{1,\ldots,2n-1\}$ and a colour $c\in\{1,\ldots,r\}$, we define $t^{v,k,c}\in \TR_n$ by the following procedure (Figure~\ref{fig:replanting}):
\begin{itemize}
\item erase $(v,p(v))$ from $t$, thus obtaining $t'\in\TR_{n-1}$;
\item consider the clockwise contour $c_1,\ldots,c_{2n-2}$ of $t'$. If $1<k<2n-1$, add a leaf $v'$ to $t'$	via its corner $c_k$; if $k=1$ or $k=2n-1$, add a leaf $v'$ to $t'$ via the root corner $c_1$: if $k=1$, let the new root corner be the one right before the added leaf, i.e.~reroot $t'$ so that the corner of $v'$ becomes the second corner of the contour; if $k=2n-1$, let the root corner be the one right after $v'$, so that the corner of $v'$ is the last one in the clockwise contour of the new tree;
\item colour the edge $(v',p(v'))$ with the colour $c$.\end{itemize}

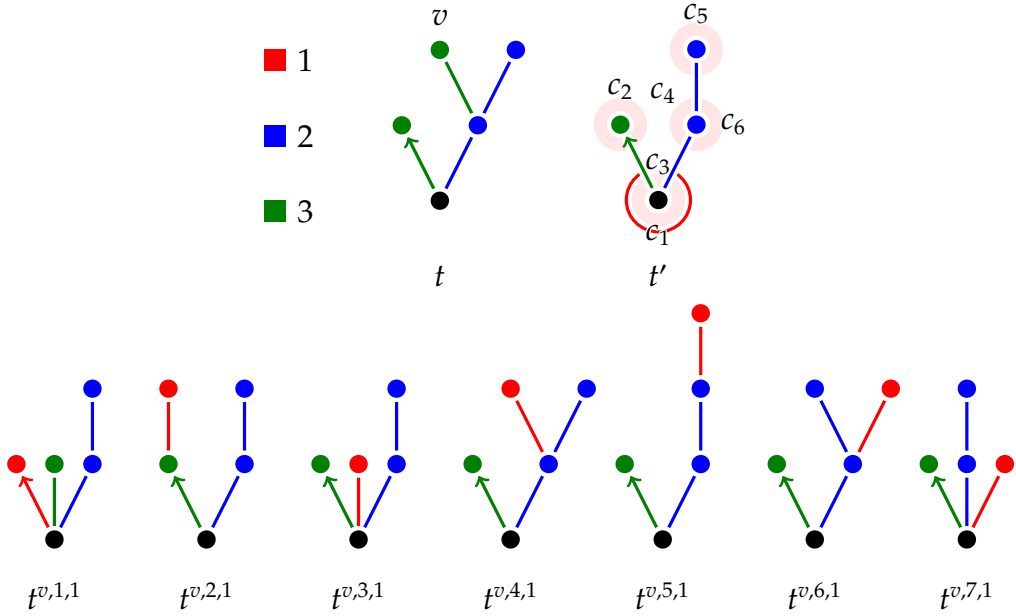
\begin{figure}
\centering
\begin{tikzpicture}
		\node [fill=red, label=right:1] (0) at (0, 1) {};
		\node [fill=blue, label=right:2] (1) at (0, 0) {};
		\node [fill=green!50!black, label=right:3] (1) at (0, -1) {};
		\node (x) at (0, -2) {};
\end{tikzpicture}\qquad
\begin{tikzpicture}
	\begin{pgfonlayer}{nodelayer}
		\node [style=vertex] (0) at (0.5, -0) {};
		\node [style=vertex, fill=blue] (1) at (1.5, 2) {};
		\node [style=vertex, fill=green!50!black] (2) at (0, 1) {};
		\node [style=vertex, fill=blue] (3) at (1, 1) {};
		\node [style=vertex, fill=green!50!black, label=above:$v$] (4) at (0.5, 2) {};
		\node (x) at (0.5, -1) {$t$};
	\end{pgfonlayer}
	\begin{pgfonlayer}{edgelayer}
		\draw [very thick, <-, green!50!black] (2) to (0);
		\draw [very thick, blue] (0) to (3);
		\draw [very thick, blue] (3) to (1);
		\draw [very thick, green!50!black] (4) to (3);
	\end{pgfonlayer}
\end{tikzpicture}\qquad
\begin{tikzpicture}
	\begin{pgfonlayer}{nodelayer}
		\node [style=vertex, label=below:\contour{white}{$c_1$}, label=above:\contour{white}{$c_3$}] (0) at (0.5, -0) {};
		\node [style=vertex, label=above:\contour{white}{$c_5$}, fill=blue] (1) at (1, 2) {};
		\node [style=vertex, label=above:\contour{white}{$c_2$}, fill=green!50!black] (2) at (0, 1) {};
		\node [style=vertex, label=above left:\contour{white}{$c_4$}, label=right:\contour{white}{$c_6$}, fill=blue] (3) at (1, 1) {};
				\node (x) at (0.5, -1) {$t'$};
	\end{pgfonlayer}
	\begin{pgfonlayer}{edgelayer}
		\draw[red, very thick] (0) circle (12pt);
	\fill[white] (2.center)--(0.center)--(3.center)--cycle;
	\fill[red!10] (0) circle (10pt);
	\fill[red!10] (1) circle (10pt);
	\fill[red!10] (2) circle (10pt);
	\fill[red!10] (3) circle (10pt);
		\draw[b] (2) to (0);
		\draw[b] (0) to (3);
		\draw[b] (3) to (1);
		\draw[very thick,<-,green!50!black] (2) to (0);
		\draw[very thick,blue] (0) to (3);
		\draw[very thick,blue] (3) to (1);
	\end{pgfonlayer}
\end{tikzpicture}\\
\begin{tikzpicture}
	\begin{pgfonlayer}{nodelayer}
		\node [style=vertex,fill=blue] (0) at (-5.25, -0) {};
		\node [style=vertex,fill=blue] (1) at (-5.25, 1) {};
		\node [style=vertex,fill=green!50!black] (2) at (-5.75, -0) {};
		\node [style=vertex] (3) at (-5.75, -1) {};
		\node [style=vertex, fill=red] (4) at (-6.25, -0) {};
		\node [style=vertex,fill=blue] (5) at (-3.25, -0) {};
		\node [style=vertex,fill=blue] (6) at (-3.25, 1) {};
		\node [style=vertex,fill=green!50!black] (7) at (-4.25, -0) {};
		\node [style=vertex] (8) at (-3.75, -1) {};
		\node [style=vertex, fill=red] (9) at (-4.25, 1) {};
		\node [style=vertex,fill=blue] (10) at (-1.25, -0) {};
		\node [style=vertex,fill=blue] (11) at (-1.25, 1) {};
		\node [style=vertex,fill=green!50!black] (12) at (-2.25, -0) {};
		\node [style=vertex] (13) at (-1.75, -1) {};
		\node [style=vertex, fill=red] (14) at (-1.75, -0) {};
		\node [style=vertex,fill=blue] (15) at (0.75, -0) {};
		\node [style=vertex,fill=blue] (16) at (1.25, 1) {};
		\node [style=vertex,fill=green!50!black] (17) at (-0.25, -0) {};
		\node [style=vertex] (18) at (0.25, -1) {};
		\node [style=vertex, fill=red] (19) at (0.25, 1) {};
		\node [style=vertex,fill=blue] (20) at (4.75, -0) {};
		\node [style=vertex,fill=blue] (21) at (4.25, 1) {};
		\node [style=vertex,fill=green!50!black] (22) at (3.75, -0) {};
		\node [style=vertex] (23) at (4.25, -1) {};
		\node [style=vertex, fill=red] (24) at (5.25, 1) {};
		\node [style=vertex,fill=blue] (25) at (6.25, -0) {};
		\node [style=vertex,fill=blue] (26) at (6.25, 1) {};
		\node [style=vertex,fill=green!50!black] (27) at (5.75, -0) {};
		\node [style=vertex] (28) at (6.25, -1) {};
		\node [style=vertex, fill=red] (29) at (6.75, -0) {};
		\node [style=vertex,fill=blue] (30) at (2.75, -0) {};
		\node [style=vertex,fill=blue] (31) at (2.75, 1) {};
		\node [style=vertex,fill=green!50!black] (32) at (1.75, -0) {};
		\node [style=vertex] (33) at (2.25, -1) {};
		\node [style=vertex, fill=red] (34) at (2.75, 2) {};]
		\node (40) at (-5.75, -1.75) {$t^{v,1,1}$};
		\node (41) at (-3.75, -1.75) {$t^{v,2,1}$};
		\node (42) at (-1.75, -1.75) {$t^{v,3,1}$};
		\node (43) at (0.25, -1.75) {$t^{v,4,1}$};
		\node (44) at (2.25, -1.75) {$t^{v,5,1}$};
		\node (45) at (4.25, -1.75) {$t^{v,6,1}$};
		\node (46) at (6.25, -1.75) {$t^{v,7,1}$};
	\end{pgfonlayer}
	\begin{pgfonlayer}{edgelayer}
		\draw [very thick,green!50!black] (2) to (3);
		\draw [very thick,blue] (3) to (0);
		\draw [very thick,blue] (0) to (1);
		\draw [very thick, red, ->](3) to (4);
		\draw [very thick, <-,green!50!black](7) to (8);
		\draw [very thick,blue](8) to (5);
		\draw [very thick,blue](5) to (6);
		\draw [very thick, red](7) to (9);
		\draw [very thick, <-,green!50!black](12) to (13);
		\draw [very thick,blue](13) to (10);
		\draw [very thick,blue](10) to (11);
		\draw [very thick, red](13) to (14);
		\draw [very thick, <-,green!50!black](17) to (18);
		\draw [very thick,blue](18) to (15);
		\draw [very thick,blue](15) to (16);
		\draw [very thick, red](15) to (19);
		\draw [very thick, <-,green!50!black](22) to (23);
		\draw [very thick,blue](23) to (20);
		\draw [very thick,blue](20) to (21);
		\draw [very thick, red](20) to (24);
		\draw [very thick, <-,green!50!black](27) to (28);
		\draw [very thick,blue](28) to (25);
		\draw [very thick,blue](25) to (26);
		\draw [very thick, red](28) to (29);
		\draw [very thick, <-,green!50!black](32) to (33);
		\draw [very thick,blue](33) to (30);
		\draw [very thick,blue](30) to (31);
		\draw [very thick, red](31) to (34);
	\end{pgfonlayer}
\end{tikzpicture}
\caption{\label{fig:replanting}The leaf replanting move of a leaf $v$ performed on a tree $t\in\T^{(3)}_4$: above, the tree $t'	\in\T^{(3)}_3$ and its contour; below, the trees $t^{v,k,1}$ for $k=1,\ldots,7$.}
\end{figure}

Notice that, if the corner of $v$ is the $k$-th corner in the clockwise contour of $t$ with $1<k\leq 2n-1$ and $c$ is the colour of $(v,p(v))$ in $t$, then $t^{v,\rightarrow}=t^{v,k,c}$. Similarly, if $k>2$, we have $t^{v,{k-2},c}=t^{v,\leftarrow}$, and if $1 < k \leq 2n$ we have $t^{v,k-1,c}=t$.

We define the \emph{leaf replanting} Markov chain $Y$ on the state space $\TR_n$ by choosing, if $Y_k=t$, a uniformly random edge $(v,p(v))$ of $t$; if $v$ is not a leaf, then we set $Y_{k+1}=t$; if $v$ is a leaf, we set $Y_{k+1}$ to be $t^{v,k,c}$, where $k$ and $c$ are chosen independently and uniformly at random in $\{1,\ldots,2n-1\}$ and $\{1,\ldots,r\}$ respectively.

While we were able to identify the leaf being moved between two trees that differ by a leaf translation, notice that this is not the case when we're dealing with a leaf replanting. In general, we have
$$p_Y(t,t')=\sum_{v,k,c}\frac{1}{n(2n-1)r}1_{t'=t^{v,k,c}}.$$

\begin{remark}\label{basic tree chains}
Notice that both $X$ and $Y$ are reversible, irreducible and aperiodic. Reversibility and aperiodicity are clear from the definition; irreducibility is also clear: given any tree in $\TR_n$, one can turn it into the tree of height 1 whose edges are all coloured 1 with at most $n$ transitions from $Y$ (indeed, it suffices to apply the replanting $\cdot\mapsto \cdot^{v,1,1}$ on the rightmost leaf $v$ of the tree several times). Since each leaf replanting can actually be obtained by concatenating at most $2n-1$ transitions from $X$ ($2n-2$ translations and one recolouring), the same height one tree can be obtained with at most $n(2n-1)$ transitions from $X$. The two Markov chains $X$ and $Y$ therefore both admit the uniform measure on $\TR_n$ as their unique stationary distribution.
\end{remark}

\subsection{A lower bound for the spectral gaps of leaf replanting and leaf translation Markov chains}

In order to prove the desired lower bounds, we first need to set up some machinery; the first part of this section will be devoted to constructing a family of probability measures on sequences of transitions for the Markov chain $\CLR$. This will be done via a family of functions $f_n:\TR_n\times\TR_{n-1}\to \mathbb{R}$ and a function $F:(\TR_n)^2\to\TR_{n-1}$ with some specific properties, which we now state. Throughout this section, we will make extensive use of the fact that plane trees are counted by Catalan numbers, hence
\begin{equation}\label{catalan}\left|\TR_n\right|=\frac{r^n}{n+1}{2n \choose n}\sim \frac{(4r)^n}{\sqrt{\pi}n^{\frac32}}\end{equation}
and in particular $|\TR_{n+1}|< 4 r\cdot|\TR_n|$.

\begin{figure}\centering
\begin{tikzpicture}
	\begin{pgfonlayer}{nodelayer}
		\node [style=vertex, fill=blue] (0) at (0.5, -0) {};
		\node [style=vertex, fill=red] (1) at (-0.75, 1) {};
		\node [style=vertex, fill=red] (2) at (-1.25, 2) {};
		\node [style=vertex, fill=red] (3) at (-0.25, 2) {};
		\node [style=vertex, fill=red] (4) at (-1.25, 3) {};
		\node [style=vertex, fill=red] (5) at (-1.75, 4) {};
		\node [style=vertex, fill=red] (6) at (-0.75, 4) {};
		\node [style=vertex, fill=blue] (7) at (0.5, 1) {};
		\node [style=vertex, fill=blue] (8) at (1.5, 1) {};
		\node [style=vertex, fill=blue] (9) at (1.5, 2) {};
		\node [style=vertex, fill=blue] (10) at (1, 3) {};
		\node [style=vertex, fill=blue] (11) at (2, 3) {};
		\node [style=vertex, fill=blue] (24) at (0.5, 2) {};
		\node [style=vertex, fill=blue] (25) at (2, 4) {};
	\end{pgfonlayer}
	\begin{pgfonlayer}{edgelayer}
	\node[fit=(1)(2)(3)(4)(5)(6), fill=red!10, rounded corners,label=below left:$L(t)$] {};
	\node[fit=(0)(7)(8)(9)(10)(11)(25), rounded corners, fill=blue!10,label=above right:$R(t)$] {};
		\draw[ultra thick,->] (0) to (1);
		\draw[red,thick,->] (1) to (2);
		\draw[red,thick] (1) to (3);
		\draw[red,thick] (2) to (4);
		\draw[red,thick] (4) to (5);
		\draw[red,thick] (4) to (6);
		\draw[blue,thick,->] (0) to (7);
		\draw[blue,thick] (0) to (8);
		\draw[blue, thick] (8) to (9);
		\draw[blue, thick] (10) to (9);
		\draw[blue, thick] (9) to (11);
		\draw[blue, thick] (7) to (24);
		\draw[blue, thick] (11) to (25);
	\end{pgfonlayer}
\end{tikzpicture}
\caption{\label{fig:L,R}The decomposition of a tree $t\in\T_{13}$ into its left and right components $L(t)\in\T_5$ and $R(t)\in\T_7$.}
\end{figure}
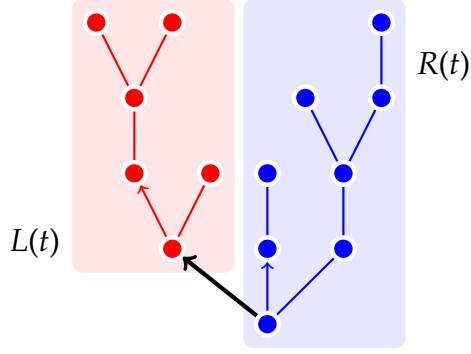

\begin{proposition}\label{hierarchy}For $n\geq 1$, there exists a mapping $f_n:\TR_n\times\TR_{n-1}\to\mathbb{R}$ such that
\begin{itemize}
	\item[(i)] $f_n(t,t')=0$ if $t'$ cannot be obtained from $t$ by deleting a leaf;
	\item[(ii)] $\displaystyle\sum_{t'\in\TR_{n-1}}f_n(t,t')=1$ for all $t$ in $\TR_n$;
	\item[(iii)] $\displaystyle\sum_{t\in\TR_n}f_n(t,t')=\frac{|\TR_n|}{|\TR_{n-1}|}$ for all $t'$ in $\TR_{n-1}$.
\end{itemize}
\end{proposition}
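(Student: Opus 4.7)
My approach is to recast the existence of $f_n$ as the existence of a fractional bipartite matching with prescribed marginals and to deduce it from a Hall-type (max-flow min-cut) expansion inequality.

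First, introduce the bipartite graph $G$ whose vertex parts are $\TR_n$ and $\TR_{n-1}$, with an edge $\{t,t'\}$ whenever $t'$ can be obtained from $t$ by deletion of a single leaf. Conditions (i)--(iii) state precisely that $f_n$ is a non-negative weighting of the edges of $G$ with row sum $1$ for each $t\in\TR_n$ and column sum $|\TR_n|/|\TR_{n-1}|$ for each $t'\in\TR_{n-1}$ (the two totals agree and equal $|\TR_n|$). Equivalently, $f_n$ is the edge-marginal of a coupling between the uniform distributions on $\TR_n$ and $\TR_{n-1}$ supported on leaf-deletion transitions.

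By the fractional version of Hall's theorem, or equivalently max-flow min-cut applied to the natural transportation network (source with capacity-$1$ edges to $\TR_n$, infinite-capacity edges along $G$, and capacity-$|\TR_n|/|\TR_{n-1}|$ edges from $\TR_{n-1}$ to the sink), such a weighting exists if and only if
\[
|N_G(B)| \;\ge\; |B|\cdot \frac{|\TR_n|}{|\TR_{n-1}|} \qquad \text{for every } B\subseteq\TR_{n-1},
\]
where $N_G(B)$ is the set of trees in $\TR_n$ admitting some $t'\in B$ as a leaf-deletion. To verify this, I would exploit two facts. On the one hand, the Catalan ratio \eqref{catalan} gives $|\TR_n|/|\TR_{n-1}| = r\cdot 2(2n-1)/(n+1) \le 4r$, so only a bounded expansion is required. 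On the other hand, each $t'\in\TR_{n-1}$ produces $r(2n-1)$ labelled leaf insertions (choice of corner in the clockwise contour of $t'$, together with a choice of colour) which, modulo collapses coming from local symmetries of $t'$, translate into many distinct parents in $\TR_n$. A double-counting argument on the edges of $G$ between $N_G(B)$ and $B$ — counted both from the $\TR_n$ side via leaves and from the $\TR_{n-1}$ side via insertion slots — should then yield the required inequality.

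The main obstacle I expect is the uniform verification of the Hall expansion across all $B\subseteq\TR_{n-1}$. Highly symmetric $t'$ (stars, caterpillars, and their colourings) have many fewer distinct parents than a generic tree because many corner insertions collapse to the same plane tree; in principle, a carefully chosen $B$ of such trees could threaten the bound. The bulk of the work is arguing that, although insertions at different corners of a single $t'$ often collide, insertions originating from different $t'\in B$ almost never lead to the same $t$, so the union $N_G(B)$ remains sufficiently large — probably by tracking the contour position of the newly inserted leaf and combining this with the multiplicative factor $r$ coming from the colouring. At the extreme $B=\TR_{n-1}$ the inequality is tight (both sides equal $|\TR_n|$), which indicates the verification cannot be too lossy.
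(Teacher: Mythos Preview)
Your reduction to a fractional matching is correct: the (implicitly non-negative) $f_n$ you seek is exactly a feasible flow in the transportation network you describe, and max-flow min-cut does reduce the question to the expansion inequality $|N_G(B)|\geq |B|\cdot|\TR_n|/|\TR_{n-1}|$ for all $B\subseteq\TR_{n-1}$. However, you have not actually verified this inequality, and this is the whole content of the proposition. Your own write-up flags it as ``the main obstacle'' and ``the bulk of the work'', and the sketch you give is not close to a proof. Two concrete points:
\begin{itemize}
\item The naive double-count fails. Counting pairs $(t,v)$ with $v$ a leaf of $t\in\TR_n$, the fibres over $\TR_{n-1}$ have constant size $r(2n-1)$ (rooted plane trees have no nontrivial automorphisms, so your worry about ``symmetric $t'$ having few parents'' is not the issue). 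The fibres over $\TR_n$, however, have size equal to the number of leaves of $t$, which can be as large as $n$; this gives only $|N_G(B)|\geq |B|\,r(2n-1)/n$, whereas you need $|B|\cdot 2r(2n-1)/(n+1)$. You are short by a factor of essentially $2$, and closing that gap uniformly over all $B$ requires a genuine argument you have not supplied.
\item Your heuristic that ``insertions originating from different $t'\in B$ almost never lead to the same $t$'' is exactly what has to be quantified, and it fails in the obvious extremal cases (e.g.\ stars), so the argument cannot be purely local.
\end{itemize}

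The paper bypasses all of this with a direct recursive construction. Using the standard decomposition $t\mapsto (L(t),R(t),c)$ of a rooted plane tree into the subtree above the root edge, the remainder, and the root-edge colour, one sets
\[
f_n(t,t')=\begin{cases}
C^{(n)}_{|L(t)|}\,f_{|L(t)|}(L(t),L(t')) & \text{if the deleted leaf lies in }L(t),\\
C^{(n)}_{|R(t)|}\,f_{|R(t)|}(R(t),R(t')) & \text{if it lies in }R(t),
\end{cases}
\]
and then solves for the constants. With $C^{(n)}_i=\dfrac{i(i+1)(3n-2i-1)}{(n-1)n(n+1)}$ one checks directly that $C^{(n)}_i+C^{(n)}_{n-1-i}=1$ (giving (ii)) and that $C^{(n)}_i\cdot|\TR_i|/|\TR_{i-1}|+C^{(n)}_{n-i}\cdot|\TR_{n-i}|/|\TR_{n-i-1}|=|\TR_n|/|\TR_{n-1}|$ (giving (iii)), using only the Catalan ratio. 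This is explicit, manifestly non-negative, and involves no expansion estimate at all. If you want to salvage your approach you would need an honest proof of the Hall inequality; otherwise, the recursive construction is both shorter and complete.
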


\begin{proof}
	We can recursively construct a mapping $f_n$ with the required properties. 
	
	Indeed, $\TR_1\times\TR_0=\{P_1^{(1)},\ldots,P_1^{(r)}\}\times\{\bullet\}$, where $P_1^{(i)}$ is the tree with one edge which is coloured $i$ and $\bullet$ is the single vertex, and we can set $f_1(P_1^{(i)},\bullet)=1$.
	
	Notice that, for $n>1$, we can define two functions $L,R:\TR_n\to\bigcup_{k=0}^{n-1}\TR_k$ (see Figure~\ref{fig:L,R}) by setting $L(t)$ to be the tree of descendants of $u$ in $t$, where $(\rho,u)$ is the root edge of $t$ (including $u$ and with the natural rooting induced by that of $t$, unless $u$ is a leaf, in which case $L(t)=\bullet$) and $R(t)$ to be the tree obtained from $t$ by erasing $L(t)$ (and the edge $(\rho,u)$), rooted in the corner that contains the original root corner of $t$ (unless $\rho$ has degree 1, in which case $R(t)=\bullet$). We have $0\leq|L(t)|\leq n-1$ and $|L(t)|+|R(t)|=n-1$, and $t \mapsto (L(t),R(t),c)$, where $c\in\{1,\ldots,r\}$ is the colour of the root edge in $t$, is a bijection between $\TR_n$ and $\bigcup_{k=0}^{n-1}(\TR_k\times\TR_{n-1-k})\times\{1,\ldots,r\}$.
	
	We will now construct $f_n:\TR_n\times\TR_{n-1}$, for $n>1$, from the mappings $f_1,\ldots,f_{n-1}$, by using the projections $L,R$ and $n$ constants $C^{(n)}_0,\ldots,C^{(n)}_{n-1}$ which will be explicitly worked out below. Consider $(t,t')\in\TR_n\times\TR_{n-1}$. If $|L(t)|>0$ and $t'$ can be obtained from $t$ by deleting a leaf contained in $L(t)$, set $f_n(t,t')=C^{(n)}_{|L(t)|}f_{|L(t)|}(L(t),L(t'))$; if $t'$ can be obtained from $t$ by deleting a leaf contained in $R(t)$, set $f_n(t,t')=C^{(n)}_{|R(t)|}f_{|R(t)|}(R(t),R(t'))$. Notice that the two conditions are mutually exclusive; if neither is satisfied, set $f_n(t,t')=0$.
	
	We now have, for all $t$ in $\TR_n$ such that $|L(t)|=k$ (where $1\leq k\leq n-2$),
	$$\sum_{t'\in\TR_{n-1}}f_n(t,t')=C^{(n)}_{k}\sum_{t_l\in\TR_{k-1}}f_{k}(L(t),t_l)+C^{(n)}_{n-k-1}\sum_{t_r\in\TR_{n-k-2}}f_{n-k-1}(R(t),t_r)=C^{(n)}_{k}+C^{(n)}_{n-k-1},$$
	as well as $\sum_{t'\in\TR_{n-1}}f_n(t,t')=C^{(n)}_{n-1}$ if $|L(t)|=0$ or $|L(t)|=n-1$.
	
	Furthermore, for all $t'\in\TR_{n-1}$ such that $|L(t')|=k$ (where $0\leq k\leq n-2$),
	$$\sum_{t\in\TR_n}f_n(t,t')=C^{(n)}_{k+1}\sum_{t_l\in\TR_{k+1}}f_{k+1}(t_l,L(t'))+C^{(n)}_{n-k-1}\sum_{t_r\in\TR_{n-k-1}}f_{n-k-1}(t_r,R(t'))=C^{(n)}_{k+1}\frac{|\TR_{k+1}|}{|\TR_k|}+C^{(n)}_{n-k-1}\frac{|\TR_{n-k-1}|}{|\TR_{n-k-2}|}.
	$$
	
	Notice that $f_n$ has property (i) by construction; to enforce properties (ii) and (iii), it is sufficient to choose
	$$C^{(n)}_i=\frac{i(i+1)(3n-2i-1)}{(n-1)n(n+1)}$$
	for $i=0,\ldots,n-1$, since one then has $C^{(n)}_0=0$ and $C^{(n)}_i+C^{(n)}_{n-i-1}=1$ for $0\leq i\leq n-1$, as well as 
	$$C^{(n)}_i\frac{|\TR_{i}|}{|\TR_{i-1}|}+C^{(n)}_{n-i}\frac{|\TR_{n-i}|}{|\TR_{n-i-1}|}=C^{(n)}_i\frac{r|2(2i-1)|}{i+1}+C^{(n)}_{n-i}\frac{2r(2n-2i-1)}{n-i+1}=\frac{2r(2n-1)}{n+1}=\frac{|\TR_{n}|}{|\TR_{n-1}|}$$
	for all $1\leq i\leq n-1$, by~\eqref{catalan}.
\end{proof}

\begin{lemma}\label{small fibers}
There is a mapping $F:(\TR_n)^2\to\TR_{n-1}$ such that for all $t\in\TR_n$ and $\tau\in\TR_{n-1}$ we have $|\{t'|F(t,t')=\tau\}|\leq 8r$ and $|\{t'|F(t',t)=\tau\}|\leq 8r$.
\end{lemma}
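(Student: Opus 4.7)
The plan is to give a short, ``combinatorially blind'' construction via modular arithmetic on ranks: the lemma imposes no structural requirement on $F$ beyond its fiber sizes, so the problem reduces entirely to the volume inequality $|\TR_n|\leq 8r|\TR_{n-1}|$.

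First I would compute from \eqref{catalan}:
$$\frac{|\TR_n|}{|\TR_{n-1}|}=r\cdot\frac{C_n}{C_{n-1}}=\frac{2r(2n-1)}{n+1}<4r\leq 8r\quad\text{for all }n\geq 1.$$
Next, fix arbitrary linear orderings (for instance, lexicographic on the associated colored Dyck paths) and let $\rho_n\colon\TR_n\to\{0,1,\dots,|\TR_n|-1\}$ and $\rho_{n-1}\colon\TR_{n-1}\to\{0,1,\dots,|\TR_{n-1}|-1\}$ denote the resulting rank bijections. Define
$$F(t,t')=\rho_{n-1}^{-1}\bigl((\rho_n(t)+\rho_n(t'))\bmod|\TR_{n-1}|\bigr).$$

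To verify the fiber bound, fix $t\in\TR_n$ and $\tau\in\TR_{n-1}$: the equation $F(t,t')=\tau$ is equivalent to
$$\rho_n(t')\equiv\rho_{n-1}(\tau)-\rho_n(t)\pmod{|\TR_{n-1}|},$$
and the number of integers in $\{0,\dots,|\TR_n|-1\}$ lying in any single residue class modulo $|\TR_{n-1}|$ is at most $\lceil|\TR_n|/|\TR_{n-1}|\rceil\leq 4r\leq 8r$. Since $F$ is symmetric in its two arguments, the companion bound $|\{t':F(t',t)=\tau\}|\leq 8r$ is literally the same statement.

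The only ``obstacle'' is recognizing that the lemma does not ask for any combinatorial relation between $F(t,t')$ and $t,t'$ (such as being obtained via leaf deletion), so one need not pursue a much more laborious construction tied to the recursive $(L,R)$-decomposition underlying Proposition~\ref{hierarchy}; the entire content is carried by the numerical inequality $|\TR_n|/|\TR_{n-1}|<4r$ from \eqref{catalan}.
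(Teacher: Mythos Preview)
Your proof is correct and essentially identical to the paper's: both enumerate $\TR_n$ and $\TR_{n-1}$ and set $F$ to be the sum of ranks reduced modulo $|\TR_{n-1}|$, then bound fiber sizes via $|\TR_n|<4r|\TR_{n-1}|$. The only cosmetic difference is that you index from $0$ and solve directly for $\rho_n(t')$ in a residue class (yielding the tighter bound $\lceil|\TR_n|/|\TR_{n-1}|\rceil\leq 4r$), whereas the paper indexes from $1$ and bounds the number of possible values of the sum $a+b\in\{2,\dots,2|\TR_n|\}$, picking up an extra factor of $2$ and arriving at $8r$.
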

\begin{proof}
	Enumerate the elements of $\TR_n$ as $t_1,\ldots,t_{|\TR_n|}$ and the elements of $\TR_{n-1}$ as $\tau_1,\ldots,\tau_{|\TR_{n-1}|}$, in some order. The function that sends $(t_a,t_b)$ to $\tau_S$, where $S=(a+b)\mod |\TR_{n-1}|$, satisfies the requirements. Indeed, we have $|\TR_n|=2r\frac{2n-1}{n+1}|\TR_{n-1}|<4r|\TR_{n-1}|$ and therefore $a+b\leq 2|\TR_n|<8r|\TR_{n-1}|$. Given $\tau=\tau_S$ and $t=t_a$, there are then at most $8r$ possibilities for $a+b$ and therefore at most $8r$ possibilities for $b$. 
\end{proof}

We intend to prove lower bounds for the spectral gap of the leaf replanting Markov chain $\CLR$ by assigning each pair of trees $x,y\in\TR_n$ a canonical path of leaf replanting moves turning $x$ into $y$ -- or rather a probability measure on the set of possible paths from $x$ to $y$. Such a probability measure will be constructed by using a set of functions $f_i:\TR_i\times\TR_{i-1}\to\mathbb{R}$ with the requirements of Proposition~\ref{hierarchy} and a function $F:(\TR_n)^2\to\TR_{n-1}$ as in Lemma~\ref{small fibers}.

Let $L,R:\TR_n\to\bigcup_{i=0}^{n-1}\TR_i$ be the mappings defined within the proof of Proposition~\ref{hierarchy} and depicted in Figure~\ref{fig:L,R}.

First, given a tree $x\in\TR_n$ and a tree $x'\in\TR_n$ such that $L(x')\in\TR_{n-1}$, we will define a probability measure on paths from $x$ to $x'$. Then, given generic trees $x,y\in\TR_n$, we will construct random paths from $x$ to $y$ by concatenating paths from $x$ to $z\in\TR_n$, where $L(z)=F(x,y)\in\TR_{n-1}$, and from $z$ to $y$.

Given $t\in\TR_n$, consider the set $\Gamma^t$ of sequences $t_0,\ldots,t_n$ such that $t_0=t$, $t_i\in \TR_{n-i}$ and $t_{i+1}$ is obtained from $t_i$ by erasing a leaf; define the probability measure $Q^t$ on $\Gamma^t$ as

$$Q^t(t_0,\ldots,t_n)=\prod_{i=0}^{n-1}f_{n-i}(t_i,t_{i+1}),$$
that is the law of a sequence of random trees $\theta_0,\ldots,\theta_n$ such that $\theta_0=t$ and that, given $\theta_i=t_i\in\TR_{n-i}$, the tree $\theta_{i+1}$ is chosen in $\TR_{n-i-1}$ according to the probability measure $f_{n-i}(t_i,\cdot)$.

Given $x,y\in\TR_n$, consider now the set $\Gamma_{x\to y}$ of all paths $(t_i,t_{i+1})_{i=0}^{2n-1}$ such that
\begin{itemize}
	\item for all $i$ between $0$ and $2n$, the tree $t_i$ belongs to $\TR_n$;
	\item $t_0=x$, $t_{2n}=y$ and $L(t_n)=F(x,y)$ (which, since $|F(x,y)|=n-1$, determines $t_n$);
	\item the tree $t_1$ is obtained by replanting a leaf of $t_0$ onto corner 1; for $0<i<n$, the tree $t_{i+1}$ is obtained by removing a leaf from $R(t_i)$ and replanting it onto a corner of $L(t_i)$;
	\item similarly, the tree $t_{2n-1}$ can be obtained by replanting a leaf of $t_{2n}$ onto corner 1; for $n<i<2n$, the tree $t_{i-1}$ can be obtained by removing a leaf from $R(t_i)$ and replanting it onto a corner of $L(t_i)$.
\end{itemize}
In other words, for $\gamma=(t_i,t_{i+1})_{i=0}^{2n-1}\in\Gamma_{x\to y}$, we have that the two sequences $L_1(\gamma)=(L(t_n),\ldots,L(t_1))$ and $L_2(\gamma)=(L(t_n),\ldots,L(t_{2n-1}))$ belong to $\Gamma^{F(x,y)}$, while $R_1(\gamma)=(t_0, R(t_1),\ldots,R(t_n))\in\Gamma^{x}$ and $R_2(\gamma)=(t_{2n},R(t_{2n-1}),\ldots,R(t_n))\in\Gamma^{y}$. Vice-versa, any quadruple of sequences $L_1,L_2\in \Gamma^{F(x,y)}, R_1\in\Gamma^x,R_2\in\Gamma^y$ can be assembled into a path $\gamma\in\Gamma_{x\to y}$.

We can thus construct a probability measure $P_{x\to y}$ on $\Gamma_{x\to y}$ by setting
$$P_{x\to y}(\gamma)=Q^x(R_1(\gamma))Q^y(R_2(\gamma))Q^{F(x,y)}(L_1(\gamma))Q^{F(x,y)}(L_2(\gamma)).$$

\begin{figure}
\centering\begin{tabularx}{.9\textwidth}{>{\centering\arraybackslash\hsize=.62\hsize}X>{\centering\arraybackslash}X>{\centering\arraybackslash}X>{\centering\arraybackslash}X>{\centering\arraybackslash}X>{\raggedright\arraybackslash}X}
$t_0$ & $t_1$ & $t_2$ & $t_3$ & $t_4$ & \\[.2cm]
\begin{tikzpicture}
	\begin{pgfonlayer}{nodelayer}
		\node [style=vertex] (0) at (0, -0) {};
		\node [style=vertex] (1) at (0, 0.75) {};
		\node [style=vertex] (2) at (0.5, 0.75) {};
		\node [style=vertex] (3) at (0.5, 1.5) {};
		\node [style=vertex] (4) at (0.25, 2.25) {};
		\node [style=vertex] (5) at (0.75, 2.25) {};
	\end{pgfonlayer}
	\begin{pgfonlayer}{edgelayer}
	\draw[blue] (-0.25,-0.25) rectangle (1,2.5);
	\node (x) at (0.5,-0.55) {$R_1^0$};
		\draw[very thick] (1) to (0);
		\draw[very thick] (0) to (2);
		\draw[very thick] (2) to (3);
		\draw[very thick] (3) to (4);
		\draw[very thick] (3) to (5);
	\end{pgfonlayer}
\end{tikzpicture}& 
\begin{tikzpicture}
	\begin{pgfonlayer}{nodelayer}
		\node [style=vertex] (0) at (0, -0) {};
		\node [style=vertex,fill=red] (4x) at (-0.5, 0.75) {};
		\node [style=vertex] (1) at (0, 0.75) {};
		\node [style=vertex] (2) at (0.5, 0.75) {};
		\node [style=vertex] (3) at (0.5, 1.5) {};
		\node [style=vertex,fill=black!10] (4) at (0.25, 2.25) {};
		\node [style=vertex] (5) at (0.75, 2.25) {};
	\end{pgfonlayer}
	\begin{pgfonlayer}{edgelayer}
	\draw[blue] (-0.2,-0.25) rectangle (1,2.5);
	\draw[red] (-0.3,-0.25) rectangle (-1.5,2.5);
	\node (x) at (-1,-0.55) {$L_1^5$};
	\node (x) at (0.5,-0.55) {$R_1^1$};
		\draw[very thick] (1) to (0);
		\draw[very thick] (0) to (2);
		\draw[very thick] (2) to (3);
		\draw[very thick, dashed, black!10] (3) to (4);
		\draw[very thick] (3) to (5);
		\draw[very thick,red] (4x) to (0);
	\end{pgfonlayer}
\end{tikzpicture}&
\begin{tikzpicture}
	\begin{pgfonlayer}{nodelayer}
		\node [style=vertex] (0) at (0, -0) {};
		\node [style=vertex] (4) at (-0.5, 0.75) {};
		\node [style=vertex] (1) at (0, 0.75) {};
		\node [style=vertex] (2) at (0.5, 0.75) {};
		\node [style=vertex] (3) at (0.5, 1.5) {};
		\node [style=vertex,fill=black!10] (5) at (0.75, 2.25) {};
		\node [style=vertex,fill=red] (5x) at (-0.5, 1.5) {};	
	\end{pgfonlayer}
	\begin{pgfonlayer}{edgelayer}
		\draw[blue] (-0.2,-0.25) rectangle (1,2.5);
	\draw[red] (-0.3,-0.25) rectangle (-1.5,2.5);
	\node (x) at (-1,-0.55) {$L_1^4$};
	\node (x) at (0.5,-0.55) {$R_1^2$};
		\draw[very thick] (1) to (0);
		\draw[very thick] (0) to (2);
		\draw[very thick] (2) to (3);
		\draw[very thick, black!10,dashed] (3) to (5);
		\draw[very thick] (4) to (0);
		\draw[very thick,red] (5x) to (4);
	\end{pgfonlayer}
\end{tikzpicture}&
\begin{tikzpicture}
	\begin{pgfonlayer}{nodelayer}
		\node [style=vertex] (0) at (0, -0) {};
		\node [style=vertex] (4) at (-0.5, 0.75) {};
		\node [style=vertex,fill=black!10] (1) at (0, 0.75) {};
		\node [style=vertex] (2) at (0.5, 0.75) {};
		\node [style=vertex] (3) at (0.5, 1.5) {};
		\node [style=vertex] (5) at (-0.5, 1.5) {};
		\node [style=vertex,fill=red] (1x) at (-1, 1.5) {};
	\end{pgfonlayer}
	\begin{pgfonlayer}{edgelayer}
		\draw[blue] (-0.2,-0.25) rectangle (1,2.5);
	\draw[red] (-0.3,-0.25) rectangle (-1.5,2.5);
	\node (x) at (-1,-0.55) {$L_1^3$};
	\node (x) at (0.5,-0.55) {$R_1^3$};
		\draw[very thick,red] (1x) to (4);
		\draw[very thick] (0) to (2);
		\draw[very thick] (2) to (3);
		\draw[very thick, black!10,dashed] (1) to (0);
		\draw[very thick] (4) to (0);
		\draw[very thick] (5) to (4);
	\end{pgfonlayer}
\end{tikzpicture}&
\begin{tikzpicture}
	\begin{pgfonlayer}{nodelayer}
		\node [style=vertex] (0) at (0, -0) {};
		\node [style=vertex] (4) at (-0.5, 0.75) {};
		\node [style=vertex] (2) at (0.5, 0.75) {};
		\node [style=vertex,fill=black!10] (3) at (0.5, 1.5) {};
		\node [style=vertex] (5) at (-0.5, 1.5) {};
		\node [style=vertex] (1) at (-1, 1.5) {};
		\node [style=vertex,fill=red] (3x) at (-0.5, 2.25) {};
	\end{pgfonlayer}
	\begin{pgfonlayer}{edgelayer}
		\draw[blue] (-0.2,-0.25) rectangle (1,2.5);
	\draw[red] (-0.3,-0.25) rectangle (-1.5,2.5);
	\node (x) at (-1,-0.55) {$L_1^2$};
	\node (x) at (0.5,-0.55) {$R_1^4$};
		\draw[very thick] (1) to (4);
		\draw[very thick] (0) to (2);
		\draw[very thick, black!10,dashed] (2) to (3);
		\draw[very thick] (4) to (0);
		\draw[very thick] (5) to (4);
		\draw[very thick,red] (3x) to (5);
	\end{pgfonlayer}
\end{tikzpicture}&
\multirow{2}{*}[2cm]{\begin{tikzpicture}
	\begin{pgfonlayer}{nodelayer}
		\node [style=vertex] (0) at (0, -0) {};
		\node [style=vertex] (4) at (-0.5, 0.75) {};
		\node [style=vertex, fill=black!10] (2) at (0.5, 0.75) {};
		\node [style=vertex] (5) at (-0.5, 1.5) {};
		\node [style=vertex] (1) at (-1, 1.5) {};
		\node [style=vertex] (3) at (-0.5, 2.25) {};
		\node [style=vertex, fill=red] (2x) at (-1, 2.25) {};
		\node (x) at (-0.25,3) {$t_5$};
	\end{pgfonlayer}
	\begin{pgfonlayer}{edgelayer}
		\draw[blue] (-0.2,-0.25) rectangle (1,2.5);
	\draw[red] (-0.3,-0.25) rectangle (-1.5,2.5);
	\node (x) at (-1,-0.55) {$L_1^1=L_2^1$};
	\node (x) at (0.5,-0.55) {$R_1^5=R_2^5$};
		\draw[very thick] (1) to (4);
		\draw[very thick, black!10, dashed] (0) to (2);
		\draw[very thick] (4) to (0);
		\draw[very thick] (5) to (4);
		\draw[very thick] (3) to (5);
		\draw[very thick,red] (2x) to (1);
	\end{pgfonlayer}
\end{tikzpicture}}\\
\begin{tikzpicture}
\path[use as bounding box] (-0.25,-0.95) rectangle (1,2.5);
	\begin{pgfonlayer}{nodelayer}
		\node [style=vertex] (0) at (0, -0) {};
		\node [style=vertex] (1) at (0.5, 0.75) {};
		\node [style=vertex] (2) at (0, 0.75) {};
		\node [style=vertex] (3) at (0, 1.5) {};
		\node [style=vertex] (4) at (0, 2.25) {};
		\node [style=vertex,fill=blue] (5) at (0.75, 0.35) {};
		\node [style=vertex,fill=black!10] (5x) at (-0.5, 0.75) {};
	\end{pgfonlayer}
	\begin{pgfonlayer}{edgelayer}
	\draw[blue] (-0.25,-0.25) rectangle (1,2.5);
	\node (x) at (0.5,-0.55) {$R_2^0$};
		\draw[very thick] (1) to (0);
		\draw[very thick] (0) to (2);
		\draw[very thick] (2) to (3);
		\draw[very thick] (3) to (4);
		\draw[very thick,blue] (0) to (5);
		\draw[very thick,black!10,dashed] (0) to (5x);
	\end{pgfonlayer}
\end{tikzpicture}& 
\begin{tikzpicture}
	\begin{pgfonlayer}{nodelayer}
		\node [style=vertex] (0) at (0, -0) {};
		\node [style=vertex] (4) at (-0.5, 0.75) {};
		\node [style=vertex] (2) at (0, 1.5) {};
		\node [style=vertex] (5) at (0, 0.75) {};
		\node [style=vertex,fill=black!10] (1) at (-1, 1.5) {};
		\node [style=vertex,fill=blue] (1x) at (0, 2.25) {};
		\node [style=vertex] (3) at (0.5, 0.75) {};
	\end{pgfonlayer}
	\begin{pgfonlayer}{edgelayer}
	\draw[blue] (-0.2,-0.25) rectangle (1,2.5);
	\draw[red] (-0.3,-0.25) rectangle (-1.5,2.5);
	\node (x) at (-1,-0.55) {$L_2^5$};
	\node (x) at (0.5,-0.55) {$R_2^1$};
		\draw[very thick,black!10,dashed] (1) to (4);
		\draw[very thick, blue] (1x) to (2);
		\draw[very thick] (2) to (5);
		\draw[very thick] (4) to (0);
		\draw[very thick] (5) to (0);
		\draw[very thick] (3) to (0);
	\end{pgfonlayer}
\end{tikzpicture}&
\begin{tikzpicture}
	\begin{pgfonlayer}{nodelayer}
		\node [style=vertex] (0) at (0, -0) {};
		\node [style=vertex] (4) at (-0.5, 0.75) {};
		\node [style=vertex, fill=black!10] (2) at (-1, 2.25) {};
		\node [style=vertex, fill=blue] (2x) at (0, 1.5) {};
		\node [style=vertex] (5) at (0, 0.75) {};
		\node [style=vertex] (1) at (-1, 1.5) {};
		\node [style=vertex] (3) at (0.5, 0.75) {};
	\end{pgfonlayer}
	\begin{pgfonlayer}{edgelayer}
		\draw[blue] (-0.2,-0.25) rectangle (1,2.5);
	\draw[red] (-0.3,-0.25) rectangle (-1.5,2.5);
	\node (x) at (-1,-0.55) {$L_2^4$};
	\node (x) at (0.5,-0.55) {$R_2^2$};
		\draw[very thick] (1) to (4);
		\draw[very thick,black!10,dashed] (1) to (2);
		\draw[very thick, blue] (2x) to (5);
		\draw[very thick] (4) to (0);
		\draw[very thick] (5) to (0);
		\draw[very thick] (3) to (0);
	\end{pgfonlayer}
\end{tikzpicture}&
\begin{tikzpicture}
	\begin{pgfonlayer}{nodelayer}
		\node [style=vertex] (0) at (0, -0) {};
		\node [style=vertex] (4) at (-0.5, 0.75) {};
		\node [style=vertex] (2) at (-1, 2.25) {};
		\node [style=vertex, fill=black!10] (5) at (-0.5, 1.5) {};
		\node [style=vertex, fill=blue] (5x) at (0, 0.75) {};
		\node [style=vertex] (1) at (-1, 1.5) {};
		\node [style=vertex] (3) at (0.5, 0.75) {};
	\end{pgfonlayer}
	\begin{pgfonlayer}{edgelayer}
		\draw[blue] (-0.2,-0.25) rectangle (1,2.5);
	\draw[red] (-0.3,-0.25) rectangle (-1.5,2.5);
	\node (x) at (-1,-0.55) {$L_2^3$};
	\node (x) at (0.5,-0.55) {$R_2^3$};
		\draw[very thick] (1) to (4);
		\draw[very thick] (1) to (2);
		\draw[very thick] (4) to (0);
		\draw[very thick,black!10,dashed] (5) to (4);
		\draw[very thick,blue] (5x) to (0);
		\draw[very thick] (3) to (0);
	\end{pgfonlayer}
\end{tikzpicture}&
\begin{tikzpicture}
	\begin{pgfonlayer}{nodelayer}
		\node [style=vertex] (0) at (0, -0) {};
		\node [style=vertex] (4) at (-0.5, 0.75) {};
		\node [style=vertex] (2) at (-1, 2.25) {};
		\node [style=vertex] (5) at (-0.5, 1.5) {};
		\node [style=vertex] (1) at (-1, 1.5) {};
		\node [style=vertex,fill=black!10] (3) at (-0.5, 2.25) {};
		\node [style=vertex,fill=blue] (3x) at (0.5, 0.75) {};
	\end{pgfonlayer}
	\begin{pgfonlayer}{edgelayer}
		\draw[blue] (-0.2,-0.25) rectangle (1,2.5);
	\draw[red] (-0.3,-0.25) rectangle (-1.5,2.5);
	\node (x) at (-1,-0.55) {$L_2^2$};
	\node (x) at (0.5,-0.55) {$R_2^4$};
		\draw[very thick] (1) to (4);
		\draw[very thick] (1) to (2);
		\draw[very thick] (4) to (0);
		\draw[very thick] (5) to (4);
		\draw[very thick,black!10,dashed] (3) to (5);
		\draw[very thick,blue] (3x) to (0);
	\end{pgfonlayer}
\end{tikzpicture}& \\[-0.1cm]
$t_{10}$ & $t_9$ & $t_8$ & $t_7$ & $t_6$ & \\
\end{tabularx}
\caption{The form of a path in $\Gamma_{t_0\to t_{10}}$, where $t_0,t_{10}\in\T_5$; notice that $|L(t_5)|=4$.}
\end{figure}
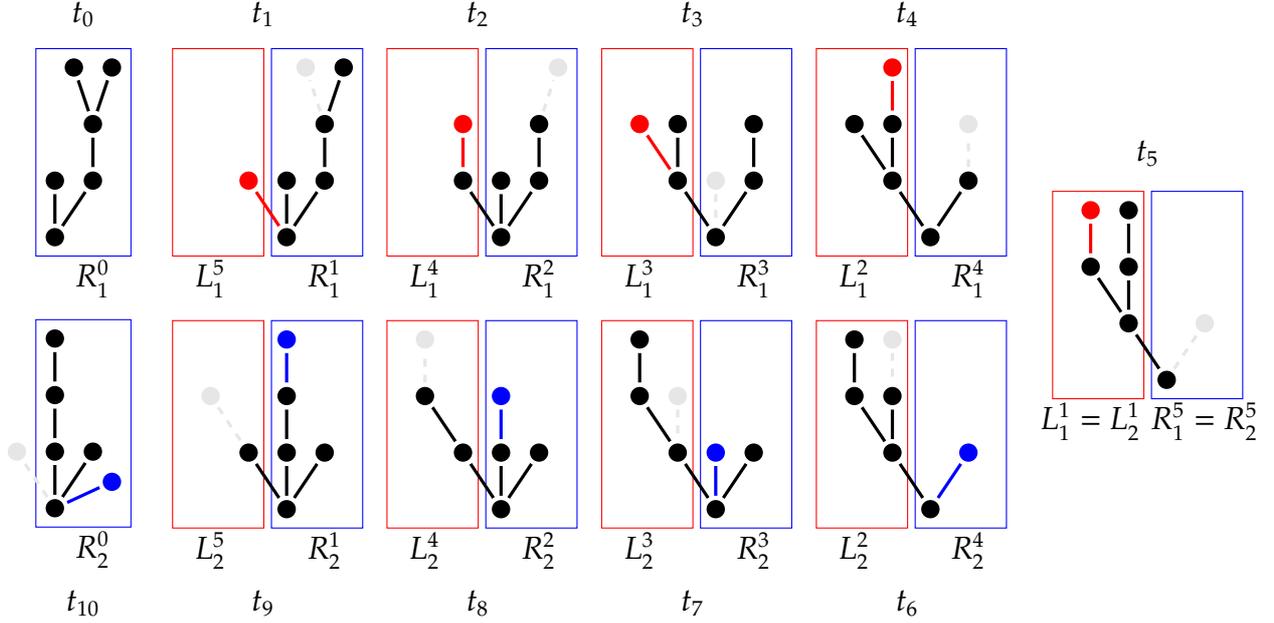

Before we prove a lower bound for the spectral gap of the leaf replanting Markov chain, it is useful to establish the following estimate:
\begin{lemma}\label{double count me}
	For all $t\in\TR_n$ and $i\in\{0,\ldots ,2n\}$ we have
	$$\sum_{x,y\in\TR_n}P_{x\to y}(\{\gamma\in\Gamma_{x\to y}\st \gamma(i)=t\})\leq 2(4r)^{n+1},$$
	where, if $\gamma=(t_i,t_{i+1})_{i=0}^{2n}$, we write $\gamma(i)$ to indicate $t_i$.
\end{lemma}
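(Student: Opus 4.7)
The plan is to split the sum into cases based on the value of $i$, exploiting the bijective decomposition of each $\gamma\in\Gamma_{x\to y}$ into the quadruple $(R_1(\gamma),L_1(\gamma),L_2(\gamma),R_2(\gamma))$, under which $P_{x\to y}$ factors as the product of four $Q$-measures. The boundary cases $i=0$ and $i=2n$ are immediate: since $\gamma(0)=x$ (resp.\ $\gamma(2n)=y$), only the pair with $x=t$ (resp.\ $y=t$) contributes, giving a total of at most $|\TR_n|<(4r)^n$.

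For the main case $0<i<n$, I would first argue from the size prescriptions in the definition of $\Gamma_{x\to y}$ that $|L(t_i)|=i-1$ along every admissible path, so the sum vanishes unless $|L(t)|=i-1$. Under this assumption, the event $\{\gamma(i)=t\}$ forces $R_1(i)=R(t)$ and $L_1(n-i)=L(t)$ (plus a colour condition on the root edge of $t_i$, which I will drop since this only enlarges the set being bounded). Summing out the unconstrained $L_2$ and $R_2$ (each of total $Q$-mass $1$), the quantity to bound becomes $\sum_{x,y}h(x)\,g(F(x,y))$, with $h(x):=\sum_{R_1\in\Gamma^x,\,R_1(i)=R(t)}Q^x(R_1)$ and $g(z):=\sum_{L_1\in\Gamma^z,\,L_1(n-i)=L(t)}Q^z(L_1)$.

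The crucial step is to establish
\[
\sum_{x\in\TR_n}h(x)=\frac{|\TR_n|}{|\TR_{n-i}|},\qquad \sum_{z\in\TR_{n-1}}g(z)=\frac{|\TR_{n-1}|}{|\TR_{i-1}|},
\]
both of which follow by unfolding the $Q$-measures into products of $f_{\cdot}$'s and applying the two identities of Proposition~\ref{hierarchy}: property (ii) collapses all sums past the constrained position to $1$, while property (iii) turns each remaining sum into a factor of the form $|\TR_k|/|\TR_{k-1}|$, and these telescope. Combining this with Lemma~\ref{small fibers}, which yields $\sum_y g(F(x,y))\leq 8r\sum_z g(z)$ for every fixed $x$, and with the elementary estimate $|\TR_k|/|\TR_{k-1}|=2r(2k-1)/(k+1)<4r$, the whole expression is bounded by $8r\cdot \tfrac{|\TR_n|}{|\TR_{n-i}|}\cdot\tfrac{|\TR_{n-1}|}{|\TR_{i-1}|}<8r\cdot (4r)^i\cdot(4r)^{n-i}=2(4r)^{n+1}$, as required.

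The case $i=n$ is simpler: $t_n$ is determined by the identity $F(x,y)=L(t)$ (with $R(t)=\bullet$, else the sum vanishes), and Lemma~\ref{small fibers} bounds the number of admissible pairs $(x,y)$ by $8r|\TR_n|<2(4r)^{n+1}$, each contributing at most $1$. The case $n<i<2n$ is entirely symmetric, using the pair $(R_2,L_2)$ in place of $(R_1,L_1)$. The main subtlety I expect is arithmetic rather than structural: the constant $8r$ from Lemma~\ref{small fibers} and the Catalan ratios combine \emph{exactly} into $2(4r)^{n+1}$ with no slack, so the proof genuinely depends on Proposition~\ref{hierarchy}(iii) delivering the precise ratio $|\TR_k|/|\TR_{k-1}|$ rather than a cruder upper bound.
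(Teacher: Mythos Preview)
Your proposal is correct and follows essentially the same route as the paper: factor $P_{x\to y}$ through the quadruple $(R_1,L_1,L_2,R_2)$, sum out the unconstrained pair, use Proposition~\ref{hierarchy}(ii),(iii) to telescope the remaining sums into Catalan ratios bounded by powers of $4r$, and invoke Lemma~\ref{small fibers} to handle the coupling through $F(x,y)$. The only cosmetic differences are that the paper keeps $i=n$ within the range $0<i\le n$ rather than treating it separately, and uses the indexing $L_1^{n+1-i}$ (sequences indexed from $1$) where you write $L_1(n-i)$; your observation about dropping the root-edge colour condition is accurate and harmless.
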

\begin{proof}

	As above, given $\gamma\in\Gamma_{x\to y}$, let us define sequences
	$R_i(\gamma)=(R_i^0,\ldots,R_i^n)$ and $L_i(\gamma)=(L_i^1,\ldots,L_i^n)$, for $i=1,2$.
	
	If $0<i\leq n$, we have $\gamma(i)=t$ if and only if $R_1^i=R(t)$ and $L_1^{n+1-i}=L(t)$; therefore, we have 
	
	$$\sum_{x,y\in\TR_n}P_{x\to y}(\{\gamma\in\Gamma_{x\to y}\st \gamma(i)=t\})$$
	$$=
	\sum_{x\in\TR_n}Q^x(\{(R_j)_{j=0}^n\in\Gamma^x\st R_{i}=R(t)\})
	\sum_{z\in\TR_{n-1}}Q^z(\{(L_j)_{j=1}^n\in\Gamma^z\st L_{n+1-i}=L(t)\})
	\sum_{\substack{y\in\TR_n\\F(x,y)=z}}1
	$$
Since we have chosen $F$ as in Lemma~\ref{small fibers}, the internal sum (having fixed $x$ and $z$) is at most $8r$; as for the other sums, we wish to show that, for any given $t\in\TR_k$ and $i\in\{0,\ldots,n\}$, we have
\begin{equation}\label{4r^{i-1}}\sum_{x\in\TR_n}Q^x(\{(R_j)_{j=0}^n\in\Gamma^x\st R_i=t\})\leq (4r)^i;\end{equation}
but, indeed,
$$\sum_{x\in\TR_n}Q^x(\{(R_j)_{j=0}^n\in\Gamma^x\st R_i=t\})=\sum_{\substack{R_0\in\TR_n,\ldots,R_{i-1}\in\TR_{n-i+1}\\R_i=t}}\prod_{j=0}^{i-1}f_{n-j}(R_j,R_{j+1})=
\prod_{j=0}^{i-1}\frac{|\TR_{n-j}|}{|\TR_{n-j-1}|}\leq (4r)^i,
$$
which we obtain by separating $R_{i-1},R_{i-2},\ldots,R_0$ from the sum, one after the other, and using the fact that $f_{n-i+1},\ldots,f_n$ satisfy requirement (iii) of Proposition~\ref{hierarchy}.

Using \eqref{4r^{i-1}} (where one needs to be weary of the fact that -- in order to keep notation consistent with previous definitions -- $R_1$ is indexed from 0 and $L_1$ is indexed from 1) we get
$$\sum_{x,y\in\TR_n}P_{x\to y}(\{\gamma\in\Gamma_{x\to y}\st \gamma(i)=t\})\leq 8r\cdot (4r)^i\cdot (4r)^{n-i}\leq 2 (4r)^{n+1}$$
for $0<i\leq n$; the same estimate is true for $i=0$, since we have
$$\sum_{y\in\TR_n}P_{t\to y}(\{\gamma\in\Gamma_{t\to y}\})=
	\sum_{z\in\TR_{n-1}}
	\sum_{\substack{y\in\TR_n\\F(x,y)=z}}1\leq 
	8r\cdot|\TR_{n-1}|\leq
	8r\cdot (4r)^n.
	$$

The case of $n<i\leq 2n$ is perfectly symmetric.
\end{proof}

All necessary notation is now in place to prove lower bounds for the spectral gap of both the leaf replanting and leaf translation Markov chains.

\begin{theorem}
If $\gamma_{\CLR}$ is the spectral gap of the leaf replanting Markov chain $\CLR$ on $\TR_n$, we have $\gamma_{\CLR}\geq C_r n^{-\frac{9}{2}}$ for an appropriate constant $C_r$ independent of $n$ .
\end{theorem}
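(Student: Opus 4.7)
The plan is to apply the Poincar\'e inequality of Diaconis--Stroock/Sinclair to the family of random paths $P_{x\to y}$ on $\Gamma_{x\to y}$ constructed above. Since every path in $\Gamma_{x\to y}$ has length exactly $2n$, and the stationary distribution $\pi$ of $\CLR$ is uniform on $\TR_n$, this will give
$$\gamma_{\CLR}\;\geq\;\frac{1}{\rho},\qquad \rho\;=\;2n\cdot\max_{e=(s,s')}\frac{1}{\pi(s)\,p_{\CLR}(s,s')}\sum_{x,y\in\TR_n}\pi(x)\pi(y)\,P_{x\to y}(e\in\gamma).$$

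The heart of the argument is a \emph{structural rigidity} of the canonical paths. By construction, after the first move $t_0\mapsto t_1$ the left subtree of $t_1$ consists of a single leaf, and each of the next $n-1$ moves shifts one leaf from $R$ to $L$; reversed accounting applies in the second half. Thus $|L(t_i)|=i-1$ for $1\leq i\leq n$ and $|L(t_i)|=2n-1-i$ for $n\leq i\leq 2n-1$. Consequently, for any fixed $s\in\TR_n$ the set of indices $i\in\{0,\ldots,2n\}$ at which $s$ can possibly equal $t_i$ is contained in
$$\bigl\{\,0,\;|L(s)|+1,\;2n-1-|L(s)|,\;2n\,\bigr\},$$
a set of at most four elements regardless of $n$. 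This is the observation that will save a factor of $n$ over a naive union bound over all $2n$ positions.

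With this in hand the calculation is routine. For any fixed transition $e=(s,s')$, using $\{\gamma(i)=s,\gamma(i+1)=s'\}\subseteq\{\gamma(i)=s\}$ and applying Lemma~\ref{double count me} at each of the at most four admissible indices $i$ yields
$$\sum_{x,y\in\TR_n}P_{x\to y}\bigl(e\in\gamma\bigr)\;\leq\;4\cdot 2(4r)^{n+1}\;=\;8(4r)^{n+1}.$$
Combining this with $p_{\CLR}(s,s')\geq 1/(n(2n-1)r)$, $\pi(s)=1/|\TR_n|$, and the Catalan lower bound $|\TR_n|\geq c_r(4r)^n n^{-3/2}$ from \eqref{catalan}, one estimates
$$\rho\;\leq\;2n\cdot n(2n-1)r\cdot|\TR_n|\cdot\frac{8(4r)^{n+1}}{|\TR_n|^2}\;=\;\frac{16\,n^2(2n-1)\,r\,(4r)^{n+1}}{|\TR_n|}\;\leq\;C\,r^{2}\,n^{9/2},$$
whence $\gamma_{\CLR}\geq C_r n^{-9/2}$ as claimed.

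The main obstacle is spotting the rigidity above: without it one union-bounds over all $2n$ positions and loses a factor of $n$, reaching only $n^{-11/2}$ (which will turn out to be, essentially, the right order for the slower leaf translation chain $\CLT$, where the sizes of subtrees along a canonical path are not as tightly prescribed). Everything else is bookkeeping — Lemma~\ref{double count me}, the one-step transition lower bound, and the Catalan asymptotics combine in a standard way to give the stated exponent.
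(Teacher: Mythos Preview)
Your proof is correct and follows essentially the same route as the paper: the canonical-path congestion bound, the key rigidity observation that $|L(t_i)|$ is prescribed along a path so that a given tree can appear at only a bounded number of positions, Lemma~\ref{double count me}, and the Catalan asymptotics. One inessential difference is that the paper uses only three indices $\{0,|L(s)|+1,2n-1-|L(s)|\}$ (since $i=2n$ cannot be the start of a transition), and your closing parenthetical is factually off: the paper obtains the \emph{same} bound $\gamma_\CLT\geq C_r n^{-9/2}$ for the leaf translation chain (Theorem~\ref{leaf translation gap}), as the extra factor of $n$ in path length is exactly cancelled by the factor of $n$ gained in the one-step probability, and the rigidity is retained by arguing about the underlying replanting path $\gamma$ rather than $\gamma^\CLT$.
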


\begin{proof}
	By the canonical paths method (see for example \cite[Section 13.4]{LevinPeres}), we have
	$$\frac{1}{\gamma_{\CLR}}\leq\max_{t,t'\in\TR_n:p_\CLR(t,t')>0}\frac{1}{\pi(t)p_{\CLR}(t,t')}\sum_{x,y\in\TR_n}\sum_{\substack{\gamma\in\Gamma_{x\to y}:\\(t,t')\in\gamma}}|\gamma|P_{x\to y}(\gamma)\pi(x)\pi(y),$$	
where, if $\gamma=(t_i,t_{i+1})_{i=0}^{N-1}$, we are writing $|\gamma|$ to mean the length $N$ of the sequence, and we say that $(t,t')\in\gamma$ if $t=t_i$ and $t'=t_{i+1}$ for some $i\in\{0,\ldots,N-1\}$.
	
	By the description of the leaf replanting Markov chain, we know that (assuming $t,t'$ differ by the replanting of a leaf) $p_{\CLR}(t,t')\geq \frac{1}{2rn^2}$; furthermore, every path $\gamma\in\Gamma_{x\to y}$ (for $x,y\in\TR_n$) has length exactly $2n$. Using the fact that $\pi$ is the uniform measure on $\TR_n$ and setting $(t,t')$ to be a pair of trees achieving the maximum above, one obtains
	$$\frac{1}{\gamma_{\CLR}}\leq\frac{2n\cdot 2rn^2}{|\TR_n|}\sum_{x,y\in\TR_n}\sum_{(t,t')\in\gamma\in\Gamma_{x\to y}}P_{x\to y}(\gamma).$$
	
	All that is left to do is to estimate the sum of all probabilities $P_{x\to y}(\gamma)$, where $\gamma$ is a path in some $\Gamma_{x\to y}$ involving the transition $(t,t')$. Notice that, if $(t,t')$ appears in $\gamma$, then the fact that for $0<i\leq n$ we have $|L(\gamma(i))|=i-1$ and for $n\leq i<2n$ we have $|L(\gamma(i))|=2n-1-i$ implies that either $t=\gamma(0)$, or $t=\gamma(|L(t)|+1)$, or $t=\gamma(2n-1-|L(t)|)$; we therefore have, if we set $S=\{0,|L(t)|+1,2n-1-|L(t)|\}$,
$$\frac{1}{\gamma_{\CLR}}\leq\frac{2n\cdot 2rn^2}{|\TR_n|}\sum_{\substack{i\in S}}\sum_{x,y\in\TR_n}
P_{x\to y}(\{\gamma\in\Gamma_{x\to y}\st\gamma(i)=t\});$$
by Lemma~\ref{double count me}, this yields
$$\frac{1}{\gamma_{\CLR}}\leq\frac{2n\cdot 2rn^2}{|\TR_n|}3\cdot 2(4r)^{n+1}\leq C_r\cdot n^{3+\frac{3}{2}}\leq C_r\cdot n^{\frac{9}{2}},$$
as wanted.
\end{proof}
 
The proof of a lower bound for the spectral gap of the leaf translation Markov chain is analogous, if a little more fiddly.

\begin{theorem}\label{leaf translation gap}
If $\CLT$ is the leaf translation Markov chain	on the state space of $r$-coloured plane trees with $n$ edges $\TR_n$ and $\gamma_{\CLT}$ is its spectral gap, we have
$$\gamma_\CLT \geq C_r n^{-\frac{9}{2}}$$
for some constant $C_r$ independent of $n$.
\end{theorem}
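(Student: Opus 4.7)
The strategy is to adapt the canonical-paths argument from the leaf replanting chain, re-using the probability measures $P_{x\to y}$ on $\Gamma_{x\to y}$ built from the hierarchy functions $f_n$ of Proposition~\ref{hierarchy} and the decoupling map $F$ of Lemma~\ref{small fibers}. Each skeleton transition $(t_i,t_{i+1})$ of a path $\gamma\in\Gamma_{x\to y}$ is a single leaf replanting, which is not a valid move of $\CLT$; the plan is to replace it by a sequence of $\CLT$-moves that simulate it, namely $|p_{i+1}-p_i|$ successive translations shifting the moving leaf from its position $p_i$ in $t_i$ to its position $p_{i+1}$ in $t_{i+1}$ (keeping the old colour), followed by one recolouring step. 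Each $\gamma$ is thereby expanded into a canonical $\CLT$-path of length at most $O(n^2)$.

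Plugging the resulting paths into the standard canonical-paths bound
\[\frac{1}{\gamma_\CLT}\leq \max_{e_X=(t,t')}\frac{1}{\pi(t)\,p_\CLT(t,t')}\sum_{x,y}\sum_{\gamma\ni e_X}|\gamma|\,P_{x\to y}(\gamma),\]
the factor $|\gamma|\cdot(1/p_\CLT)$ becomes $O(n^2)\cdot O(n)=O(n^3)$, matching the $O(n)\cdot O(n^2)$ product from the proof for $\CLR$. It remains to control the congestion sum on the right-hand side.

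For fixed $e_X=(t,t')$, I would decompose the congestion as a sum over backbone transitions $(s,s')=(\gamma_Y(i),\gamma_Y(i+1))$ whose simulation contains $e_X$, then over the possible positions $i$ of these in the backbone, and apply Lemma~\ref{double count me}. All such $(s,s')$ share the background tree $\tau$ obtained by deleting the unique ``moving leaf'' $v$ encoded by $e_X$, and are parametrised by the initial/final positions and colours of $v$. As in the proof for $\CLR$, the constraint $|L(\gamma_Y(i))|=i-1$ (together with its mirror on the second half of the backbone) determines the backbone index $i$ from $s$ up to at most three values, so Lemma~\ref{double count me} contributes a factor $O((4r)^n)$ per admissible $(s,s')$.

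The main obstacle is bounding the number of pairs $(s,s')$ whose simulation contains $e_X$. A na\"ive enumeration allows $p_s$ and $p_{s'}$ to vary independently on either side of the endpoints of $e_X$, giving $O(n^2)$ pairs and hence only a bound of order $n^{-13/2}$. Improving to $n^{-9/2}$ requires exploiting the extra geometric structure of the skeleton steps: for $0<i<n$, the step $(s,s')$ moves $v$ from $R(s)$ into $L(s)$, forcing $p_s$ and $p_{s'}$ to lie on opposite sides of the boundary between the $L$- and $R$-subtrees of $s$; combined with the constraint $|L(s)|=i-1$ and a canonical choice of simulation rule (for instance, always routing the simulation through a fixed intermediate corner and performing the recolouring at a canonical moment), one can cut the admissible $(s,s')$ down to $O(1)$. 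Multiplying the three bounds then yields $1/\gamma_\CLT=O(n^{9/2})$, as claimed.
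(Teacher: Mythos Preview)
Your overall architecture matches the paper's exactly: expand each backbone replanting step by a direct sequence of translations followed by one recolouring, then run the canonical-paths bound with the measures $P_{x\to y}$ already built. The length estimate $|\gamma^\CLT|\le O(n^2)$ and the transition bound $p_\CLT\ge 1/(n(r+2))$ are correct, and so is your identification of the congestion term as the crux.

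The gap is in the last paragraph. The claim that a suitable simulation rule cuts the admissible backbone transitions $(s,s')$ containing a given $\CLT$-edge $(t,t^{v,c})$ down to $O(1)$ is not justified, and in fact is false as stated. Deleting the moving leaf $v$ from $t$ yields a fixed tree $\tau\in\TR_{n-1}$, but $s$ is $\tau$ with the leaf re-attached at an unknown corner of $R(\tau)$ and $s'$ is $\tau$ with the leaf re-attached (and recoloured) at an unknown corner of $L(\tau)$; this genuinely leaves $O(n)$ choices for each. Routing through a fixed intermediate corner does not help: it pins down the midpoint of the simulation, which is not a backbone vertex, and still leaves both $s$ and $s'$ undetermined.

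The paper's resolution is different and does not attempt to recover $(s,s')$. From the $\CLT$-edge one reads off $\tau$, and the key observation is that $L(\tau)=L(\gamma(i))$ and $R(\tau)=R(\gamma(i+1))$ for the (essentially unique) index $i$ determined by $|L(\tau)|$. One then invokes not Lemma~\ref{double count me} itself but the minor variant
\[
\sum_{x,y\in\TR_n}P_{x\to y}\bigl(\{\gamma:R(\gamma(i))=a,\ L(\gamma(i-1))=b\}\bigr)\le 2(4r)^{n+1},
\]
whose proof is identical since the events on the $R$- and $L$-components decouple under $Q^x\otimes Q^{F(x,y)}$. This is what replaces your attempted $O(1)$ count: one never bounds the number of compatible $(s,s')$, but bounds directly the total $P_{x\to y}$-mass of paths consistent with the \emph{partial} information $(L(\tau),R(\tau))$. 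For recolouring edges $c\notin\{\rightarrow,\leftarrow\}$ the situation is easier, since $t^{v,c}$ is then an actual backbone vertex and the original Lemma~\ref{double count me} applies.
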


\begin{proof}
Suppose $t,t'\in\TR_n$ differ by the replanting and recolouring of a leaf (i.e.~are such that $p_\CLR(t,t')>0$); the leaf being replanted and recoloured may not be uniquely determined, but let $v$ be the leftmost leaf and $k$ be the minimum integer, given $v$, such that $t'=t^{v,k,c}$. If $k$ is greater than or equal to the number of the corner of $v$ in $t$, we construct a ``leaf translation path'' $\gamma(t,t')=(t_i,t_{i+1})_{i=0}^{N-1}$ from $t$ to $t'$ by setting $t_0=t$, $t_{i+1}=t_i^{v,\rightarrow}$ and choosing $N$ to be as small as possible and such that $t_{N-1}$ is $t'$ up to the recolouring of the replanted leaf; finally, we set $t_N=t_{N-1}^{v,c}$. Similarly, if the corner of $v$ in $t$ is indexed by a number strictly greater than $k$, we construct $\gamma(t,t')$ as a (minimal) sequence of leftward translations of $v$, followed by a recolouring.

Given $t_0,t_{2n}\in\TR_n$ and $\gamma=(t_i,t_{i+1})_{i=0}^{2n-1}\in\Gamma_{t_0\to t_{2n}}$, we can now define a ``leaf translation'' path $\gamma^\CLT$ by concatenating $\gamma_0,\ldots,\gamma_{2n-1}$, where $\gamma_i=\gamma(t_i,t_{i+1})$. We call $\Gamma^\CLT_{t_0\to t_{2n}}$ the set $\{\gamma^\CLT \st \gamma\in\Gamma_{t_0\to t_{2n}}\}$. Notice that $\gamma\mapsto \gamma^\CLT$ is a bijection between $\Gamma_{t_0\to t_{2n}}$ and $\Gamma^\CLT_{t_0\to t_{2n}}$, since the sequence $\gamma$ can be reconstructed from $\gamma^{\CLT}$ by setting $\gamma(0)=\gamma^\CLT(0)$ and $\gamma(i+1)=\gamma^\CLT(x)$, where 
$$x=\min\{j>i \st\gamma^\CLT(j)\mbox{ is of the form }\gamma^\CLT(j-1)^{v,c}\mbox{ with }c\notin\{\leftarrow,\rightarrow\}\}.$$
We can therefore define a probability measure on $\Gamma^\CLT_{t_0\to t_{2n}}$, which we still call $P_{t_0\to t_{2n}}$, by simply setting $P_{t_0\to t_{2n}}(\gamma^\CLT)=P_{t_0\to t_{2n}}(\gamma)$.

By the canonical paths method, we have
$$\frac{1}{\gamma_{\CLT}}\leq\max_{t,t'\in\TR_n:p_\CLT(t,t')>0}\frac{1}{\pi(t)p_{\CLT}(t,t')}\sum_{x,y\in\TR_n}\sum_{\substack{\gamma\in\Gamma_{x\to y}:\\(t,t')\in\gamma^\CLT}}|\gamma^\CLT|P_{x\to y}(\gamma)\pi(x)\pi(y).$$

Notice that for all $\gamma$ we have $|\gamma^\CLT|\leq 4n^2$ (by Remark~\ref{basic tree chains}) and, for all $t,t'\in\TR_n$ such that $p_\CLT(t,t')>0$, we have $p_\CLT(t,t')\geq \frac{1}{n(r+2)}$. We therefore have, if we also replace occurrences of $\pi(\cdot)$ with $|\TR_n|^{-1}$,
$$\frac{1}{\gamma_{\CLT}}\leq\frac{4n^3(r+2)}{|\TR_n|}\max_{(t,t^{v,c})}\sum_{x,y\in\TR_n}\sum_{\substack{\gamma\in\Gamma_{x\to y}:\\(t,t^{v,c})\in\gamma^\CLT}}P_{x\to y}(\gamma).$$

Given a leaf translation or recolouring $(t,t^{v,c})$, we now wish to determine which paths $\gamma=(t_i,t_{i+1})_{i=0}^{2n-1}\in\Gamma_{t_0\to t_{2n}}$ are such that $(t,t^{v,c})$ appears in $\gamma^\CLT$. 

If $c\notin\{\rightarrow,\leftarrow\}$, then $t^{v,c}=t_i$ for some $i$ which is almost univocally determined by $|L(t^{v,c})|$: we necessarily have $i=2n$ or $i=|L(t^{v,c})|+1$ or $i=2n-|L(t^{v,c})|-1$. We therefore find that
$$\sum_{x,y\in\TR_n}\sum_{\substack{\gamma\in\Gamma_{x\to y}:\\(t,t^{v,c})\in\gamma^\CLT}}P_{x\to y}(\gamma)\leq \sum_{x,y\in\TR_n}\sum_{\substack{i\in\{2n,|L(t)|+1,\\2n-1-|L(t)|\}}}P_{x\to y}(\{\gamma\in\Gamma_{x\to y}\st\gamma(i)=t\})\leq 6\cdot (4r)^{n+1}$$

If $c=\leftarrow$, consider the tree $\tau$ obtained by removing $v$ from $t^{v,c}$. We must have either $\tau=R(t_1)$, 
or $R(\tau)=R(t_i)$ and $L(\tau)=L(t_{i-1})$, with $i=|L(\tau)|+1$. Similarly for $c=\rightarrow$: either 
$\tau=R(t_{2n-1})$, or $R(\tau)=R(t_{i-1})$ and $L(\tau)=L(t_i)$, with $i=2n-1-|L(\tau)|$. The proof of Lemma~\ref{double count me} can be modified slightly to yield that, for any fixed $i\in\{1,\ldots,n\}$ and $a,b\in\bigcup_{j=0}^{n}\TR_j$, we have
$$\sum_{x,y\in\TR_n}P_{x\to y}(\{\gamma\in\Gamma_{x\to y}\st R(\gamma(i))=a, L(\gamma(i-1))=b\})\leq 2(4r)^{n+1}.$$

Applying both the original inequality from Lemma~\ref{double count me} and this variant yields that, when $c\in\{\rightarrow,\leftarrow\}$,
$$\sum_{x,y\in\TR_n}\sum_{\substack{\gamma\in\Gamma_{x\to y}:\\(t,t^{v,c})\in\gamma^\CLT}}P_{x\to y}(\gamma)\leq 6\cdot (4r)^{n+1}.$$

Finally, this entails
$$\frac{1}{\gamma_{\CLT}}\leq\frac{24n^3(r+2)}{|\TR_n|}(4r)^{n+1}\leq C_r\cdot n^{\frac{9}{2}}$$
for some constant $C_r$ independent of $n$.
\end{proof}

\begin{remark}
In order to obtain results for the flip chain $\F_n$, we are only interested in lower bounds for the spectral gap of the chain $\CLT$. It is not difficult, however, to obtain upper bounds proportional to $n^{-2}$ for the spectral gaps $\gamma_\CLT$ and $\gamma_\CLR$	of $\CLT$ and $\CLR$ by an argument even simpler than that of Section~\ref{section: upper bound}.

Indeed, let $H_n:\TR_n\to\mathbb{N}$ be the function giving the height of a tree, and consider the Dirichlet forms $\mathcal{E}_\CLT(n^{-\frac12}H_n,n^{-\frac12}H_n)$ and $\mathcal{E}_\CLR(n^{-\frac12}H_n,n^{-\frac12}H_n)$. Both can be bounded above by (a constant times) $n^{-2}$, using the fact that $H_n$ changes by at most 1 when a leaf replanting/translation/recolouring is performed, and that moreover, given $t\in\TR_n$, its height decreases with probability at most $\frac{1}{n}$ when taking a step of either chain (since one has to remove the ``top leaf'', which even needs to be unique). The bound on the spectral gap is then established thanks to the fact that the random variable $n^{-\frac12}H_n(t)$, where $t$ is a uniform random element of $\TR_n$, converges to a nontrivial random variable as $n\to\infty$, and in fact its variance converges to a positive constant (cf.~\cite[Section 3.1]{Ald91}).
\end{remark}

\section{A lower bound for the spectral gap of $\F^n$}\label{section: lower bound}

\subsection{Edge flips and the leaf translation Markov chain}\label{chain comparison}

We will now set up a comparison à la Diaconis--Saloff-Coste~\cite{DSC} between the Markov chain $\F^{n,\bullet}$ and a variant of the leaf translation Markov chain $\CLT$ on $\LT_n$; as per Theorem~\ref{Schaeffer}, we have an explicit bijection $\phi$ between the state space $\sQ_n^\bullet$ of $\F^{n,\bullet}$ and the set $\LT_n\times\{-1,1\}$.

A (reversible, irreducible and aperiodic) variant $\VLT$ of the leaf translation Markov chain can be defined on $\LT_n\times\{-1,1\}=\T^{(3)}_n\times \{-1,1\}$, where we consider the set of edge colours to be $\{+,-,=\}$ rather than $\{1,2,3\}$, as follows: conditionally on $\VLT_k=(t,\epsilon)$, where $t\in\LT_n$ and $\epsilon\in\{-1,1\}$, we set
\begin{itemize}
\item $\VLT_{k+1}=(t,-\epsilon)$ with probability $\frac{1}{n+1}$
\item with probability $\frac{n}{n+1}$, we select a random edge $(v,p(v))$ of $t$; if $v$ is not a leaf, we set $\VLT_{k+1}=(t,\epsilon)$; if $v$ is a leaf, we set $\VLT_{k+1}$ to be one of $(t^{v,\rightarrow},\epsilon)$, $(t^{v,\leftarrow},\epsilon)$, $(t^{v,+},\epsilon)$, $(t^{v,-},\epsilon)$, $(t^{v,=},\epsilon)$, each with probability $\frac{1}{5}$ given the choice of $v$.
\end{itemize}

From Theorem~\ref{leaf translation gap} we can deduce the following analogous estimate for the spectral gap of this chain.

\begin{cor}\label{leaf translation variant gap}
If $\widetilde{\gamma}$ is the spectral gap of the Markov chain $\VLT$ on the state space $\LT_n\times\{-1,1\}$ as defined above, we have $\widetilde{\gamma}\geq C n^{-\frac{9}{2}}$ for some constant $C$.
\end{cor}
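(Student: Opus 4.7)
The plan is to realize $\VLT$ as a random-scan product of the leaf translation Markov chain $\CLT$ on $\TR_n$ with $r=3$ (relabelling the colour set $\{1,2,3\}$ as $\{+,-,=\}$) and the deterministic flip chain on $\{-1,1\}$. Unpacking the definition of $\VLT$ and comparing it with that of $\CLT$ for $r=3$: at each step, with probability $n/(n+1)$ the sign $\epsilon$ is preserved and the tree $t$ undergoes exactly one transition of $\CLT$, and with complementary probability $1/(n+1)$, $t$ is left unchanged while $\epsilon$ is flipped deterministically. Writing $P_{\CLT}$ for the transition matrix of $\CLT$ on $\LT_n$ and $F$ for the flip matrix on $\{-1,1\}$ (with eigenvalues $1$ and $-1$), this gives the tensor decomposition
$$\widetilde{P} \;=\; \tfrac{n}{n+1}\, P_{\CLT}\otimes I \;+\; \tfrac{1}{n+1}\, I\otimes F.$$
Since $P_{\CLT}$ and $F$ are reversible with respect to the uniform measures on their respective state spaces, $\widetilde{P}$ is automatically reversible with respect to the uniform measure on $\LT_n\times\{-1,1\}$.

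Next I would invoke the standard fact that if $u$ is a right eigenvector of $P_{\CLT}$ with eigenvalue $\lambda$ and $v$ is an eigenvector of $F$ with eigenvalue $\mu\in\{-1,1\}$, then $u\otimes v$ is an eigenvector of $\widetilde{P}$ with eigenvalue $\tfrac{n}{n+1}\lambda+\tfrac{1}{n+1}\mu$, and that these form a complete eigenbasis. Consequently the spectrum of $\widetilde{P}$ consists of all sums of this form; the eigenvalue $1$ occurs only for $\lambda=\mu=1$, and the two candidates for the second-largest eigenvalue are
$$\tfrac{n}{n+1}(1-\gamma_{\CLT}) + \tfrac{1}{n+1} \;=\; 1 - \tfrac{n}{n+1}\gamma_{\CLT}\qquad\text{and}\qquad \tfrac{n}{n+1} - \tfrac{1}{n+1} \;=\; 1 - \tfrac{2}{n+1},$$
yielding $\widetilde{\gamma}\geq\min\bigl(\tfrac{n}{n+1}\gamma_{\CLT},\tfrac{2}{n+1}\bigr)$.

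Finally, I would apply Theorem~\ref{leaf translation gap} with $r=3$ to get $\gamma_{\CLT}\geq C_3 n^{-9/2}$; since $2/(n+1)$ is of order $n^{-1}$, the first term in the above minimum is the binding one for all $n\geq 1$, and the desired inequality $\widetilde{\gamma}\geq Cn^{-9/2}$ follows with an absolute constant $C$. The only point requiring any care — and it is really just a bookkeeping check rather than a genuine obstacle — is verifying the tensor decomposition itself: one must confirm that the conditional leaf-update step of $\VLT$ (choosing one of the five outcomes $t^{v,\rightarrow},t^{v,\leftarrow},t^{v,+},t^{v,-},t^{v,=}$ with equal probability $1/5$) matches exactly the leaf-update step of $\CLT$ on $\TR_n$ for $r=3$, which comprises $r+2=5$ options (two translations and three recolourings). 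Once the colour identification is fixed the two match perfectly, and the proof is complete.
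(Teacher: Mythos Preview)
Your proof is correct and in fact more streamlined than the paper's. You exploit the tensor-product structure of $\widetilde P=\frac{n}{n+1}P_\CLT\otimes I+\frac{1}{n+1}I\otimes F$ and read off the spectrum directly, which immediately gives $\widetilde\gamma=\min\bigl(\tfrac{n}{n+1}\gamma_\CLT,\tfrac{2}{n+1}\bigr)$ as an equality. The paper instead works variationally: it takes an extremal $f$, splits $\mathcal E_{\VLT}(f,f)$ into its tree-move and sign-flip parts, identifies the first as $\frac{n}{2(n+1)}\bigl(\mathcal E_\CLT(f_1,f_1)+\mathcal E_\CLT(f_{-1},f_{-1})\bigr)$, bounds the second below via Cauchy--Schwarz by a multiple of $\V(\E(f\mid\epsilon))$, and then invokes the law of total variance. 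Both routes end at the same inequality, but your spectral argument avoids the Cauchy--Schwarz step and the conditional-variance decomposition entirely; the paper's approach, on the other hand, does not require one to explicitly diagonalise $F$ or appeal to tensor-product eigenbases, staying within the Dirichlet-form framework used elsewhere in the paper.
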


\begin{proof}
Let $f:\LT_n\times\{-1,1\}\to\mathbb{R}$ be a function such that $\E_\pi(f)=0$, $\V_\pi (f)=1$ and $\widetilde{\gamma}=\mathcal{E}_\VLT(f,f)$, where $\pi$ is the uniform measure on $\LT_n\times\{-1,1\}$ and $\mathcal{E}_\VLT$ is the Dirichlet form for the Markov chain $\VLT$. Then
$$\widetilde{\gamma}=\frac12\sum_{\substack{t\in\LT_n\\v\mbox{ leaf of }t\\x\in\{\rightarrow,\leftarrow,+,-,=\}\\\epsilon\in\{-1,1\}}}(f(t,\epsilon)-f(t^{v,x},\epsilon))^2\frac{1}{2|\LT_n|}\frac{1}{5(n+1)}+\frac12
\sum_{\substack{t\in\LT_n\\\epsilon\in\{-1,1\}}}(f(t,\epsilon)-f(t,-\epsilon))^2\frac{1}{2|\LT_n|}\frac{1}{n+1}.$$

Consider now the maps $f_1,f_{-1}:\LT_n\to\mathbb{R}$ defined so that $f_\epsilon(t)=f(t,\epsilon)$ and the leaf translation Markov chain $\CLT$ on $\LT_n$. We can immediately identify the first of the two sums above as $$\frac{n}{2(n+1)}\left(\mathcal{E}_\CLT(f_1,f_1)+\mathcal{E}_\CLT(f_{-1},f_{-1})\right).$$ On the other hand, a lower bound for the second sum is given by $\frac{2}{n+1}\V(\E(f|\epsilon))$, where $\epsilon:\LT\times\{-1,1\}\to\{-1,1\}$ is the projection on the second component, since a simple application of the Cauchy-Schwarz inequality gives
$$\sum_{\substack{t\in\LT_n\\\epsilon\in\{-1,1\}}}(f(t,\epsilon)-f(t,-\epsilon))^2\frac{1}{2|\LT_n|}\frac{1}{n+1}=
\frac{1}{(n+1)4|\LT_n|^2}\left(\sum_{\substack{t\in\LT_n\\\epsilon\in\{-1,1\}}}(f(t,\epsilon)-f(t,-\epsilon))^2\right)\left(\sum_{\substack{t\in\LT_n\\\epsilon\in\{-1,1\}}}1^2\right)
$$
$$\geq\frac{1}{4(n+1)}\left(\frac{2}{|\LT_n|}\sum_{\substack{t\in\LT_n}}f_1(t)-\frac{2}{|\LT_n|}\sum_{\substack{t\in\LT_n}}f_{-1}(t)\right)^2=\frac{4}{n+1}\V(\E(f|\epsilon)).
$$

We thus have
$$\widetilde{\gamma}\geq \frac{n}{2(n+1)}\left(\mathcal{E}_\CLT(f_1,f_1)+\mathcal{E}_\CLT(f_{-1},f_{-1})\right)+\frac{2}{n+1}\V(\E(f|\epsilon))$$
$$
\geq\frac{n\gamma}{2(n+1)}(\V(f|\epsilon=1)+\V(f|\epsilon=-1))+\frac{2}{n+1}\V(\E(f|\epsilon))=
\frac{n\gamma}{n+1}\E(\V(f|\epsilon))+\frac{2}{n+1}\V(\E(f|\epsilon)),
$$
where $\gamma$ is the spectral gap of the leaf translation Markov chain $\CLT$ on $\LT_n$. Finally, using Proposition~\ref{leaf translation gap}, the variance decomposition formula and the fact that $\V_\pi(f)=1$, we obtain that, for some constants $C'$ and $C$
$$\tilde{\gamma}\geq\frac{n}{(n+1)n^{\frac{9}{2}}}C'(\E(\V_\pi(f|\epsilon))+\V_\pi(\E_\pi(f|\epsilon)))\geq \frac{C}{n^{\frac{9}{2}}}.$$
\end{proof}

We are now ready to set up a comparison between the chains $\VLT$ and $\F^{n,\bullet}$. In order to do this, we will devote the next section to explicitly building sequences of quadrangulation edge flips that turn $\phi(t,\epsilon)$ into $\phi(t',\epsilon')$, where $(t,\epsilon)$ and $(t',\epsilon')$ are elements of $\LT_n\times\{-1,1\}$ that are adjacent according to the graph of the Markov chain $\VLT$.

In particular, for each $t,v,x,\epsilon$ such that $t\in\LT_n$, $v$ is a leaf of $t$, $x\in\{\rightarrow,\leftarrow,+,=,-\}$ and $\epsilon=\pm1$, we shall build a sequence of quadrangulation edge flips $P_\epsilon(t,t^{v,x})=(q_i,e_i,s_i)_{i=1}^N$, such that
\begin{itemize}
\item $q_i\in\sQ_n^\bullet$, $e_i\in E(q_i)$, $s_i\in\{+,-\}$;
\item $q_1=\phi(t,\epsilon)$;
\item $q_{i+1}=q_i^{e_i,s_i}$, for $i=1,2,\ldots,N$;
\item $q_{N+1}=\phi(t^{v,x},\epsilon)$.
\end{itemize}

Notice that (as we remarked in Section~\ref{section: flips}) we can naturally identify vertices of $q_i$ with vertices of $q_{i+1}$ and edges of $q_i$ with edges of $q_{i+1}$ by building $q_{i+1}$ via the procedure described for flipping $e_i$. We will therefore often define edges $e_i,\ldots,e_N$ as edges of $q_1$, since edges in $E(q_1)$ have a natural interpretation in $E(q_2),\ldots,E(q_N)$.

Similarly, we will also build sequences $P(t)=(q_i,e_i,s_i)_{i=1}^N$ such that $q_1=\phi(t,1)$, $q_{N+1}=\phi(t,-1)$ and $q_{i+1}=q_i^{e_i,s_i}$.

Having constructed these in an appropriate way, a comparison of the Markov chains $\F^{n,\bullet}$ and $\VLT$ (cf.~\cite{DSC}) will yield Theorem~\ref{main theorem}, provided that we can bound the maximum length of a flip sequence with (a constant times) $n$ and show that each triple $(q,e,s)$ (where $q\in\sQ^\bullet_n$, $e\in E(q)$, $s=\pm1$) only appears in at most a constant number of sequences $P(t)$ and $P_\epsilon(t,t^{v,x})$, independent of $n$. Constructing our flip sequences and proving such bounds will be the aim of the next three subsections; Section~\ref{section: final comparison} will conclude by deriving Theorem~\ref{main theorem}. 

\subsection{The sequence $P(t)$}\label{section: rerooting}

As a matter of fact, we have already discussed the sequence $P(t)$ in Lemma~\ref{pointed irreducibility}.

From the Schaeffer construction within Section~\ref{section: Schaeffer} one can immediately see that $\phi(t,-1)$ can be obtained from $\phi(t,1)$ by simply giving the root edge the opposite orientation. If the root edge is not a double edge, this can be achieved via flips by just flipping it three times in the same direction, so we can set $P(t)=(q_i,e_i,s_i)_{i=1}^3$, with $q_1=\phi(t,1)$, $s_1=s_2=s_3=+$ and $e_i$ being the root edge of $q_i$, for $i=1,2,3$.

If the root edge of $\phi(t,1)$ is a double edge within a degenerate face, one need only perform an extra flip on one of the boundary edges of the degenerate face before and after flipping the root edge three times. Setting $q_1=\phi(t,1)$ and assuming $e'$ is the edge before the root edge in the clockwise contour of its degenerate face, we set $P(t)=(q_i,e_i,s_i)_{i=1}^5$, with $e_1=e'$, $s_1=s_2=s_3=s_4=+$, $e_i$ is the root edge of $q_i$ for $i=2,3,4$, and $e_5=e'$, $s_5=-$ (see Figure~\ref{fig:root reversal}).

\begin{lemma}\label{root reversal estimates}
For all $t\in\LT_n$, we have $|P(t)|\leq 5$. Moreover, given a triple $(q,e,s)$ where $q\in\sQ_n^\bullet$, $e\in E(q)$, $s\in\{+,-\}$, we have
$$\left|\{t\st(q,e,s)\mbox{ appears in }P(t)\}\right|\leq 9.$$
\end{lemma}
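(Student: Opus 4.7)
The length bound $|P(t)|\leq 5$ is immediate by inspection of the construction of $P(t)$ given in this subsection: the sequence has length $3$ when the root edge of $\phi(t,1)$ is not a double edge, and length exactly $5$ otherwise.

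For the counting bound, my plan is to exploit the fact that the Schaeffer map $\phi$ is a bijection, so $t$ is uniquely determined by the starting quadrangulation $q_1=\phi(t,1)$ of the sequence $P(t)$. It therefore suffices to bound the number of possible $q_1$ such that the triple $(q,e,s)$ appears somewhere in the corresponding sequence. I would enumerate by pairs (case, position), where ``case'' refers to one of the two scenarios of the construction (root of $q_1$ a double edge or not) and ``position'' ranges over the $3$ or $5$ indices of the sequence in the corresponding case, giving $3+5=8$ such pairs.

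For each fixed (case, position) pair $(\kappa,i)$, I claim there is at most one candidate $q_1$ producing the triple $(q,e,s)$ at position $i$. The reason is that, once $\kappa$ and $i$ are fixed, the first $i-1$ flips of $P(t)$ are prescribed by the construction---each one is either a clockwise flip of the current root edge, or, in the case of steps~$1$ and $5$ of case~$2$, a flip of the specific edge $e'$ lying immediately before the root edge in the clockwise contour of the degenerate face. So $q_1$ can be recovered from $q_i=q$ by inverting these flips one at a time. Inverting a clockwise root flip is trivially unique (just apply a counterclockwise root flip to the current quadrangulation, thanks to the natural edge identification that sends root to root); and the inverse of the $e'$-flip can be performed by identifying the ``newly drawn edge'' of the flip in the post-flip quadrangulation, which is pinned down by the structural constraint that $q_1$ has a double-edge root and by the prescribed location of $e'$ in the clockwise contour of its degenerate face.

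Summing over the $8$ possible (case, position) pairs, each contributing at most one candidate $t$, yields $|\{t\st (q,e,s)\text{ appears in }P(t)\}|\leq 8\leq 9$. The main delicacy I expect lies in making the edge identification in the inverse of the $e'$-flip fully unambiguous; this is purely a bookkeeping check and follows from the explicit description of a degenerate face together with the deterministic choice of $e'$, so no new ideas are needed beyond the construction already in place.
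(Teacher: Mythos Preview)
Your approach is essentially the same as the paper's: both arguments recover $q_1=\phi(t,1)$ by inverting the prescribed flips, then use bijectivity of $\phi$ to count candidates for $t$. The organisational difference is that you index by the pair (case, position), whereas the paper splits according to whether $e$ is the root edge of $q$ and whether the root of $\phi(t,1)$ is a double edge.

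There is, however, a small gap in your uniqueness claim for the $e'$-inversion at positions $2,3,4$ of the double-edge case. Once you have undone the root flips to reach $q_2$, you need to identify which edge of $q_2$ is (the image of) $e'$. Your justification---that the structural constraint ``$q_1$ has a double-edge root, with $e'$ in the prescribed contour position''---pins down $e'$ uniquely is not obviously true: in $q_2$ both endpoints of the root edge can have degree~$2$, and flipping the non-root edge incident to either endpoint counterclockwise yields a quadrangulation with a double-edge root. The paper's proof acknowledges exactly this ambiguity (``one or both endpoints of the root edge of degree~$2$'') and therefore allows up to two candidates per position, giving $6$ rather than $3$ for positions $2,3,4$ combined.

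This does not affect the final bound. Observe that positions $2,3,4$ of the double-edge case, and all three positions of the simple case, force $e$ to be the root edge of $q$ and $s=+$; positions $1$ and $5$ of the double-edge case force $e\neq\text{root}$ (and $s=+$, $s=-$ respectively). Hence for a fixed triple $(q,e,s)$ at most $3+6=9$ of your (case, position) pairs can contribute, recovering the stated bound. In short: your framework is correct, but the $e'$-step yields at most two (not one) candidates, and the bound $9$ then follows by the root/non-root dichotomy rather than by the raw count $3+5=8$.
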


\begin{proof}
	The first part of the statement is clear by definition.
	
	As for the second part, if $t$ is a tree such that $(q,e,s)$ appears in $P(t)$ and the root of $\phi(t,1)$ is not a double edge, then $\phi(t,-1)$ is obtained by flipping the root edge of $q$ one, two or three times, so there are at most three possibilities for $t$.
	
	If $(q,e,s)$ appears in $P(t)$, where $e$ is the root edge of $q$ and the root edge of $\phi(t,1)$ is a double edge, then one or more among $q$, $q^{e,-}$ and $(q^{e,-})^{e,-}$ have one or both endpoints of the root edge of degree $2$; setting $e'$ to be such that $e,e'$ share an endpoint of degree 2 in $q'\in\{q,q^{e,-},(q^{e,-})^{e,-}\}$, $\phi(t,1)$ must be of the form $q'^{e',-}$, and therefore $t$ must be one of at most 6 possible labelled trees.
	
	If $e$ is \emph{not} the root edge of $q$ and $s=-$, then $\phi(t,-1)=q^{e,s}$; if $s=+$, $\phi(t,1)=q$.
\end{proof}

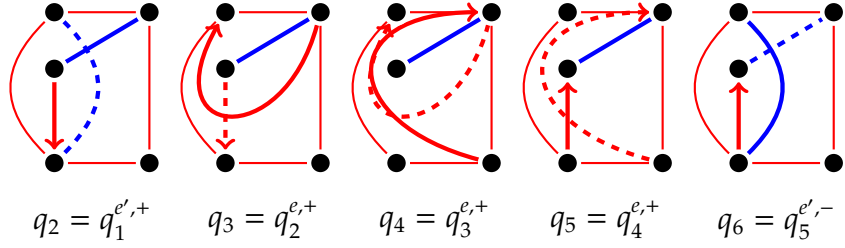
\begin{figure}\centering
\begin{tikzpicture}
	\begin{pgfonlayer}{nodelayer}
		\node [style=vertex] (0) at (-5, -1) {};
		\node [style=vertex] (1) at (-5, 0.25) {};
		\node [style=vertex] (2) at (-5, 1) {};
		\node [style=vertex] (3) at (-3.75, 1) {};
		\node [style=vertex] (4) at (-3.75, -1) {};
		\node [style=vertex] (5) at (-2.75, -1) {};
		\node [style=vertex] (6) at (-1.5, 1) {};
		\node [style=vertex] (7) at (-1.5, -1) {};
		\node [style=vertex] (8) at (-2.75, 0.25) {};
		\node [style=vertex] (9) at (-2.75, 1) {};
		\node [style=vertex] (10) at (-0.5, 0.25) {};
		\node [style=vertex] (11) at (-0.5, -1) {};
		\node [style=vertex] (12) at (0.75, 1) {};
		\node [style=vertex] (13) at (0.75, -1) {};
		\node [style=vertex] (14) at (-0.5, 1) {};
		\node [style=vertex] (15) at (1.75, 0.25) {};
		\node [style=vertex] (16) at (3, -1) {};
		\node [style=vertex] (17) at (3, 1) {};
		\node [style=vertex] (18) at (1.75, 1) {};
		\node [style=vertex] (19) at (1.75, -1) {};
		\node [style=vertex] (20) at (5.25, 1) {};
		\node [style=vertex] (21) at (4, 1) {};
		\node [style=vertex] (22) at (4, -1) {};
		\node [style=vertex] (23) at (4, 0.25) {};
		\node [style=vertex] (24) at (5.25, -1) {};
		
		\node (25) at (-4.5, -1.75) {$q_2=q_1^{e',+}$};
		\node (26) at (-2.25, -1.75) {$q_3=q_2^{e,+}$};
		\node (27) at (0, -1.75) {$q_4=q_3^{e,+}$};
		\node (28) at (2.25, -1.75) {$q_5=q_4^{e,+}$};
		\node (29) at (4.5, -1.75) {$q_6=q_5^{e',-}$};
	\end{pgfonlayer}
	\begin{pgfonlayer}{edgelayer}
		\draw [red, ultra thick, ->] (1) to (0) ;
		\draw [ultra thick, blue, dashed, bend left=45, looseness=1.25] (2) to (0);
		\draw [style=map] (2) to (3);
		\draw [style=map] (3) to (4);
		\draw [style=map] (4) to (0);
		\draw [style=map, bend right=45, looseness=1.25] (2) to (0);
		\draw [ultra thick, blue] (1) to (3); 
		\draw [ultra thick, red, dashed,->] (8) to (5);
		\draw [ultra thick, red, <-, in=-105, out=-120, looseness=3.50] (9) to (6); 
		\draw [style=map] (6) to (7);
		\draw [style=map] (7) to (5);
		\draw [style=map, bend right=45, looseness=1.25] (9) to (5);
		\draw [ultra thick, blue] (8) to (6); 
		\draw [style=map] (9) to (6);
		\draw [ultra thick, red, dashed, in=-105, out=-120, looseness=3.50, <-] (14) to (12);
		\draw [red, ultra thick, <-, in=165, out=180, looseness=2.50] (12) to (13); 
		\draw [style=map] (13) to (11);
		\draw [style=map, bend right=45, looseness=1.25] (14) to (11);
		\draw [ultra thick, blue] (10) to (12);
		\draw [style=map] (14) to (12);
		\draw [style=map](12) to (13);
		\draw [ultra thick, red, dashed, in=165, out=180, looseness=2.50, <-] (17) to (16);
		\draw [style=map] (16) to (19);
		\draw [style=map, bend right=45, looseness=1.25] (18) to (19);
		\draw [blue, ultra thick] (15) to (17);
		\draw [style=map] (18) to (17);
		\draw [style=map] (17) to (16);
		\draw [ultra thick, red, ->] (19) to (15); 
		\draw [style=map] (24) to (22);
		\draw [style=map, bend right=45, looseness=1.25] (21) to (22);
		\draw [ultra thick, blue, dashed] (23) to (20);
		\draw [style=map] (21) to (20);
		\draw [style=map] (20) to (24);
		\draw [ultra thick, red, ->] (22) to (23); 
		\draw [ultra thick, blue, bend right=45, looseness=1.25] (22) to (21); 
	\end{pgfonlayer}
\end{tikzpicture}
\caption{\label{fig:root reversal}A sequence of flips $P(t)=(q_i,e_i,s_i)_{i=1}^5$, where the root edge of $q_1=\phi(t,1)$ is a double edge within a degenerate face. Notice that either the sequence $(q_2,e_2,s_2),(q_3,e_3,s_3),(q_4,e_4,s_4)$ 
is an example of $P(t')$, for some $t'$ such that the root of $\phi(t',1)$ is not a double edge within a degenerate face.}
\end{figure}

\subsection{The colour change sequences $P_\epsilon(t,t^{v,-})$, $P_\epsilon(t,t^{v,=})$, $P_\epsilon(t,t^{v,+})$}\label{section: colour change}

This section will be devoted to constructing the sequences $P_\epsilon(t, t^{v,c})$, where $t\in\LT_n$, $\epsilon\in\{-1,1\}$, $v$ is a leaf of $t$ and $c\in\{-,=,+\}$, i.e.~sequences of quadrangulation edge flips whose aim is to achieve a ``colour change'', or equivalently a ``label change'', from the leaf translation Markov chain $\VLT$ on $\LT_n\times\{-1,1\}$.

Given a leaf label change $(t, t^{v,c})$, where $c\in \{=,+,-\}$, we need to construct a sequence $P_\epsilon(t, t^{v,c})=(q_i,e_i,s_i)_{i=1,\ldots,N}$ such that $q_1=\phi(t,\epsilon)$, $q_{i+1}=q_i^{e_i,s_i}$ and $q_{N+1}=\phi(t^{v,c},\epsilon)$. Our aim will then consist in estimating the maximum length $N$ of such sequences in terms of $n$, as well as the number of quadruples $(t,v,c,\epsilon)$, where $t\in\LT_n$, $v$ is a leaf in $t$, $c\in\{=,+,-\}$ and $\epsilon\in\{-1,1\}$, such that a fixed triple $(q, e, s)$ appears in the sequence $P_\epsilon(t, t^{v,c})$.

We shall describe explicitly all sequences of the form $P_\epsilon(t, t^{v,+})$, where $l(v)=l(p(v))$ in $t$, and $P_\epsilon(t, t^{v,-})$, where $l(v)-l(p(v))=1$ in $t$, that ``is colour changes'' from $=$ to $+$ and from $+$ to $-$. All other sequences will be built from these in the natural way, by concatenating them and/or running them backwards. If $t^{v,x}=t$, we set $P_\epsilon(t,t^{v,x})$ to be empty.

Let us first consider the case of a colour change from $=$ to $+$, which is easily dealt with.

\begin{lemma}\label{=->+ construction}
Given $(t,\epsilon)\in\LT_n\times\{-1,1\}$ and a leaf $v$ of $t$ such that $l(v)=l(p(v))$, the quadrangulations $q=\phi(t,\epsilon)$ and $q'=\phi(t^{v,+},\epsilon)$ differ by an edge flip.\end{lemma}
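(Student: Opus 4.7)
Let $c_i$ denote the unique corner of the leaf $v$ in the contour of $t$, so that $c_{i-1}$ and $c_{i+1}$ are corners of $p(v)$ adjacent to it in the cyclic contour. Write $L=l(v)=l(p(v))$, so that $l(c_{i-1})=l(c_i)=l(c_{i+1})=L$; note also that the minimum label $\ell$ appearing in $t$ coincides with the minimum in $t^{v,+}$, since $p(v)$ (which still has label $L$) witnesses any value attained by $v$ in $t$. The plan is first to show that $v$ has degree~$1$ in both $\phi(t,\epsilon)$ and $\phi(t^{v,+},\epsilon)$, then to identify the unique edge incident to $v$ in each of them, and finally to argue that the transition between the two is a legitimate edge flip of a double edge.

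For the degree claim, note that the Schaeffer construction associates to $c_i$ a single outgoing arc. I would argue that in $t$ no other Schaeffer arc targets $c_i$: if some corner $c_k$ with label $L+1$ pointed to $c_i$, then $c_i$ would be the first $L$-corner after $c_k$ along the contour, but $c_{i-1}$ (if $c_k$ precedes $c_i$) or $c_{i+1}$ (if $c_k$ follows $c_i$ and passes $c_{i+1}$ before wrapping to $c_i$) already provides an earlier $L$-corner. Thus $v$ has degree~$1$ in $\phi(t,\epsilon)$. The same reasoning, with $L$ replaced by $L+1$, gives degree $1$ in $\phi(t^{v,+},\epsilon)$, once one observes that a corner $c_k$ of label $L+2$ going to $c_i$ in $t^{v,+}$ would need no intervening $L+1$-corner between $c_k$ and $c_i$, which is impossible since labels change by at most $1$ between consecutive corners and the path from $L+2$ down to $L$ at $c_i$ (in $t$) must traverse an $L+1$-corner.

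Now identify the unique edge at $v$. In $\phi(t,\epsilon)$, it is the Schaeffer arc from $c_i$ either to the next $(L-1)$-corner $c_j$ (if $L>\ell$) or to $\delta$ (if $L=\ell$); call this edge $e$ and its other endpoint $w$. In $\phi(t^{v,+},\epsilon)$, the corner $c_i$ now carries label $L+1>\ell$, so its Schaeffer arc goes to the next $L$-corner, which is $c_{i+1}$, a corner of $p(v)$; thus the unique edge at $v$ becomes $(v,p(v))$. I also need to verify that no other Schaeffer arc is altered by the relabelling: arcs issued from corners of label different from $L+1$ or $L-1$ are unaffected, while an arc from some corner $c_k$ of label $L+1$ whose target in $t$ was $c_{i-1}$ or earlier still finds the same first $L$-corner in $t^{v,+}$ (since $c_i$ is now labelled $L+1$, not $L$), and an arc from a corner of label $L+2$ cannot reroute to $c_i$ by the same ``interposing $L+1$-corner'' argument as above. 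So the two quadrangulations differ exactly in the edge incident to $v$.

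Finally, since $v$ has degree $1$ in $\phi(t,\epsilon)$, the edge $e$ is a double edge inside a degenerate face $F$ of $\phi(t,\epsilon)$. The three Schaeffer arcs issued from $c_{i-1},c_i,c_{i+1}$ all share the common target $w$ (either $c_j$ or $\delta$), and inspection of the contour shows that these three arcs, together with the double edge $e$ traversed on both of its sides, bound $F$; hence $F$ has exactly three vertices, namely $v$, $w$, and $p(v)$, the last being the ``third vertex'' of $F$ in the sense of the flip rule for double edges described in Section~\ref{section: flips}. Performing either flip on $e$ therefore replaces $e$ by an edge from $v$ to $p(v)$, yielding precisely the edge that the Schaeffer map assigns to $c_i$ in $\phi(t^{v,+},\epsilon)$. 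The root edge is unaffected since it issues from the origin's corner $c_1\neq c_i$, so the flip also preserves the rooting and the marked vertex $\delta$. The main subtlety in the argument is the careful shielding analysis verifying that only the single arc at $c_i$ is altered; the remainder is a direct unpacking of the Schaeffer rules and the definition of a flip on a double edge.
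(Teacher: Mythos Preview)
Your proof is correct and follows essentially the same approach as the paper's: both argue that raising the label of $v$ affects only the Schaeffer arc issued from the corner $c_i$ of $v$, identify the degenerate face containing $v$, and observe that the change is precisely the double-edge flip. Your version is more explicit in verifying that no other arc is rerouted (the paper dismisses this ``by definition''); note only the small slip where you write ``label different from $L+1$ or $L-1$'' but then (correctly) treat the cases $L+1$ and $L+2$.
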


\begin{proof}
Consider all corners of $t$ but the one corner around the leaf $v$; their target corners as determined by the Schaeffer bijection are unaffected by increasing the label of $v$, by definition. In particular, the two corners immediately before and after the corner of $v$ in the contour, which are corners of $p(v)$, share the same target corner before and after the label change, making the quadrangulation face which corresponds to the tree edge $(v,p(v))$ a degenerate face. The only effect of the label increase is that of changing the target of the $v$ corner to the appropriate corner of $p(v)$, i.e.~flipping the double edge $e$ within the aforementioned degenerate face of $\phi(t,\epsilon)$. Also notice that the edge issued from the root corner of $t$ (which cannot be the corner of the leaf $v$) is unaffected both by the label change and by the flip, so we do have $\phi(t^{v,+},\epsilon)=\phi(t,\epsilon)^{e,+}$.
\end{proof}

It is therefore natural, when $l(v)=l(p(v))$, to set $P_\epsilon(t,t^{v,+})$ to $(q,e,+)$, where $e$ is the double edge of $\phi(t,\epsilon)$ incident to the vertex $v$. Similarly, we will set $P_\epsilon(t^{v,+},(t^{v,+})^{v,=})=P_\epsilon(t^{v,+},t)=(q^{e,+},e,-)$, thus covering all the cases of a colour change from $+$ to $=$ and vice-versa with flip sequences of length~1.

The construction of sequences of the type $P_\epsilon(t,t^{v,-})$ is less immediate. First, let us give a ``static'' description of how the label change affects the corresponding quadrangulation. Recall the mapping $\phi$ sending $(t,\epsilon)\in \LT_n \times \{-1,1\}$ to $q\in \sQ_n^\bullet$ as described in Section~\ref{section: Schaeffer}. The quadrangulation $q$ is constructed via the map $\phi$ by considering each corner $c$ of $t$ and drawing an edge from $c$ to another corner which we refer to as the target corner of $c$, henceforth denoted by $t(c)$. Recall also that $\delta$ denotes the distinguished vertex of $q$. 

\begin{lemma}\label{+->- static}
Consider a pair $(t,\epsilon)\in\LT_n\times\{-1,1\}$ and a leaf $v$ of $t$ such that $l(v)=l(p(v))+1$; let $c$ be the corner of $p(v)$ right after the corner of $v$ in the clockwise contour of $t$. Suppose the target corner $t(c)$ of $c$ is adjacent to a vertex $w\in V(t)$ and consider the quadrangulation edges $e$ and $e'$ drawn by the Schaeffer correspondence between $c$ and $t(c)$, and between $t(c)$ and $t(t(c))$, respectively. Let $e_1,\ldots,e_k$ be the edges incident to $w$ lying strictly between $e$ and $e'$, in clockwise order around $w$. If $t(c)$ is instead the corner around $\delta$, let $e_1,\ldots,e_k$ be all quadrangulation edges incident to $\delta$. 

The quadrangulation $q'=\phi(t^{v,-},\epsilon)$ can be obtained from $q=\phi(t,\epsilon)$ by ``rerouting'' the edges $e_1,\ldots,e_k$ to $v$ -- which is done by erasing their intersection with a suitably small neighbourhood of $w$ (or $\delta$) and replacing it with paths to $v$ drawn in the natural planar way
 -- then replacing the quadrangulation edge issued from $v$ with one joining $v$ to $t(t(c))$ (or to $\delta$, if $t(c)$ is the corner around $\delta$): see Figure \ref{fig:+->- static}. Notice that, if the root edge of $q=\phi(t,\epsilon)$ is one of $e_1,\ldots,e_k$, then it is ``rerouted'' to $v$ and maintains the same orientation in $q'$ (and, if it is not, then it is ``preserved''). 
\end{lemma}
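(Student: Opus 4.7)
Plan: Since the two labelled trees $(t,\epsilon)$ and $(t^{v,-},\epsilon)$ share the same underlying plane tree and differ only in the label of $v$'s unique corner, which drops from $l(p(v))+1$ to $l(p(v))-1$, I would compare the Schaeffer chords produced by the two constructions chord by chord. The central tool throughout is the walk property of contour labels: consecutive corners in the clockwise contour have labels differing by at most one, since adjacent vertices of a labelled tree do. This property confines the chord changes to a local neighbourhood around $v$'s corner and blocks all ``long-range'' surprises.

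First I would argue that no corner of $t$ targets $v$'s corner in the Schaeffer construction: any such source $c''$ would have to be labelled $l(p(v))+2$ with no intermediate label-$(l(p(v))+1)$ corner on the forward arc from $c''$ to $v$, which the walk property forbids, given that the corner $c_{prev}$ immediately preceding $v$'s corner is labelled $l(p(v))$. Consequently the only outgoing chord of $q$ that can change is the one issued from $v$'s corner, and the only incoming chords that can change are those whose source's target becomes $v$'s corner in $t^{v,-}$. For the new outgoing chord from $v$'s corner I would walk forward from $v$'s (relabelled) corner: past $c$, then through the arc up to $t(c)$ where all intermediate labels are $\geq l(p(v))$, then through the arc up to $t(t(c))$ where they are $\geq l(p(v))-1$, concluding that $t(t(c))$ is the first label-$(l(p(v))-2)$ corner encountered. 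The degenerate case $l(p(v))=\ell$ is handled in parallel: $v$'s new label is then below the old minimum, so $v$'s chord goes directly to $\delta$.

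The incoming chord changes come from corners $c'$ labelled $l(p(v))$ whose first forward label-$(l(p(v))-1)$ corner becomes $v$'s corner in $t^{v,-}$. By the walk property these are precisely the label-$(l(p(v)))$ corners in the backwards contour arc from $v$'s corner up to the first corner labelled at most $l(p(v))-1$, starting with $c_{prev}$. A short walk argument computes their original target in $t$ as $t(c)$, so in $q$ their chords are incoming edges at $w$ at corner $t(c)$, and in $q'$ they are rerouted to end at $v$'s corner. The main obstacle will be the final planar bookkeeping: verifying that these rerouted chords correspond exactly to the edges lying strictly between $e$ and $e'$ in clockwise order around $w$. For this I would invoke the standard fan structure of the Schaeffer construction at a target corner, namely that the incoming chords at $t(c)$ are arranged around $w$ in an order determined by the contour positions of their sources, so that $e$ (coming from $c$, the closest source after $v$ in the forward direction) and $e'$ (outgoing to $t(t(c))$) naturally bracket the sub-fan of chords associated to the backwards arc identified above, while the unaffected chords sit on the opposite side. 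The case $t(c)=\delta$ runs parallel: when $l(p(v))=\ell$, every $\delta$-edge of $q$ plays the role of the bracketed arc and is rerouted to $v$, while $v$'s new chord lands at $\delta$. Finally, the root-edge orientation assertion is a direct check from the Schaeffer root rule, since the root corner of $t$ and the sign $\epsilon$ are both preserved by $t\mapsto t^{v,-}$, so a root edge appearing among the $e_i$ is reattached to $v$ with the same orientation.
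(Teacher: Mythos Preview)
Your proposal is correct and follows essentially the same approach as the paper: both arguments identify the chord from $v$'s corner as the only outgoing chord that changes (its new target being $t(t(c))$), characterise the affected incoming chords as precisely those whose source lies on the backward contour arc from $v$ up to the first label at most $l(p(v))-1$ and whose original target is therefore $t(c)$, and then match this set with the edges strictly between $e$ and $e'$ around $w$. The paper is terser---for instance it asserts without justification that $c_l$ is not the target of any corner in $t$, which you prove via the walk property---but the logical skeleton is the same.
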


\begin{figure}\centering
\begin{tikzpicture}[scale=1.5]
	\useasboundingbox (-3.8,-1.3) rectangle (0.3,3);
	\begin{pgfonlayer}{nodelayer}
		\node [style=vertex, label=left:0] (0) at (-2.25, -1) {};
		\node [style=vertex, label=left:1] (1) at (-2.25, -0) {};
		\node [style=vertex, label=2] (2) at (-3, 1) {};
		\node [style=vertex, label=left:1] (3) at (-2.25, 1) {};
		\node [style=vertex, label=left:2, fill=red, draw=pink, label=above:$v$] (4) at (-2, 1.75) {};
		\node [style=vertex, label=below:0, label=right:$w$] (5) at (-1.25, -0) {};
		\node [style=vertex, label=$\delta$, fill=red] (6) at (0, 1) {};
		\node (14) at (-1.75, 0.5) {\contour{white}{$e$}};
		\node (15) at (-0.68, 0.5) {\contour{white}{$e'$}};
		\node (16) at (-1.5, 1.5) {\contour{white}{$e_1$}};
		\node (17) at (-1.45, 1.9) {\contour{white}{$e_2$}};
		\node (18) at (-1.23, 2.25) {\contour{white}{$e_3$}};
	\end{pgfonlayer}
	\begin{pgfonlayer}{edgelayer}
	\fill[red!10] (6) circle (8pt);
		\draw [style=tree] (0) to (1);
		\draw [style=tree] (1) to (2);
		\draw [style=tree] (1) to (3);
		\draw [style=tree] (3) to (4);
		\draw [style=tree] (1) to (5);
		\draw [style=map, very thick, ->, bend right=60, looseness=1.50] (0) to (6);
		\draw [style=map, blue, very thick, in=75, out=145, looseness=11] (1) to (5);
		\draw [style=map, ultra thick] (5) to (6);
		\draw [style=map, very thick, blue, in=90, out=105, looseness=8.25] (1) to (5);
		\draw [style=map, bend left=15, looseness=1.00] (2) to (1);
		\draw [style=map, very thick, blue, in=90, out=105, looseness=3.75] (3) to (5);
		\draw [style=map, bend left=60, looseness=1.25] (4) to (3);
		\draw [style=map, ultra thick] (3) to (5);
		\draw [style=map, bend left=60, looseness=1.25] (1) to (0);
	\end{pgfonlayer}
\end{tikzpicture}
\begin{tikzpicture}[scale=1.5]
	\useasboundingbox (-3.8,-1.3) rectangle (0.3,3);
	\begin{pgfonlayer}{nodelayer}
		\node [style=vertex, label=left:0] (0) at (-2.25, -1) {};
		\node [style=vertex, label=left:1] (1) at (-2.25, -0) {};
		\node [style=vertex, label=2] (2) at (-3, 1) {};
		\node [style=vertex, label=left:1] (3) at (-2.25, 1) {};
		\node [style=vertex, label=above left:0, fill=red, draw=pink, label=above right:$v$] (4) at (-2, 1.75) {};
		\node [style=vertex, label=below:0, label=right:$w$] (5) at (-1.25, -0) {};
		\node [style=vertex, label=$\delta$, fill=red] (6) at (0, 1) {};
		\node (14) at (-1.75, 0.5) {\contour{white}{$e$}};
		\node (15) at (-0.68, 0.5) {\contour{white}{$e'$}};
	\end{pgfonlayer}
	\begin{pgfonlayer}{edgelayer}
	\fill[red!10] (6) circle (8pt);
		\draw [style=tree] (0) to (1);
		\draw [style=tree] (1) to (2);
		\draw [style=tree] (1) to (3);
		\draw [style=tree] (3) to (4);
		\draw [style=tree] (1) to (5);
		\draw [style=map, very thick, ->, bend right=60, looseness=1.50] (0) to (6);
		\draw [style=map, blue!10, very thick, dashed, in=75, out=145, looseness=11] (1) to (5); %
			\draw [style=map, very thick, blue!10, dashed, in=90, out=105, looseness=3.75] (3) to (5); %
			\draw [style=map, very thick, blue!10, dashed, in=90, out=105, looseness=8.25] (1) to (5); %
		\draw [style=map, blue, very thick, in=160, out=145, looseness=3] (1) to (4);
		\draw [style=map, ultra thick] (5) to (6);
		\draw [style=map, very thick, blue, in=180, out=105, looseness=1.5] (1) to (4);
		\draw [style=map, bend left=15, looseness=1.00] (2) to (1);
		\draw [style=map, very thick, blue, in=195, out=105, looseness=1] (3) to (4);
		\draw [style=map, very thick, dashed, red!10, bend left=60, looseness=1.25] (4) to (3);
		\draw [style=map] (4) to (6);
		\draw [style=map, ultra thick] (3) to (5);
		\draw [style=map, bend left=60, looseness=1.25] (1) to (0);
	\end{pgfonlayer}
\end{tikzpicture}
\caption{\label{fig:+->- static}The quadrangulations $\phi(t,1)$ and $\phi(t^{v,-},1)$ for a tree $t\in\LT_5$ such that $l(v)=l(p(v))+1$.}
\end{figure}
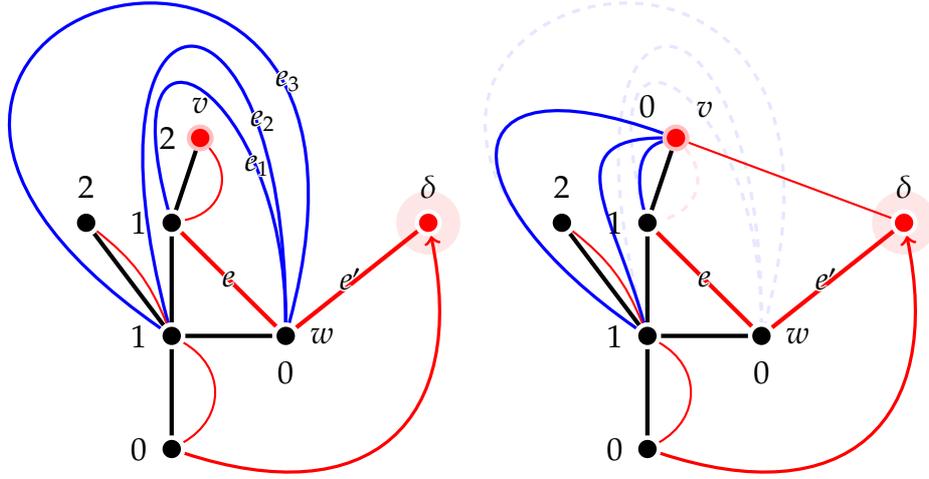

\begin{proof}
Let $c_1,\ldots,c_{2n}$ be the corners in the clockwise contour of $t$, and suppose $c_l$ is the corner around the leaf $v$. Let us first suppose that $l(c_{l+1})$ is not the minimal label in the tree (i.e. that its target is not the corner around $\delta$). Notice that decreasing the label of $v$ (by 2) does not affect any edges drawn by the Schaeffer correspondence other than
\begin{itemize}
	\item the edge drawn from $c_l$ to $c_{l+1}$, which is replaced by an edge between $c_l$ and $t(t(c_{l+1}))$, that is the first corner labelled $l(p(v))-2$ in the clockwise contour after $c_l$;
	\item all edges drawn from $c_i$ to $t(c_i)$, where $t(c_i)=t(c_{l+1})$ and $c_i$ \emph{does not} lie between $c_l$ and $t(c_{l+1})$, since the target of $c_i$ becomes $c_l$ if the label of $c_l$ is decreased by 2; indeed, those are replaced by edges joining $c_i$ to $c_l$.
\end{itemize}
The fact that all other targets remain the same should be clear: if only the label of $c_l$ is changed, the target of a corner $c\neq c_l$ may change only by becoming $c_l$ or by no longer being $c_l$. Since $c_l$ is not the target of any other corner in $t$, the edges affected are those for which $c_l$ lies between their origin corner and their target corner, labelled $l(c_{l+1})-1$, in the clockwise contour, as described above.

All that is left to show is that those edges are $e_1,\ldots,e_k$; indeed they are edges whose target is a corner of $w$ and whose origin corner comes after $t(c_{l+1})$ and before $c_l$ (or, equivalently, strictly before $c_{l+1}$), in the (cyclic) clockwise contour; that is, they lie strictly between $e=(c_{l+1},t(c_{l+1}))$ and $e'=(t(c_{l+1}),t(t(c_{l+1})))$. 

If $t(c_{l+1})$ is the corner around $\delta$ in $t$, then what happens when the label of $v$ is changed is even simpler, since $v$ becomes the unique vertex with minimal label: all corners whose target in $t$ is the corner around $\delta$ change their target to $c_l$ in $t^{v,-}$, including $c_{l-1}$ and $c_{l+1}$, while the edge $(c_l,c_{l+1})$ is replaced by one joining $v$ to $\delta$, which will have degree 1 in $\phi(t^{v,-},\epsilon)$.
\end{proof}

Before we give an actual description of a sequence of quadrangulation flips that achieves exactly the changes described by Lemma~\ref{+->- static}, we shall construct a sequence of flips that will be useful in what follows and whose only aim is to change the root edge of a quadrangulation by ``exchanging'' two edges.

\begin{lemma}[Rerooting]\label{root rotation}Consider a quadrangulation $q\in\sQ_n^\bullet$; let $e=(v,w)$ be its root edge (with either $v$ or $w$ being the origin) and let $\eta$ be the edge after $e$ in clockwise order around $v$, which we suppose distinct from $e$. Let $q'$ be the same quadrangulation (with the same distinguished vertex), rerooted in $\eta$ and with $v$ as the origin if and only if $v$ was the origin in $q$. Define the sequence of quadrangulation flips $P(q,q')=(q_i,e_i,s_i)_{i=1}^5$ so that $q_{i+1}=q_i^{e_i,s_i}$, $q_1=q$, $e_1=e_3=e_5=\eta$, $e_2=e_4=e$, $s_1=s_2=s_3=+$, $s_4=s_5=-$ (see Figure~\ref{fig:root rotation}). Then $q_6:=q_5^{e_5,s_5}=q'$.
\end{lemma}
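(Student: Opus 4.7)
The strategy is a direct local verification: all five prescribed flips involve only the edges $e$ and $\eta$ (and the hexagons obtained by erasing them), so nothing outside a small neighbourhood of the shared vertex $v$ changes throughout the sequence. The plan is to set up the local picture at $v$ and then track the five flips one at a time, showing that the net effect is to leave the underlying planar map unchanged and only reroot it from $e$ to $\eta$ with the correct orientation.

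First I would describe the local configuration. Let $w$ be the other endpoint of $e$, and let $f$ denote the quadrilateral face of $q$ lying immediately on the side of $e$ where the clockwise rotation around $v$ leads to $\eta$; by hypothesis the corner of $\eta$ at $v$ lies in $f$, so that $\eta$ is one of the four edges bordering $f$. Call the other two vertices of $f$ by $x$ and $y$, where $\eta = (v,y)$ (allowing for coincidences of vertices in the degenerate sub-cases). The root of $q$ is $e$, and from this local picture one can read off precisely what $q'$ must look like.

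Next I would compute the five successive quadrangulations $q_2, \ldots, q_6$, at each step keeping track, via the natural identification of vertices and edges across flips discussed in Section~\ref{section: flips}, of which edges of $q_i$ correspond to the original $e$ and $\eta$:
\begin{itemize}
\item $q_2 = q^{\eta,+}$ redraws $\eta$ inside the hexagon obtained by erasing $\eta$; the root stays $e$.
\item $q_3 = q_2^{e,+}$ flips the current root edge clockwise, relocating the root within the hexagon obtained by erasing $e$.
\item $q_4 = q_3^{\eta,+}$ flips (the edge identified with) $\eta$ clockwise.
\item $q_5 = q_4^{e,-}$ flips the current root edge counterclockwise.
\item $q_6 = q_5^{\eta,-}$ flips $\eta$ counterclockwise.
\end{itemize}
At each step one draws the newly created hexagonal face and applies the clockwise or counterclockwise rotation rule from the definition of flip. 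The verification amounts to checking that the combined effect is to restore $e$ to its original position, restore $\eta$ to its original position, leave the distinguished vertex untouched, and place the root at $\eta$ oriented so that $v$ is its origin iff $v$ was the origin of $e$ in $q$; everything outside the union of the faces incident to $e$ and $\eta$ is visibly unaffected.

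The main obstacle is bookkeeping: because the root moves during the sequence and $e$, $\eta$ are redrawn repeatedly, one has to be careful about which physical edge is meant by ``$e$'' or ``$\eta$'' at each intermediate stage. A secondary subtlety is the treatment of degenerate sub-cases, namely when $e$ or $\eta$ is a double edge of a degenerate face, or when the two faces incident to one of them coincide; these require the alternative bullet of the flip definition, but the five-step pattern goes through with essentially the same local picture. Figure~\ref{fig:root rotation} provides the pictures that make each of the five steps transparent and reduces the proof to checking the claimed local moves.
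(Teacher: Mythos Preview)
Your proposal is correct and follows essentially the same approach as the paper: a direct local verification that tracks the five flips inside the union of the faces adjacent to $e$ and $\eta$. The paper makes this slightly more concrete by noting that in the generic case this union is an octagon whose inner contour has eight corners $c_0,\ldots,c_7$, with $\eta$ joining $c_0$ to $c_5$ and $e$ joining $c_0$ to $c_3$, and then checking (via Figure~\ref{fig:root rotation}) that after the five flips the roles of $e$ and $\eta$ are swapped; your single-face setup with vertices $v,w,x,y$ is a bit too small to see all the moves, but since you later invoke the full union of faces incident to $e$ and $\eta$, the substance is the same.
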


\begin{proof}
	Consider the union of the faces of $q$ that are adjacent to $e$ or to $\eta$; unless either $e$ or $\eta$ is a double edge within a degenerate face, this is a (generalised) octagon, in the sense that its boundary has an inner contour with exactly 8 corners, which we can cyclically number as $c_0,\ldots,c_7$. We can suppose $\eta$ joins $c_0$ to $c_5$ and $e$ joins $c_0$ to $c_3$; it is then immediate to verify, as in Figure~\ref{fig:root rotation}, that the given sequence of flips ultimately results in $\eta$ joining $c_0$ to $c_3$ and $e$ joining $c_0$ to $c_5$.
	
	An analogous check can be performed for the case where $e$ or $\eta$ is a double edge, where one deals with a hexagon rather than an octagon (lower part of Figure~\ref{fig:root rotation}).
\end{proof}

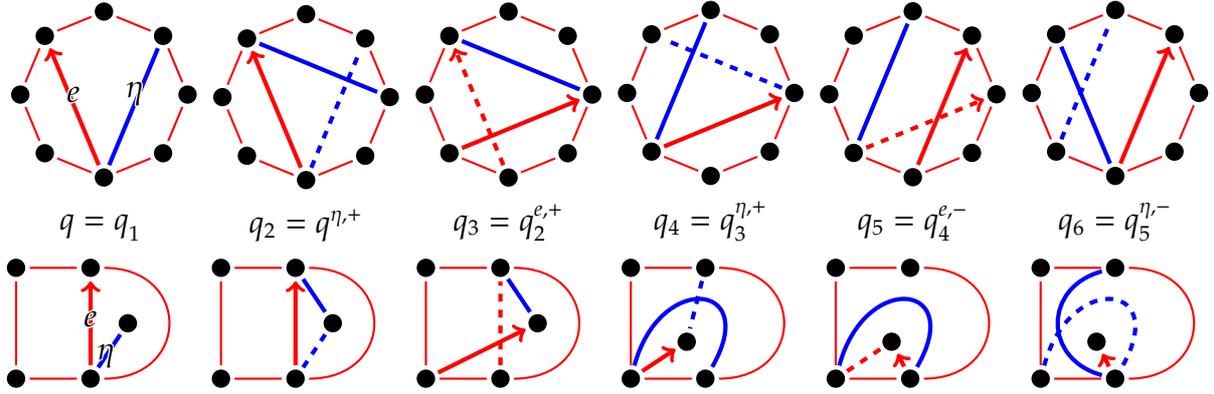
\begin{figure}\centering
\begin{tikzpicture}
\foreach \a in {0,1,...,7} {
\node[style=vertex] (\a) at (\a*45:1.1cm) {};}
\draw[style=map] (0) to (1) to (2) to (3) to (4) to (5) to (6) to (7) to (0);
\draw[ultra thick, red, ->] (6) to (3);
\draw[ultra thick, blue] (6) to (1);
\node (e1) at (-0.4cm,0) {\contour{white}{$e$}};
\node (e2) at (0.4cm,0) {\contour{white}{$\eta$}};
\node (d) at (0,-1.7cm) {$q=q_1^{\phantom{e_1}}$};
\end{tikzpicture}
\begin{tikzpicture}
\foreach \a in {0,1,...,7} {
\node[style=vertex] (\a) at (\a*45:1.1cm) {};}
\draw[style=map] (0) to (1) to (2) to (3) to (4) to (5) to (6) to (7) to (0);
\draw[ultra thick, red, ->] (6) to (3);
\draw[ultra thick, blue, dashed] (6) to (1);
\draw[ultra thick, blue] (0) to (3);
\node (d) at (0,-1.7cm) {$q_2=q^{\eta,+}$};
\end{tikzpicture}
\begin{tikzpicture}
\foreach \a in {0,1,...,7} {
\node[style=vertex] (\a) at (\a*45:1.1cm) {};}
\draw[style=map] (0) to (1) to (2) to (3) to (4) to (5) to (6) to (7) to (0);
\draw[ultra thick, red, ->, dashed] (6) to (3);
\draw[ultra thick, red, ->] (5) to (0);
\draw[ultra thick, blue] (0) to (3);
\node (d) at (0,-1.7cm) {$q_3=q_2^{e,+}$};
\end{tikzpicture}
\begin{tikzpicture}
\foreach \a in {0,1,...,7} {
\node[style=vertex] (\a) at (\a*45:1.1cm) {};}
\draw[style=map] (0) to (1) to (2) to (3) to (4) to (5) to (6) to (7) to (0);
\draw[ultra thick, red, ->] (5) to (0);
\draw[ultra thick, blue, dashed] (0) to (3);
\draw[ultra thick, blue] (5) to (2);
\node (d) at (0,-1.7cm) {$q_4=q_3^{\eta,+}$};
\end{tikzpicture}
\begin{tikzpicture}
\foreach \a in {0,1,...,7} {
\node[style=vertex] (\a) at (\a*45:1.1cm) {};}
\draw[style=map] (0) to (1) to (2) to (3) to (4) to (5) to (6) to (7) to (0);
\draw[ultra thick, red, ->, dashed] (5) to (0);
\draw[ultra thick, red, ->] (6) to (1);
\draw[ultra thick, blue] (5) to (2);
\node (d) at (0,-1.7cm) {$q_5=q_4^{e,-}$};
\end{tikzpicture}
\begin{tikzpicture}
\foreach \a in {0,1,...,7} {
\node[style=vertex] (\a) at (\a*45:1.1cm) {};}
\draw[style=map] (0) to (1) to (2) to (3) to (4) to (5) to (6) to (7) to (0);
\draw[ultra thick, red, ->] (6) to (1);
\draw[ultra thick, blue, dashed] (5) to (2);
\draw[ultra thick, blue] (6) to (3);
\node (d) at (0,-1.7cm) {$q_6=q_5^{\eta,-}$};
\end{tikzpicture}
\begin{tikzpicture}[scale=.98]
	\begin{pgfonlayer}{nodelayer}
		\node [style=vertex] (0) at (-8, -0) {};
		\node [style=vertex] (1) at (-8, 1.5) {};
		\node [style=vertex] (2) at (-7, -0) {};
		\node [style=vertex] (3) at (-7, 1.5) {};
		\node [style=vertex] (4) at (-6.5, 0.75) {};
		\node [style=vertex] (5) at (-5.25, -0) {};
		\node [style=vertex] (6) at (-4.25, 1.5) {};
		\node [style=vertex] (7) at (-5.25, 1.5) {};
		\node [style=vertex] (8) at (-4.25, -0) {};
		\node [style=vertex] (9) at (-3.75, 0.75) {};
		\node [style=vertex] (10) at (-1.5, -0) {};
		\node [style=vertex] (11) at (-1, 0.75) {};
		\node [style=vertex] (12) at (-1.5, 1.5) {};
		\node [style=vertex] (13) at (-2.5, -0) {};
		\node [style=vertex] (14) at (-2.5, 1.5) {};
		\node [style=vertex] (15) at (1.25, 1.5) {};
		\node [style=vertex] (16) at (0.25, -0) {};
		\node [style=vertex] (17) at (1.25, -0) {};
		\node [style=vertex] (18) at (0.25, 1.5) {};
		\node [style=vertex] (19) at (1, 0.5) {};
		\node [style=vertex] (20) at (3, -0) {};
		\node [style=vertex] (21) at (3, 1.5) {};
		\node [style=vertex] (22) at (4, 1.5) {};
		\node [style=vertex] (23) at (3.75, 0.5) {};
		\node [style=vertex] (24) at (4, -0) {};
		\node [style=vertex] (25) at (6.75, 1.5) {};
		\node [style=vertex] (26) at (5.75, 1.5) {};
		\node [style=vertex] (27) at (6.75, -0) {};
		\node [style=vertex] (28) at (5.75, -0) {};
		\node [style=vertex] (29) at (6.5, 0.5) {};
	\end{pgfonlayer}
	\begin{pgfonlayer}{edgelayer}
		\draw [style=map] (1) to (0);
		\draw [style=map] (0) to (2);
		\draw [style=map] (1) to (3);
		\draw [style=map, bend left=90, looseness=2.00] (3) to (2);
		\draw [ultra thick, red, ->] (2) to (3);
		\draw [ultra thick, blue] (2) to (4);
		\draw [style=map] (7) to (5);
		\draw [style=map] (5) to (8);
		\draw [style=map] (7) to (6);
		\draw [style=map, bend left=90, looseness=2.00] (6) to (8);
		\draw [ultra thick, red, ->] (8) to (6);
		\draw [ultra thick, dashed, blue] (8) to (9);
		\draw [ultra thick, blue] (6) to (9);
		\draw [style=map] (14) to (13);
		\draw [style=map] (13) to (10);
		\draw [style=map] (14) to (12);
		\draw [style=map, bend left=90, looseness=2.00] (12) to (10);
		\draw [ultra thick, dashed, red] (10) to (12);
		\draw [ultra thick, blue] (12) to (11);
		\draw [ultra thick, red, ->] (13) to (11);
		\draw [style=map] (18) to (16);
		\draw [style=map] (16) to (17);
		\draw [style=map] (18) to (15);
		\draw [style=map, bend left=90, looseness=2.00] (15) to (17);
		\draw [ultra thick, dashed, blue] (15) to (19);
		\draw [ultra thick, red, ->] (16) to (19);
		\draw [ultra thick, blue, in=60, out=75, looseness=3.25] (16) to (17);
		\draw [style=map] (21) to (20);
		\draw [style=map] (20) to (24);
		\draw [style=map] (21) to (22);
		\draw [style=map, bend left=90, looseness=2.00] (22) to (24);
		\draw [ultra thick, dashed, red] (20) to (23);
		\draw [ultra thick, blue, in=60, out=75, looseness=3.25] (20) to (24);
		\draw [ultra thick, red, ->] (24) to (23);
		\draw [style=map] (26) to (28);
		\draw [style=map] (28) to (27);
		\draw [style=map] (26) to (25);
		\draw [style=map, bend left=90, looseness=2.00] (25) to (27);
		\draw [ultra thick, dashed, blue, in=60, out=75, looseness=3.25] (28) to (27);
		\draw [ultra thick, red, ->] (27) to (29);
		\draw [ultra thick, blue, bend left=75, looseness=1.50] (27) to (25);
		\node (e1) at (-7cm,0.8cm) {\contour{white}{$e$}};
		\node (e2) at (-6.8cm,0.3cm) {\contour{white}{$\eta$}};
	\end{pgfonlayer}
\end{tikzpicture}
\caption{\label{fig:root rotation}The flip path $P(q,q')=(q_i,e_i,s_i)_{i=1}^5$ reroots $q$, rooted in $e$, in the edge $\eta$; above, the case where the union of faces adjacent to $e$ and $\eta$ has 8 corners. Below, the case where $\eta$ is a double edge within a degenerate face; the case where $e$ is a double edge is analogous.}
\end{figure}

We can now present the construction of the flip path corresponding to a colour change from $+$ to $-$ on an edge $(v,p(v))$, where $v$ is a leaf of a labelled tree $t$:

\begin{lemma}[Construction of the path, $+$ to $-$]\label{+->- construction} With the same notation as in Lemma~\ref{+->- static}, consider $t,v,\epsilon$ and the edges $e_1,\ldots, e_k$ of $\phi(t,\epsilon)$, all incident to a vertex $w$ (possibly equal to $\delta$). If the root edge of $\phi(t,\epsilon)$ does not belong to $\{e_1,\ldots,e_k\}$, then set $P_\epsilon(t,t^{v,-})=(q_i,e_i,-)_{i=1}^k$, where $q_1=\phi(t,\epsilon)$ and $q_{i+1}=q_i^{e_i,-}$. If some $e_j$ is the root of $\phi(t,\epsilon)$, then set $P_\epsilon(t,t^{v,-})$ to be the path described above, with the sequence of flips described in Lemma~\ref{root rotation} for the rotation of the root edge $e_j$ around its endpoint that is not $w$, injected right before the flip $(q_j,e_j,-)$. For simplicity, we shall not renumber the quadrangulations $q_1,\ldots,q_k$ in this case, but $q_j$ and $q_{j+1}$ will be 6 flips rather than one flip apart.

Then $q_k^{e_k,-}=\phi(t^{v,-},\epsilon)$.
\end{lemma}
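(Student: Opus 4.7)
The plan is to proceed by induction on $i\in\{1,\ldots,k\}$, showing that after the first $i$ flips the quadrangulation $q_{i+1}$ coincides with the map obtained from $\phi(t,\epsilon)$ by rerouting $e_1,\ldots,e_i$ to $v$ exactly as described in Lemma~\ref{+->- static}; then after all $k$ flips we obtain $\phi(t^{v,-},\epsilon)$.

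First I would analyse the local effect of a single counterclockwise flip of $e_i=(w,x_i)$ in $q_i$. Because $e_i$ is adjacent at $w$ to either $e'$ or $e_{i+1}$, the vertex $w$ still has degree at least $2$ in $q_i$, so $e_i$ is not a double edge; erasing it merges the two adjacent quadrangle faces into a hexagon, and the counterclockwise rotation then selects a specific new diagonal. The key geometric claim to prove inductively is that one of the two quadrangles bordering $e_i$ in $q_i$ contains the vertex $v$ on its boundary: for $i=1$ this face is the (possibly degenerate) face corresponding to the tree edge $(v,p(v))$, which by the Schaeffer construction lies immediately to one side of $e_1$; for $i>1$ this follows from the inductive hypothesis, since the previous flip placed $v$ on the boundary of the quadrangle that was originally between $e_{i-1}$ and $e_i$ around $w$. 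The hexagon then contains $v$, $w$ and $x_i$ among its six corners, and one checks that the ``one corner counterclockwise'' diagonal connects exactly $v$ to $x_i$, realising the rerouting of $e_i$.

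The final flip ($i=k$) plays a dual role: its adjacent quadrangles involve the already rerouted $e_{k-1}$ on one side, and the edge $e'$ together with the original edge from $v$ to $c_{l+1}$ on the other. I would verify that the resulting hexagon is large enough to accommodate both the new $v$-end for the rerouted $e_k$ and the endpoint $t(t(c_{l+1}))$ needed for $v$'s new outgoing edge, so that the single counterclockwise flip produces a diagonal implementing both modifications simultaneously; this is what matches the ``replacing the quadrangulation edge issued from $v$'' clause of Lemma~\ref{+->- static}. The root-edge case is then handled transparently: when some $e_j$ is the root, the rerooting subsequence from Lemma~\ref{root rotation} inserted just before the flip of $e_j$ temporarily moves the root onto a neighbouring edge that shares an endpoint away from $w$, so that flipping $e_j$ does not disrupt it; after the flip the root is identified with the rerouted $e_j$ and carries the orientation prescribed by $\phi(t^{v,-},\epsilon)$. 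The case $w=\delta$ is treated identically, with $\delta$ playing the role of $w$ throughout.

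The main obstacle is the planar bookkeeping: verifying at each step that the hexagon really has the structure claimed, that counterclockwise (rather than clockwise) is the correct direction to transfer the $w$-end of $e_i$ to $v$, and in particular that the very last flip cleanly combines two distinct edge modifications into a single diagonal insertion. This is where a careful case analysis of the planar embedding, and of the way successive rerouted edges lie around $w$, will need to be carried out, paying attention also to the degenerate-face situation when $v$'s edge is a double edge before the flipping sequence begins.
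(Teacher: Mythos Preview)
Your overall inductive strategy matches the paper's, but the inductive hypothesis you propose is wrong, and this leads to the impossible ``dual role'' claim for the final flip.

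The error is in what a single counterclockwise flip of $e_i$ actually produces.  Write $x_j$ for the endpoint of $e_j$ that is not $w$.  In $q_i$ the hexagon obtained by deleting $e_i$ has clockwise corner sequence of the form $w,\,u,\,v,\,x_i,\,u',\,x_{i+1}$: one of the two quadrangles is bounded by $\eta_{i-1}$ (which runs from $v$ to $x_i$, not to $x_{i-1}$) and $e_i$, the other by $e_i$ and $e_{i+1}$.  A counterclockwise rotation of the diagonal $w\text{--}x_i$ therefore lands on $v\text{--}x_{i+1}$, \emph{not} on $v\text{--}x_i$.  In other words, the flip of $e_i$ creates $\eta_i$, the edge joining $v$ to the non-$w$ endpoint of $e_{i+1}$ (with the convention $e_{k+1}=e'$).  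The identification between the $e_i$'s and the ``rerouted'' edges in Lemma~\ref{+->- static} is thus shifted by one: the original pendant edge from $v$ to $p(v)$ already present in $q_1$ plays the role of $\eta_0$ (the rerouted $e_1$, since $x_1=p(v)$), the flip of $e_1$ gives the rerouted $e_2$, and so on, until the flip of $e_k$ gives the replacement edge from $v$ to the vertex of $t(t(c))$.  No flip has to perform two modifications at once.

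This also affects your treatment of the root.  The root rotation of Lemma~\ref{root rotation} does not ``temporarily'' move the root so that it can be restored to the flipped $e_j$; it permanently reroots $q_j$ in the already-created edge $\eta_{j-1}$, which (under the shifted identification above) is precisely the edge that Lemma~\ref{+->- static} declares to be the root of $\phi(t^{v,-},\epsilon)$.  After that, the remaining flips never touch the root again.
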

\begin{proof}
	
	Consider the edges adjacent to $v$ in the quadrangulation $\phi(t^{v,-},\epsilon)$ and number them as $\eta_0,\ldots,\eta_k$, in clockwise order around $v$, so that $\eta_0$ is the (unique) edge of $\phi(t^{v,-},\epsilon)$ joining $v$ to $p(v)$. 
	
	We can show by induction that the quadrangulation $q_i$ is the quadrangulation $q_1$, where edges $e_1,\ldots,e_{i-1}$ have been replaced by edges $\eta_1,\ldots,\eta_{i-1}$ (and the natural edge identification after the flip sequence pairs $e_1$ with $\eta_1$, $e_2$ with $\eta_2$, and so on) -- see Figure~\ref{fig:P(t,t^{v,-})}. Indeed, if this is true for $q_i$, then the edge $e_i$ as seen in $q_i$ is adjacent to two faces, one in whose clockwise contour it's preceded by $\eta_{i-1}$, one in whose clockwise contour it's preceded by $e_{i+1}$ (where we set $e_{k+1}$ to be the edge between $t(c)$ and $t(t(c))$). It follows that flipping $e_i$ counterclockwise results in an edge joining the appropriate endpoint of $e_{i+1}$ to $v$, which does correspond to creating $\eta_i$. Since $\eta_0$ can already be identified with an edge of $q_1$, the final quadrangulation $q_k^{e_k,-}$ is $\phi(t^{v,-},\epsilon)$, up to rerooting.
	
	If some $e_j$ is the root edge of $q_1=\phi(t,\epsilon)$, then -- as described in Lemma~\ref{+->- static} -- $\eta_{j-1}$ is the root edge of $\phi(t^{v,-},\epsilon)$, oriented towards $v$ if and only if the root of $q_1$ is oriented towards $w$. We have shown that in $q_j$ edges $e_j$ and $\eta_{j-1}$ do belong to the same face, $e_j$ coming right after $\eta_{j-1}$ in its clockwise contour; performing the appropriate root rotation sequence of flips on $q_j$ has exactly the effect of rerooting $q_j$ in $\eta_{j-1}$ (with the desired orientation). After that is done, one can proceed with the ``normal'' flip sequence to obtain $q_k^{e_k,-}$, which is now rooted correctly.
\end{proof}

This concludes the description of our canonical flip paths corresponding to label changes of leaves: we simply set $P_\epsilon(t,t^{v,-})$, when $l(v)=l(p(v))$ in $t$, to be the concatenation of $P_\epsilon(t,t^{v,+})$ and $P_\epsilon(t^{v,+},(t^{v,+})^{v,-})$, while in general $P_\epsilon(t^{v,x},t)$ is the reverse path of $P_\epsilon(t,t^{v,x})$ (for $x\in\{+,-,=\}$).

In order to compare spectral gaps as we did in Section~\ref{chain comparison}, we need to estimate the maximum length of a flip path of the form $P_\epsilon(t,t^{v,x})$ and the number of paths involving any fixed quadrangulation edge flip; we do this via the following two lemmas.

\begin{lemma}\label{label move path length}
Given $t\in\LT_n$, a leaf $v$ of $t$ and $x\in\{+,-,=\}$, the length of the flip path $P_\epsilon(t,t^{v,x})$, for $\epsilon\in\{-1,1\}$, is at most $2n+6$.\end{lemma}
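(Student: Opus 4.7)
The plan is to go through the three admissible label changes $x \in \{+,-,=\}$ separately, applying the constructions from the preceding lemmas and using a crude degree bound to control the single nontrivial contribution.

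First I would dispense with the simple colour shifts between $=$ and $+$. By Lemma~\ref{=->+ construction}, if $l(v)=l(p(v))$ in $t$, then $\phi(t,\epsilon)$ and $\phi(t^{v,+},\epsilon)$ differ by a single edge flip, so $|P_\epsilon(t,t^{v,+})|=1$; the reverse paths are identical in length. This trivially satisfies the bound.

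Next I would handle a direct change between $+$ and $-$, that is, $P_\epsilon(t,t^{v,-})$ when $l(v)=l(p(v))+1$. By Lemma~\ref{+->- construction}, this path consists of the $k$ counterclockwise flips on $e_1,\ldots,e_k$, plus $5$ additional flips coming from the rerooting of Lemma~\ref{root rotation} in the (at most one) case when some $e_j$ is the root edge of $\phi(t,\epsilon)$. The key observation is that $e_1,\ldots,e_k$ are all incident to a common vertex (either $w$ or $\delta$), so $k$ is bounded by the degree of that vertex. Since $\phi(t,\epsilon)\in\sQ_n^\bullet$ has exactly $2n$ edges and is bipartite (no loops), every vertex has degree at most $2n$, giving $k\leq 2n$ and hence $|P_\epsilon(t,t^{v,-})|\leq 2n+5$. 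The reverse direction has the same length.

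Finally, for a change between $=$ and $-$, the path is defined as the concatenation of $P_\epsilon(t,t^{v,+})$ (of length $1$) with $P_\epsilon(t^{v,+},(t^{v,+})^{v,-})$, the second of which falls under the previous case and has length at most $2n+5$. Adding gives the bound $2n+6$, and reverse paths are handled by symmetry. I don't anticipate a genuine obstacle here: the only substantive input is the degree bound $k\leq 2n$, and the constant $6$ in the statement is precisely the $5$ extra flips from a possible root rotation plus the $1$ flip from the $=\!\to\!+$ leg of the concatenation.
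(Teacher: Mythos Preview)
Your proof is correct and follows essentially the same route as the paper's: both handle the $=\!\leftrightarrow\!+$ cases via the single-flip Lemma~\ref{=->+ construction}, bound the $+\!\to\!-$ path by $2n+5$ using that the flipped edges $e_1,\ldots,e_k$ are all incident to a common vertex (so $k\le 2n$) plus at most five root-rotation flips, and then obtain $2n+6$ for the $=\!\to\!-$ concatenation. The paper phrases the $k\le 2n$ bound primarily as ``no edge is flipped twice'' and mentions the degree bound only parenthetically, but the argument is the same.
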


\begin{proof}
The flip sequence $P_\epsilon(t,t^{v,-})$, if we ignore the possible root rotating subsequence, does not flip ``the same edge'' twice, so it has length at most $2n$ (in fact, it has length at most the maximum degree of a vertex in $\phi(t,\epsilon)$, since all flipped edges are adjacent to the same vertex). The root rotating subsequence has length 5 and the path $P_\epsilon(t,t^{v,+})$ has length 1, hence the above estimate.
\end{proof}

\begin{figure}[t]
\centering\begin{tabularx}{.9\textwidth}{>{\centering\arraybackslash}X>{\centering\arraybackslash}X>{\centering\arraybackslash}X}
 & & \\
\begin{tikzpicture} 
	\begin{pgfonlayer}{nodelayer}
		\node [style=vertex, label=left:$a$] (0) at (-1, -0) {};
		\node [style=vertex, fill=red, draw=pink, label=above:$v$,label=right:$a+1$] (1) at (-0.5, 0.75) {};
		\node [style=vertex, label=$a$] (2) at (-0.7, 2.5) {};
		\node [style=vertex, label=$a$] (3) at (0.25, 3) {};
		\node [style=vertex, label=$a$] (4) at (1.75, 2.75) {};
		\node [style=vertex, label=below:$a-1$] (5) at (0.95, 0.25) {};
		\node [style=vertex, label=above right:$a-2$] (6) at (2.5, -0) {};
		
		\node (7) at (-0.2, 1.55) {\contour{white}{$e_1$}};
		\node (8) at (-0, 2.1) {\contour{white}{$e_2$}};
		\node (9) at (0.7, 2.25) {\contour{white}{$e_3$}};
		\node (10) at (1.45, 1.75) {\contour{white}{$e_4$}};
	\end{pgfonlayer}
	\begin{pgfonlayer}{edgelayer}
		\draw [style=tree] (0) to (1);
		\draw [style=map, bend left, looseness=1] (2) to (5); 
		\draw [style=map, bend right=15, looseness=1.00] (5) to (3); 
		\draw [style=map, very thick, ->] (4) to (5); 
		\draw [style=map, bend left=90, looseness=2.25] (0) to (5); 
		\draw [style=map, ultra thick, bend left=15, looseness=1.00] (5) to (6);
		\draw [style=map, bend left, looseness=1.00] (1) to (0);
		\draw [style=map, ultra thick] (0) to (5);
	\end{pgfonlayer}
	\pgfresetboundingbox
	\draw[draw=none, use as bounding box] (-1,-0.5) rectangle (3,3.5);
\end{tikzpicture} &
\begin{tikzpicture} 
	\begin{pgfonlayer}{nodelayer}
		\node [style=vertex, label=left:$a$] (0) at (-1, -0) {};
		\node [style=vertex, fill=red, draw=pink, label=above:$v$,label=right:$$] (1) at (-0.5, 0.75) {};
		\node [style=vertex, label=$a$] (2) at (-0.7, 2.5) {};
		\node [style=vertex, label=$a$] (3) at (0.25, 3) {};
		\node [style=vertex, label=$a$] (4) at (1.75, 2.75) {};
		\node [style=vertex, label=below:$a-1$] (5) at (0.95, 0.25) {};
		\node [style=vertex, label=above right:$a-2$] (6) at (2.5, -0) {};
		
		\node (7) at (-0.3, 1.75) {\contour{white}{$\eta_1$}};
		\node (8) at (-0, 2.1) {\contour{white}{$e_2$}};
		\node (9) at (0.7, 2.25) {\contour{white}{$e_3$}};
		\node (10) at (1.45, 1.75) {\contour{white}{$e_4$}};
	\end{pgfonlayer}
	\begin{pgfonlayer}{edgelayer}
		\draw [style=tree] (0) to (1);
		\draw [blue, dashed, ultra thick, bend left=90, looseness=2] (0) to (5); 
		\draw [style=map, bend left, looseness=1] (2) to (5); 
		\draw [style=map, bend right=15, looseness=1.00] (5) to (3); 
		\draw [style=map, very thick, ->] (4) to (5); 
		\draw [blue, ultra thick, bend left, looseness=1] (2) to (1); 
		\draw [style=map, ultra thick, bend left=15, looseness=1.00] (5) to (6);
		\draw [style=map, bend right, looseness=1.00] (1) to (0);
		\draw [style=map, ultra thick] (0) to (5);
	\end{pgfonlayer}
		\pgfresetboundingbox
	\draw[draw=none, use as bounding box] (-1,-0.5) rectangle (3,3.5);
\end{tikzpicture} &
\begin{tikzpicture} 
	\begin{pgfonlayer}{nodelayer}
		\node [style=vertex, label=left:$a$] (0) at (-1, -0) {};
		\node [style=vertex, fill=red, draw=pink, label=above:$v$,label=right:$$] (1) at (-0.5, 0.75) {};
		\node [style=vertex, label=$a$] (2) at (-0.7, 2.5) {};
		\node [style=vertex, label=$a$] (3) at (0.25, 3) {};
		\node [style=vertex, label=$a$] (4) at (1.75, 2.75) {};
		\node [style=vertex, label=below:$a-1$] (5) at (0.95, 0.25) {};
		\node [style=vertex, label=above right:$a-2$] (6) at (2.5, -0) {};
		
		\node (7) at (-0.3, 1.75) {\contour{white}{$\eta_1$}};
		\node (8) at (0.2, 2.1) {\contour{white}{$\eta_2$}};
		\node (9) at (0.7, 2.25) {\contour{white}{$e_3$}};
		\node (10) at (1.45, 1.75) {\contour{white}{$e_4$}};
	\end{pgfonlayer}
	\begin{pgfonlayer}{edgelayer}
		\draw [style=tree] (0) to (1);
		\draw [ultra thick, blue, dashed, bend left, looseness=1] (2) to (5); 
		\draw [blue, ultra thick, bend right=15, looseness=1.00] (1) to (3); 
		\draw [style=map, bend right=15, looseness=1.00] (5) to (3); 
		\draw [style=map, very thick, ->] (4) to (5); 
		\draw [style=map, bend left, looseness=1] (2) to (1); 
		\draw [style=map, ultra thick, bend left=15, looseness=1.00] (5) to (6);
		\draw [style=map, bend right, looseness=1.00] (1) to (0);
		\draw [style=map, ultra thick] (0) to (5);
	\end{pgfonlayer}
		\pgfresetboundingbox
	\draw[draw=none, use as bounding box] (-1,-0.5) rectangle (3,3.5);
\end{tikzpicture}\\
$\phi(t,\epsilon)$ & $(q_1,q_1^{e_1,-})$ & $(q_2,q_2^{e_2,-})$\\
 & & \\
\begin{tikzpicture} 
	\begin{pgfonlayer}{nodelayer}
	\draw[use as bounding box, white] (-1,-0.5) rectangle (2.5,3.5);
		\node [style=vertex, label=left:$a$] (0) at (-1, -0) {};
		\node [style=vertex, fill=red, draw=pink, label=above:$v$,label=right:$$] (1) at (-0.5, 0.75) {};
		\node [style=vertex, label=$a$] (2) at (-0.7, 2.5) {};
		\node [style=vertex, label=$a$] (3) at (0.25, 3) {};
		\node [style=vertex, label=$a$] (4) at (1.75, 2.75) {};
		\node [style=vertex, label=below:$a-1$] (5) at (0.95, 0.25) {};
		\node [style=vertex, label=above right:$a-2$] (6) at (2.5, -0) {};
		
		\node (7) at (-0.3, 1.75) {\contour{white}{$\eta_1$}};
		\node (8) at (0.2, 2.1) {\contour{white}{$\eta_2$}};
		\node (10) at (1.45, 1.75) {\contour{white}{$e_4$}};
		\node (9) at (1.2, 2.2) {\contour{white}{$\eta_3$}};
		
		\node (9) at (3.75, -0.9) {\fbox{\contour{white}{$P(q_3^{e_3,-},q_4)$}}};
	\end{pgfonlayer}
	\begin{pgfonlayer}{edgelayer}
	\draw[dashed] (3.9,3.5)--(3.9,-0.5);
		\draw [style=tree] (0) to (1);
		\draw [style=map, bend right=15, looseness=1.00] (1) to (3); 
		\draw [blue, dashed, ultra thick, bend right=15, looseness=1.00] (5) to (3); 
		\draw [style=map, very thick, ->] (4) to (5); 
		\draw [style=map, bend left, looseness=1] (2) to (1); 
		\draw [blue, ultra thick] (4) to (1); 
		\draw [style=map, ultra thick, bend left=15, looseness=1.00] (5) to (6);
		\draw [style=map, bend right, looseness=1.00] (1) to (0);
		\draw [style=map, ultra thick] (0) to (5);
	\end{pgfonlayer}
	\pgfresetboundingbox
	\draw[draw=none, use as bounding box] (-1,-0.5) rectangle (3,3.5);
\end{tikzpicture}&
\begin{tikzpicture} 
	\begin{pgfonlayer}{nodelayer}
		\node [style=vertex, label=left:$a$] (0) at (-1, -0) {};
		\node [style=vertex, fill=red, draw=pink, label=above:$v$,label=right:$$] (1) at (-0.5, 0.75) {};
		\node [style=vertex, label=$a$] (2) at (-0.7, 2.5) {};
		\node [style=vertex, label=$a$] (3) at (0.25, 3) {};
		\node [style=vertex, label=$a$] (4) at (1.75, 2.75) {};
		\node [style=vertex, label=below:$a-1$] (5) at (0.95, 0.25) {};
		\node [style=vertex, label=above right:$a-2$] (6) at (2.5, -0) {};
		
		\node (7) at (-0.3, 1.75) {\contour{white}{$\eta_1$}};
		\node (8) at (0.2, 2.1) {\contour{white}{$\eta_2$}};
		\node (9) at (1.2, 2.2) {\contour{white}{$\eta_3$}};
		\node (10) at (1.45, 0.75) {\contour{white}{$\eta_4$}};
	\end{pgfonlayer}
	\begin{pgfonlayer}{edgelayer}
		\draw [style=tree] (0) to (1);
		\draw [style=map, bend right=15, looseness=1.00] (1) to (3); 
		\draw [ultra thick, blue, dashed] (4) to (5); 
		\draw [style=map, bend left, looseness=1] (2) to (1); 
		\draw [style=map, ultra thick, bend left=15, looseness=1.00] (5) to (6);
		\draw [style=map, bend right, looseness=1.00] (1) to (0);
		\draw [style=map, ultra thick] (0) to (5);
		\draw [ultra thick, blue, looseness=0.6] (1) to (6); 
		\draw [style=map, very thick, ->] (4) to (1); 
	\end{pgfonlayer}
		\pgfresetboundingbox
	\draw[draw=none, use as bounding box] (-1,-0.5) rectangle (3,3.5);
\end{tikzpicture}&
\begin{tikzpicture}
	\begin{pgfonlayer}{nodelayer}
		\node [style=vertex, label=left:$a$] (0) at (-1, -0) {};
		\node [style=vertex, fill=red, draw=pink, label=above left:$v$,label=right:$a-1$] (1) at (-0.5, 0.75) {};
		\node [style=vertex, label=$a$] (2) at (-0.7, 2.5) {};
		\node [style=vertex, label=$a$] (3) at (0.25, 3) {};
		\node [style=vertex, label=$a$] (4) at (1.75, 2.75) {};
		\node [style=vertex, label=below:$a-1$] (5) at (0.95, 0.25) {};
		\node [style=vertex, label=above right:$a-2$] (6) at (2.5, -0) {};
		
		\node (7) at (-1, 0.45) {\contour{white}{$\eta_0$}};
		\node (7) at (-0.3, 1.75) {\contour{white}{$\eta_1$}};
		\node (8) at (0.2, 2.1) {\contour{white}{$\eta_2$}};
		\node (9) at (1.2, 2.2) {\contour{white}{$\eta_3$}};
		\node (10) at (1.45, 0.75) {\contour{white}{$\eta_4$}};
	\end{pgfonlayer}
	\begin{pgfonlayer}{edgelayer}
		\draw [style=tree] (0) to (1);
		\draw [style=map, bend left, looseness=1] (2) to (1);
		\draw [style=map, bend right=15, looseness=1.00] (1) to (3);
		\draw [style=map, very thick, ->] (4) to (1);
		\draw [style=map, looseness=0.6] (1) to (6);
		\draw [style=map, ultra thick, bend left=15, looseness=1.00] (5) to (6);
		\draw [style=map, bend right, looseness=1.00] (1) to (0);
		\draw [style=map, ultra thick] (0) to (5);
	\end{pgfonlayer}
		\pgfresetboundingbox
	\draw[draw=none, use as bounding box] (-1,-0.5) rectangle (3,3.5);
\end{tikzpicture}\\
$(q_3,q_3^{e_3,-})$ & $(q_4,q_4^{e_4,-})$ & $\phi(t^{v,-},\epsilon)$ \\
\end{tabularx}
\caption{\label{fig:P(t,t^{v,-})}The flip path $P_\epsilon(t,t^{v,-})$. Since $q_1$ is rooted in $e_4$, a root rotating sequence is inserted right before flipping $e_4$, thus correctly rooting the final quadrangulation in $\eta_3$, i.e.~the flipped image of $e_3$.}
\end{figure}
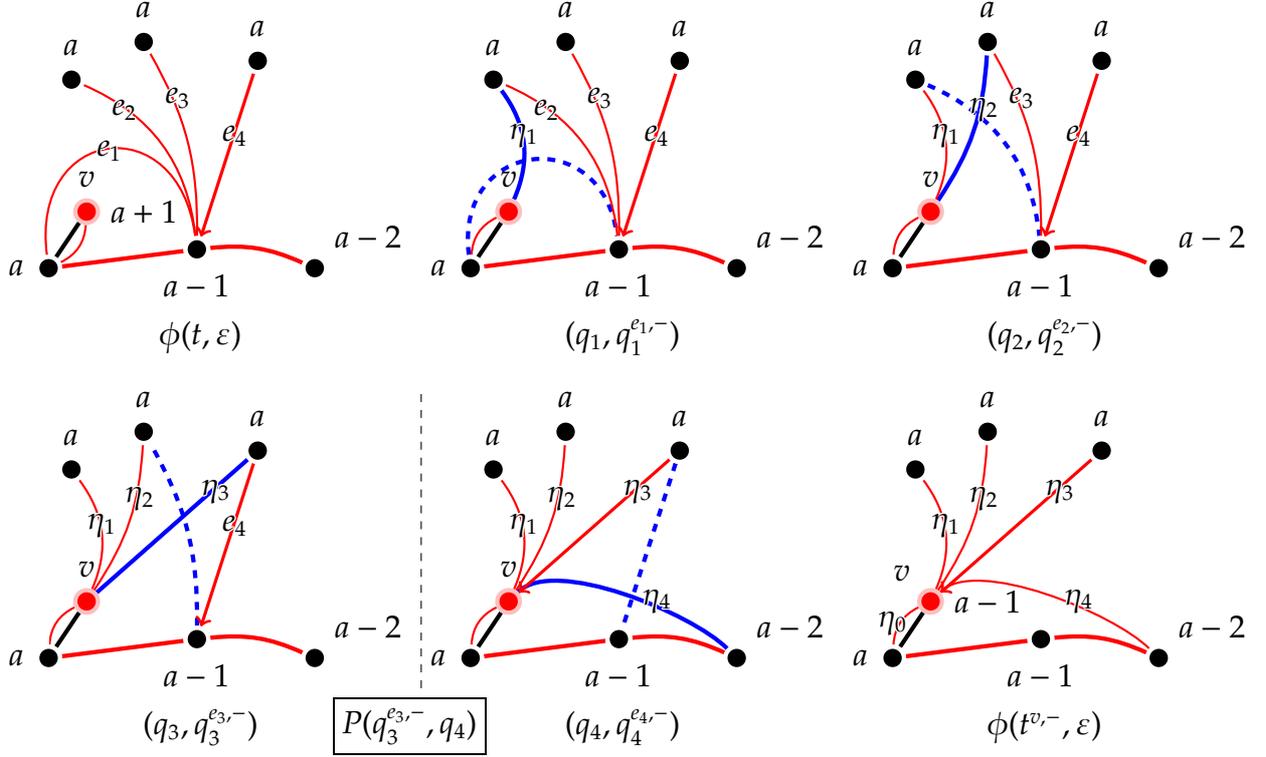

\begin{lemma}\label{label move congestion}
	Let $(q,e,s)$ be a triple with $q\in\sQ_n^\bullet$, $e\in E(q)$, $s\in\{+,-\}$; then there is a constant $C$ such that there are at most $C$ quadruples $(t,v,x,\epsilon)$, where $t\in\LT_n$, $v$ is a leaf in $t$, $x\in \{+,-,=\}$ and $\epsilon=\pm1$, for which $(q,e,s)$ appears in the flip path $P_\epsilon(t,t^{v,x})$.
\end{lemma}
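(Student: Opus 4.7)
The plan is to fix the triple $(q,e,s)$ and to partition the quadruples $(t,v,x,\epsilon)$ for which $(q,e,s)$ appears in $P_\epsilon(t,t^{v,x})$ according to the type of path and, within a long path, the position of the flip. Each path $P_\epsilon(t,t^{v,x})$ is either length-one (Lemma~\ref{=->+ construction}), a long $+\to -$ (or $-\to +$) path from Lemma~\ref{+->- construction} with a possible root-rotation insertion (Lemma~\ref{root rotation}), or a concatenation of the two. It will suffice to bound the contribution of each class by an absolute constant.

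For length-one paths, $(q,e,s)$ is the entire path, so $\phi^{-1}(q)$ yields either $(t,\epsilon)$ (if $s=+$) or $(t^{v,+},\epsilon)$ (if $s=-$); the edge $e$ must then be a double edge inside a degenerate face of the corresponding quadrangulation, and $v$ is forced to be its unique degree-one endpoint, yielding only a bounded number of quadruples. For non-rotation flips $(q,e,s)=(q_i,e_i,-)$ inside a long path $P_\epsilon(t,t^{v,-})$ with $l(v)=l(p(v))+1$, I will perform the flip to identify the newly drawn edge $\eta$ in $q^{e,-}$; by the construction of Lemma~\ref{+->- construction} the leaf $v$ must be one of the two endpoints of $\eta$, and the rotation centre $w$ must be one of the two endpoints of $e$ in $q$. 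This leaves at most four pairs $(v,w)$ to analyse. For each valid pair, the Schaeffer edge $\eta_0$ from $v$'s corner is preserved by all non-rotation flips and is identifiable in $q=q_i$, while the previously rerouted edges $\eta_1,\ldots,\eta_{i-1}$ appear as a contiguous angular block of edges at $v$ in $q$, bordered by $\eta_0$. Reading off the size of this block recovers $i$, and reversing (clockwise-flipping) these edges in reverse order recovers $q_1=\phi(t,\epsilon)$, so $(t,v,\epsilon)$ is uniquely determined.

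The root-rotation flips embedded in the five-flip subsequences of Lemma~\ref{root rotation} will be handled by a direct case analysis analogous to that in the proof of Lemma~\ref{root reversal estimates}: a given $(q,e,s)$ can appear as a rotation flip in $O(1)$ distinct paths. Concatenation paths (for $=\to -$ and $-\to =$ moves) break into a length-one flip followed by a long path, so each constituent flip falls into one of the preceding cases. Summing the bounded number of cases with the constant-per-case contribution then yields the claim. The main technical point is the justification that the rerouted edges $\eta_1,\ldots,\eta_{i-1}$ occupy consecutive corners of $v$ in $q_i$: this requires analysing the hexagonal face produced by each counterclockwise flip to verify that its new edge enters $v$ at a corner immediately adjacent to that of the previously inserted edge.
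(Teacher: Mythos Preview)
Your proposal is correct and follows essentially the same approach as the paper's proof: identify $v$ as an endpoint of the flipped edge in $q^{e,s}$, use the degree of $v$ (equivalently, the size of the angular block of already-rerouted edges) to recover the position $i$ in the path, then undo the flips to reconstruct $\phi(t,\epsilon)$, with a separate local case analysis for flips lying inside a root-rotation subsequence. The paper's write-up is slightly more streamlined --- it does not introduce $w$ explicitly (once $v$ is fixed, the clockwise ordering of the $\eta_j$'s is pinned down by where the flipped edge $e$ lands among them, so $w$ is not needed) and it handles the rotation sub-case by noting that the five rotation flips act within at most three faces adjacent to $e$, which bounds the number of possible completions $q'$ --- but the substance is the same.
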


\begin{proof}
Let us first consider quadruples of the form $(t,v,-,\epsilon)$.

Suppose $t,v$ are such that $l(v)-l(p(v))=1$ and $(q,e,s)$ appears in $P_\epsilon(t,t^{v,-})=(q_i,e_i,s_i)_{i=1}^k$ (that is, $q=q_l$, $e=e_l$ and $s=s_l$ for some $l\in\{1,\ldots,k\}$); we first consider the case where no root rotation sequence appears in $P_\epsilon(t,t^{v,-})$. Let $v_1$ and $v_2$ be the endpoints of $e$ in $q^{e,s}$; then $v$ corresponds to one of these two vertices in $q_1=\phi(t,\epsilon)$, since every edge $e_i$ is adjacent to $v$ in $q_i^{e_i,-}$ and subsequent quadrangulations in the flip path. In particular, since the degree of $v$ in $q_1$ is 1 and it increases by 1 with each flip in $P_\epsilon(t, t^{v,-})$, if $d_1$ and $d_2$ are the respective degrees of $v_1$ and $v_2$ in $q$, then $l=d_1$ or $l=d_2$, according to whether $v$ is $v_1$ or $v_2$. Suppose $v$ is $v_1$; let $\eta_0,\ldots,\eta_{d_1-1}$ be the edges incident to $v_1$ in $q$, in clockwise order around $v_1$, numbered so that $e$ will end up in between $\eta_{d_1-1}$ and $\eta_0$ in $q^{e,s}$. Then $q_1=\phi(t,\epsilon)=((q^{\eta_{d_1-1},+})^{\eta_2,+})^{\ldots})^{\eta_1,+}$, since $e_i$ corresponds to the edge $\eta_{d_{i-1}}$ in $q$. We therefore have only 2 possibilities for $(t,v)$.

Now suppose a root rotation sequence does appear in $P_\epsilon(t,t^{v,-})$; if it appears strictly after $(q,e,s)$, then the reasoning above is still valid. If it appears strictly before, then the root of $q$ is some $\eta_j$, and -- reasoning as before -- the quadrangulation $q_1$ is recovered by inserting the reverse of a root rotation sequence right before applying the clockwise flip of $\eta_j$ (or right at the end if $j=0$). We are left to deal with the case where $(q,e,s)$ actually belongs to a root rotation sequence. Notice that the number of possibilities for the quadrangulation $q'$ obtained at the end of the root rotation sequence is bounded by a constant independent of $n$, since the sequence only acts within (at most) three adjacent faces, two of which are adjacent to $e$. Having established some $q'$ to be the quadrangulation in question, the vertex $v$ must be one of the endpoints of the root edge $e'$ of $q'$. As before, we can now reconstruct $q_1$ by labelling $\eta_0,\ldots,\eta_{d_{i-1}}$ the edges incident to this endpoint and performing the appropriate reverse root rotation sequence, followed by clockwise flips on $\eta_{d_{i-1}},\ldots,\eta_1$.

Similarly, suppose $t,v$ are such that $l(v)=l(p(v))$ and $(q,e,s)$ appears in $P_\epsilon(t,t^{v,-})$; if $(q,e,s)$ appears in $P_\epsilon(t^{v,+},t^{v,-})$, then $\phi(t^{v,+},\epsilon)$ can be reconstructed as above, hence $t=(t^{v,+})^{v,=}$. Otherwise we have $(q,e,s)=P_\epsilon(t,t^{v,+})$, hence $q=\phi(t,\epsilon)$.

Now, since $(q,e,-)$ appears in $P_\epsilon(t,t^{v,-})$ if and only if $(q^{e,-}, e, +)$ appears in $P_\epsilon(t^{v,-}, t)$, we have thus also covered the cases where $(t,v,x)$ is such that $l(v)=l(p(v))-1$, which correspond to another 4 possibilities.

Finally, the missing cases ($x\in\{+,=\}$ and $l(v)-l(p(v))\in\{1,0\}$) are completely straightforward, since they correspond to a flip path of length 1, and therefore imply that $(t,\epsilon)=\phi^{-1}(q)$ and $v$ is the one vertex whose label is changed by the flip.

This results, indeed, in a number of possibilities for the quadruple $(t,v,x,\epsilon)$ that is bounded independently of $n$.	
\end{proof}

\subsection{The leaf translation sequences $P_\epsilon(t,t^{v,\rightarrow})$, $P_\epsilon(t,t^{v,\leftarrow})$}\label{section: leaf translation}
The other type of ``move'' we wish to emulate via quadrangulation flips is the translation of a leaf left or right in the contour of the tree. In doing this, we may suppose the leaf has the same label as its parent, and deal with all other cases by prefixing and appending flip paths of the type $P_\epsilon(t,t^{v,=})$ and $P_\epsilon(t^{v,=},t)$, which we have constructed in the previous section.

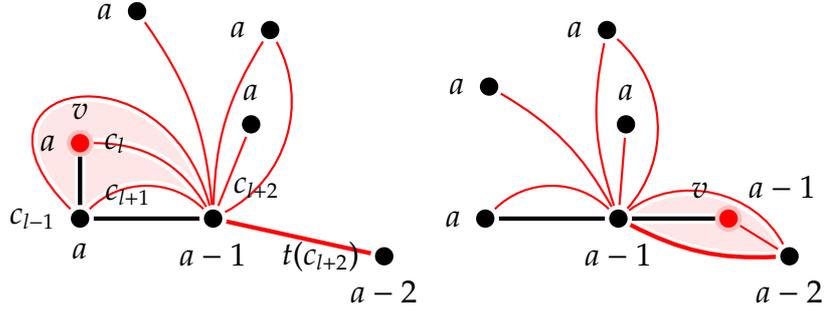
\begin{figure}\centering
\begin{tikzpicture}
	\begin{pgfonlayer}{nodelayer}
		\node [style=vertex, label=below:$a$, label=left:$c_{l-1}$, label={[shift={(18pt,-4pt)}]$c_{l+1}$}] (0) at (-2, -0) {};
		\node [style=vertex, fill=red, draw=pink, label=left:$a$, label=above:$v$,label=right:$c_l$] (1) at (-2, 1) {};
		\node [style=vertex, label=below:$a-1$, label=above right:$c_{l+2}$] (2) at (-0.25, -0) {};
		\node [style=vertex, label=left:$a$] (3) at (-1.25, 2.75) {};
		\node [style=vertex, label=left:$a$] (4) at (0.5, 2.5) {};
		\node [style=vertex, label=below:$a-2$, label=left:$t(c_{l+2})$] (5) at (2, -0.5) {};
		\node [style=vertex, label=above:$a$] (6) at (0.25, 1.25) {};
	\end{pgfonlayer}
	\begin{pgfonlayer}{edgelayer}
		\fill [red!10] (0) [in=105, out=135, looseness=3.25] to (2) [bend right=45, looseness=1.00] to (0);
			
		\draw [style=b, bend left, looseness=1.00] (1) to (2);
		\draw [style=b] (1) to (0);
		\draw [style=b, in=105, out=135, looseness=3.25] (0) to (2);
		\draw [style=b, bend left=45, looseness=1.00] (0) to (2);
		
		\draw [style=map, bend left, looseness=1.00] (1) to (2);
		\draw [style=map, bend left=15, looseness=1.00] (3) to (2);
		\draw [style=map, bend right=45, looseness=1.00] (2) to (4);
		\draw [style=map, ultra thick] (5) to (2);
		\draw [style=tree] (1) to (0);
		\draw [style=tree] (0) to (2);
		\draw [style=map, in=105, out=135, looseness=3.25] (0) to (2);
		\draw [style=map, bend left=45, looseness=1.00] (0) to (2);
		\draw [style=map, bend right=15, looseness=1.00] (4) to (2);
		\draw [style=map] (6) to (2);
	\end{pgfonlayer}
\end{tikzpicture}
\begin{tikzpicture}
	\begin{pgfonlayer}{nodelayer}
		\node [style=vertex, label=left:$a$] (0) at (-2, -0) {};
		\node [style=vertex, fill=red, draw=pink, label=above right:$a-1$, label=above left:$v$] (1) at (1.2, 0) {};
		\node [style=vertex, label=below:$a-1$] (2) at (-0.25, -0) {};
		\node [style=vertex, label=left:$a$] (3) at (-1.95, 1.75) {};
		\node [style=vertex, label=left:$a$] (4) at (-0.4, 2.5) {};
		\node [style=vertex, label=below:$a-2$] (5) at (2, -0.5) {};
		\node [style=vertex, label=above:$a$] (6) at (-0.15, 1.25) {};
	\end{pgfonlayer}
	\begin{pgfonlayer}{edgelayer}
		\fill [red!10] (2) [bend left=50, looseness=1.00] to (5.center) [bend left=15, looseness=1.00] to (2);
		
		\draw [style=b] (1) to (2);
		\draw [style=b, bend left=50, looseness=1.00] (2) to (5);
		\draw [style=b, bend left=15] (5) to (2);
		
		\draw [style=map] (1) to (5);
		\draw [style=map, bend left=15, looseness=1.00] (3) to (2);
		\draw [style=map, bend right=45, looseness=1.00] (2) to (4);
		\draw [style=map, ultra thick, bend left=15] (5) to (2);
		\draw [style=tree] (1) to (2);
		\draw [style=tree] (0) to (2);
		\draw [style=map, bend left=50, looseness=1.00] (2) to (5);
		\draw [style=map, bend left=45, looseness=1.00] (0) to (2);
		\draw [style=map, bend right=15, looseness=1.00] (4) to (2);
		\draw [style=map] (6) to (2);
	\end{pgfonlayer}
\end{tikzpicture}
\caption{\label{fig:leaf translation static}The quadrangulations $\phi(t,\epsilon)$ and $\phi(t^{v,\rightarrow},\epsilon)$, drawn in a case where $l(c_{l+2})=l(v)-1$.}
\end{figure}

The description of $\phi(t^{v,\rightarrow},\epsilon)$ in terms of $\phi(t,\epsilon)$ is rather simple and depicted in Figure~\ref{fig:leaf translation static}:
\begin{lemma}\label{leaf translation static}
	Consider $(t,\epsilon)\in\LT_n\times\{-1,1\}$ and let $v$ be a leaf of $t$ such that $l(v)=l(p(v))$. Let $c_1,\ldots,c_{2n}$ be the clockwise contour of $t$ and suppose $c_l$, with $2\leq l\leq 2n-1$, is the corner of $v$ (notice that, if we had $l=2n$, we would have $t^{v,\rightarrow}=t$). Then $\phi(t^{v,\rightarrow},\epsilon)$ can be obtained from $\phi(t,\epsilon)$ by
	\begin{itemize}
	\item identifying the two edges $(c_{l-1}, t(c_{l-1}))$ and $(c_{l+1},t(c_{l+1}))$ and erasing the double edge $(c_l,c_{l+1})$ (that is eliminating the one degenerate face of $\phi(t,\epsilon)$ which corresponds to the tree edge $(p(v),v)$);
	\item replacing the edge $(c_{l+2},t(c_{l+2}))$ by a degenerate face whose internal vertex is adjacent to the vertex of $t(c_{l+2})$.
	\end{itemize}
	Notice that it is possible that either the edge $(c_{l-1},t(c_{l-1}))$ (if $l=2$) or the edge $(c_{l+2},t(c_{l+2})))$ (if $l=2n-1$) is the root edge of $\phi(t,\epsilon)$. In the former case, the root of $\phi(t^{v,\rightarrow},\epsilon)$ is the edge obtained from identifying $(c_{l-1}, t(c_{l-1}))$ and $(c_{l+1},t(c_{l+1}))$, oriented as before; in the latter, it is the second edge of the new degenerate face in clockwise order around the vertex of $c_{l+2}$, oriented as $(c_{l+2},t(c_{l+2}))$ was.
\end{lemma}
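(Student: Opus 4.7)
The plan is to verify the statement directly from the Schaeffer construction recalled in Section~\ref{section: Schaeffer}, comparing the edge system produced on $t$ with the one produced on $t^{v,\rightarrow}$. First I would set up the two contours: removing $v$ merges $c_{l-1}$ and $c_{l+1}$ into a single corner $c^*$ of $p(v)$, and reattaching a new leaf $v'$ at $c_{l+2}$ splits that corner into $c_{l+2}^-, c_{l+2}^+$ with the single corner $c'$ of $v'$ sandwiched in between. All labels outside the affected region are preserved, and because $l(v)=l(p(v))$ the edge $(v,p(v))$ has ``colour'' $=$; hence $(v',p(v'))$ has the same colour in $t^{v,\rightarrow}$ and we have $l(c')=l(c_{l+2})$ as well as $l(c^*)=l(c_{l-1})$.

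The key observation is that in $t$ the three corners $c_{l-1},c_l,c_{l+1}$ all share a single target, namely the first corner of label $l(v)-1$ appearing strictly after $c_{l-1}$, which must lie at or beyond $c_{l+2}$ since $l(c_l)=l(c_{l+1})=l(v)$. This common target is exactly why $\phi(t,\epsilon)$ contains a degenerate face attached to the edge $(p(v),v)$, with $(c_l,t(c_l))$ as its double edge. In $t^{v,\rightarrow}$ the same reasoning forces $c^*,c_{l+2}^-,c',c_{l+2}^+$ to share a single target, namely the first corner of label $l(c_{l+2})-1$ strictly after $c_{l+2}^+$, so the four edges issued from these corners form a new degenerate face at the position of the former edge $(c_{l+2},t(c_{l+2}))$. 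For any corner outside $\{c_{l-1},c_l,c_{l+1},c_{l+2}\}$, the contour is unchanged in its neighbourhood and the labels relevant to computing its target are preserved, so its target is the same in both trees. Putting the two comparisons together one reads off exactly the pair of local operations described in the statement: the old degenerate face collapses (identifying $(c_{l-1},t(c_{l-1}))$ with $(c_{l+1},t(c_{l+1}))$), while a new degenerate face appears around $v'$.

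Handling the root is then a matter of unpacking the rerooting convention in the definition of $t^{v,\rightarrow}$ and applying Theorem~\ref{Schaeffer}. If the root edge of $\phi(t,\epsilon)$ comes from a corner outside the affected region, it is preserved untouched. In the case $l=2$ the root corner of $t^{v,\rightarrow}$ is $c^*$, and the two edges $(c_{l-1},t(c_{l-1}))$ and $(c_{l+1},t(c_{l+1}))$ of $\phi(t,\epsilon)$ are identified into the single edge $(c^*,t(c^*))$, which inherits the original orientation. In the case $l=2n-1$ the definition reroots $t^{v,\rightarrow}$ so that the root corner lies immediately to the right of $v'$ in the contour, hence coincides with $c_{l+2}^+$; the corresponding edge in $\phi(t^{v,\rightarrow},\epsilon)$ is then the second edge of the new degenerate face in clockwise order around the vertex of $c_{l+2}$, oriented as $(c_{l+2},t(c_{l+2}))$ was. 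The main obstacle is purely bookkeeping in these two boundary cases, and once the local picture of Figure~\ref{fig:leaf translation static} is fixed the verification reduces to tracking a handful of corners adjacent to the perturbation.
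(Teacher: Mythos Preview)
Your approach of directly comparing the Schaeffer constructions on $t$ and $t^{v,\rightarrow}$ is valid in principle, but there is a genuine error in the central step. You assert that in $t^{v,\rightarrow}$ the four corners $c^*,c_{l+2}^-,c',c_{l+2}^+$ share a single target, ``by the same reasoning'' that showed $c_{l-1},c_l,c_{l+1}$ share a target in $t$. That earlier reasoning relied on the three corners having the \emph{same label}. But $c^*$ has label $l(v)$, while $c_{l+2}^-,c',c_{l+2}^+$ have label $l(c_{l+2})$, and these coincide only in the case $l(c_{l+2})=l(v)$. When $l(c_{l+2})=l(v)-1$ the target of $c^*$ is $c_{l+2}^-$ itself, and when $l(c_{l+2})=l(v)+1$ the targets differ for a different reason. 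In all three cases only the three corners $c_{l+2}^-,c',c_{l+2}^+$ share a target (namely $t(c_{l+2})$), and it is those \emph{three} edges that form the new degenerate face, not four.

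The fix is to treat $c^*$ separately: its target in $t^{v,\rightarrow}$ is the first corner of label $l(v)-1$ strictly after $c^*$, and a short case check (over the three possible values of $l(c_{l+2})$) shows this coincides with the common target $t(c_{l-1})=t(c_{l+1})$ of the collapsed pair in $t$. That is exactly what ``identifying the two edges $(c_{l-1},t(c_{l-1}))$ and $(c_{l+1},t(c_{l+1}))$'' produces. The paper avoids this case analysis by factoring the move through the intermediate tree $t'\in\LT_{n-1}$ obtained by deleting $v$: one observes once that deleting a leaf with label equal to its parent collapses the corresponding degenerate face in $\phi$, and then applies this observation and its reverse. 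Your direct route works after the correction, but the paper's decomposition is cleaner precisely because it makes the two local operations manifestly symmetric and independent of $l(c_{l+2})$.
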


\begin{proof}
Given $t\in\LT_n$ and $\epsilon\in\{-1,1\}$, a leaf $v$ such that $l(v)=l(p(v))$ is the internal vertex of a degenerate face in $\phi(t,\epsilon)$; removing the leaf (and the tree edge joining it to its parent) results in a tree $t'\in\LT_{n-1}$ and a quadrangulation $\phi(t',\epsilon)$ in which the face is eliminated by identifying the two edges of its boundary (into a root edge with the same orientation as before in the case where one of them was root edge in $\phi(t,\epsilon)$).
 
Notice that erasing the leaf $v$ from $t$ and from $t^{v,\rightarrow}$ yields the same tree $t'$. The quadrangulation $\phi(t^{v,\rightarrow},\epsilon)$ can thus be obtained by first performing the operation described above to build $\phi(t',\epsilon)$ and then performing it ``in reverse'' by replacing the appropriate edge (which is the one drawn from the corner of $t'$ that contains the edge joining $v$ to $p(v)$ in $t^{v,\rightarrow}$) with a degenerate face (see Figure~\ref{fig:leaf translation static}).
\end{proof}

The quadrangulation flip path $P_\epsilon(t,t^{v,\rightarrow})$ will depend on the label of the vertex $w$ of corner $c_{l+2}$ in $t$; since the cases where $l(w)=l(v)$ and $l(w)=l(v)+1$ are simpler (we can construct a path of length 1 in the first case and 3 in the second!), we refer to Figure~\ref{fig:-> over =} and Figure~\ref{fig:-> over +} for its construction, which only involves flips within two adjacent faces of $\phi(t,\epsilon)$. Notice that, furthermore, the construction preserves the edges issued from $c_{l-1}$ and $c_{l+2}$, so that the root edge is automatically the correct one in the quadrangulations $q_2$ from Figure~\ref{fig:-> over =} and $q_4$ from Figure~\ref{fig:-> over +}.

The case where $l(w)=l(v)-1$ is more complex, and will be treated in the following lemma.

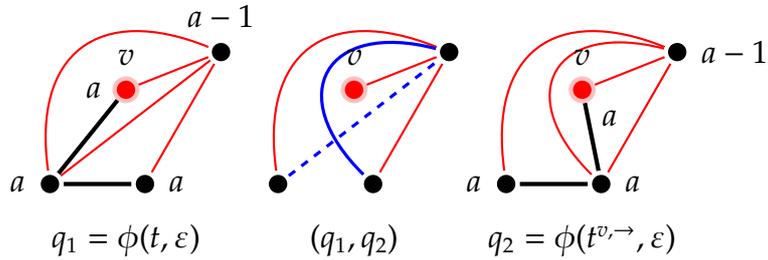
\begin{figure}\centering
\begin{tikzpicture}
	\begin{pgfonlayer}{nodelayer}
		\node [style=vertex, label=left:$a$] (0) at (-5.5, 1) {};
		\node [style=vertex, label=right:$a$] (1) at (-4.25, 1) {};
		\node [style=vertex, label=left:$a$, label=above:$v$, draw=pink, fill=red] (2) at (-4.5, 2.25) {};
		\node [style=vertex, label=above:$a-1$] (3) at (-3.25, 2.75) {};
		\node (4) at (-4.5, 0.25) {$q_1=\phi(t,\epsilon)$};
		\node (5) at (-1.5, 0.25) {$(q_1,q_2)$};
		\node [style=vertex] (6) at (-1.25, 1) {};
		\node [style=vertex] (7) at (-2.5, 1) {};
		\node [style=vertex, label=above:$v$, draw=pink, fill=red] (8) at (-1.5, 2.25) {};
		\node [style=vertex] (9) at (-0.25, 2.75) {};
		\node [style=vertex, label=left:$a$] (10) at (0.5, 1) {};
		\node [style=vertex, label=below right:$a$, label=above:$v$, draw=pink, fill=red] (11) at (1.5, 2.25) {};
		\node [style=vertex, label=right:$a$] (12) at (1.75, 1) {};
		\node [style=vertex, label=right:$a-1$] (13) at (2.75, 2.75) {};
		\node (14) at (1.5, 0.25) {$q_2=\phi(t^{v,\rightarrow},\epsilon)$};
	\end{pgfonlayer}
	\begin{pgfonlayer}{edgelayer}
		\draw [style=tree] (2) to (0);
		\draw [style=tree] (0) to (1);
		\draw [style=map, bend left=60, looseness=1.50] (0) to (3);
		\draw [style=map] (0) to (3);
		\draw [style=map] (2) to (3);
		\draw [style=map] (1) to (3);
		\draw [style=map, bend left=60, looseness=1.50] (7) to (9);
		\draw [blue, very thick, dashed] (7) to (9);
		\draw [style=map] (8) to (9);
		\draw [blue, very thick, bend left=75, looseness=2.00] (6) to (9);
		\draw [style=map] (6) to (9);
		\draw [style=tree] (10) to (12);
		\draw [style=map, bend left=60, looseness=1.50] (10) to (13);
		\draw [style=map] (11) to (13);
		\draw [style=map, bend left=75, looseness=2.00] (12) to (13);
		\draw [style=map] (12) to (13);
		\draw [style=tree] (11) to (12);
	\end{pgfonlayer}
\end{tikzpicture}
\caption{\label{fig:-> over =}The flip path $P_\epsilon(t,t^{v,\rightarrow})$ has length 1 when the edge $(v,p(v))$ is replanted onto a vertex which has the same label as $v$ and $p(v)$. Notice that the edge that needs to be flipped in order to change $\phi(t,\epsilon)$ into $\phi(t^{v,\rightarrow},\epsilon)$ \emph{cannot} be the root edge of $\phi(t,\epsilon)$, since it is not issued from the first corner of $t$.}
\end{figure}

\begin{figure}[t]\centering
\begin{tikzpicture}
	\begin{pgfonlayer}{nodelayer}
		\node [style=vertex, label=left:$a$] (0) at (-5.5, 1) {};
		\node [style=vertex, label=right:$a+1$] (1) at (-4.25, 1) {};
		\node [style=vertex, label=left:$a$, label=above:$v$, draw=pink, fill=red] (2) at (-4.75, 2.25) {};
		\node [style=vertex, label=left:$a$] (3) at (-3.25, 1.75) {};
		\node [style=vertex, label=above:$a-1$] (4) at (-3.25, 2.75) {};
		
		\node [style=vertex] (5) at (-1.25, 1) {};
		\node [style=vertex] (6) at (-0.25, 1.75) {};
		\node [style=vertex] (7) at (-0.25, 2.75) {};
		\node [style=vertex] (8) at (-2.5, 1) {};
		\node [style=vertex, label=above:$v$, draw=pink, fill=red] (9) at (-1.75, 2.25) {};
		\node [style=vertex, label=above:$v$, draw=pink, fill=red] (10) at (1.5, 2.25) {};
		\node [style=vertex] (11) at (1.75, 1) {};
		\node [style=vertex] (12) at (2.75, 2.75) {};
		\node [style=vertex] (13) at (2.75, 1.75) {};
		\node [style=vertex] (14) at (0.5, 1) {};
		\node [style=vertex] (15) at (3.5, 1) {};
		\node [style=vertex] (16) at (5.75, 1.75) {};
		\node [style=vertex] (17) at (4.75, 1) {};
		\node [style=vertex] (18) at (5.75, 2.75) {};
		\node [style=vertex, label=above:$v$, draw=pink, fill=red] (19) at (4.5, 2.25) {};
		
		\node [style=vertex, label=above:$v$, label=left:$a+1$, draw=pink, fill=red] (20) at (7.5, 2.25) {};
		\node [style=vertex, label=right:$a+1$] (21) at (7.75, 1) {};
		\node [style=vertex, label=above:$a-1$] (22) at (8.75, 2.75) {};
		\node [style=vertex, label=right:$a$] (23) at (8.75, 1.75) {};
		\node [style=vertex, label=left:$a$] (24) at (6.5, 1) {};
		
		\node (x) at (-4.5,0.25) {$\phi(t,\epsilon)=q_1$};
		\node (x) at (-1.5,0.25) {$(q_1,q_2)$};
		\node (x) at (1.5,0.25) {$(q_2,q_3)$};
		\node (x) at (4.5,0.25) {$(q_3,q_4)$};
		\node (x) at (7.5,0.25) {$q_4=\phi(t^{v,\rightarrow},\epsilon)$};
	\end{pgfonlayer}
	\begin{pgfonlayer}{edgelayer}
		\draw [style=tree] (2) to (0);
		\draw [style=tree] (0) to (1);
		\draw [style=map, bend left=60, looseness=1.50] (0) to (4);
		\draw [style=map] (0) to (4);
		\draw [style=map] (2) to (4);
		\draw [style=map] (1) to (3);
		\draw [style=map] (3) to (4);
		\draw [style=map, bend left=60, looseness=1.50] (8) to (7);
		\draw [blue, very thick, dashed] (8) to (7);
		\draw [style=map] (9) to (7);
		\draw [style=map] (5) to (6);
		\draw [style=map] (6) to (7);
		\draw [blue, very thick] (9) to (5);
		\draw [style=map, bend left=60, looseness=1.50] (14) to (12);
		\draw [blue, very thick, dashed] (10) to (12);
		\draw [blue, very thick, bend left=105, looseness=4.25] (11) to (13);
		\draw [style=map] (13) to (12);
		\draw [style=map] (10) to (11);
		\draw [style=map] (11) to (13);
		\draw [style=map, bend left=60, looseness=1.50] (15) to (18);
		\draw [style=map, bend left=105, looseness=4.25] (17) to (16);
		\draw [style=map] (16) to (18);
		\draw [blue, very thick, dashed] (19) to (17);
		\draw [style=map] (17) to (16);
		\draw [blue, very thick] (19) to (16);
		\draw [style=map, bend left=60, looseness=1.50] (24) to (22);
		\draw [style=map, bend left=105, looseness=4.25] (21) to (23);
		\draw [style=map] (23) to (22);
		\draw [style=map] (21) to (23);
		\draw [style=map] (20) to (23);
		\draw [style=tree] (24) to (21);
		\draw [style=tree] (21) to (20);
		
		\draw [style=map, bend right] (0) to (1);
		\draw [style=map, bend right] (8) to (5);
		\draw [style=map, bend right] (14) to (11);
		\draw [style=map, bend right] (15) to (17);
		\draw [style=map, bend right] (24) to (21);
	\end{pgfonlayer}
\end{tikzpicture}
\caption{\label{fig:-> over +}The flip path $P_\epsilon(t,t^{v,\rightarrow})$ when $l(w)=l(v)+1$, where $w$ is the vertex that becomes the new parent of $v$ in $t^{v,\rightarrow}$.}
\end{figure}
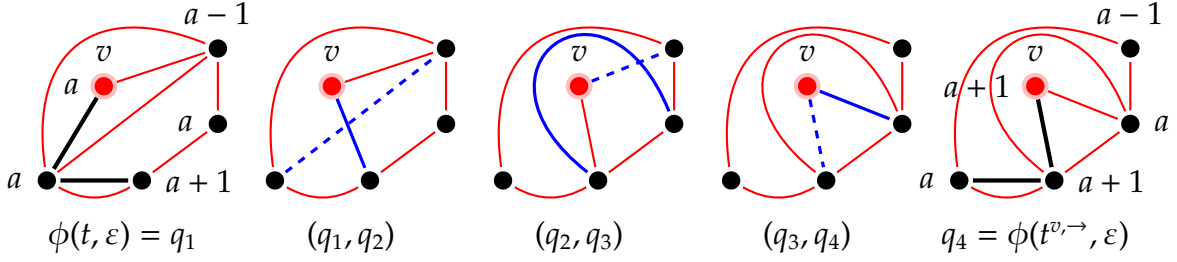

\begin{lemma}[Flip path construction, leaf translation onto smaller label]\label{leaf translation construction} Consider  a tree $t\in\LT_n$ with contour $c_1,\ldots,c_{2n}$ and let $v$ be a leaf of $t$ such that $l(v)=l(p(v))$, adjacent to a corner $c_l$ with $2\leq l\leq 2n-1$. Let $w$ be the vertex of the corner $c_{l+2}$ and further suppose that $l(w)=l(v)-1$.

For $\epsilon\in\{-1,1\}$, let $e_1,\ldots,e_{k-1}$ be the edges of $\phi(t,\epsilon)$ that are adjacent to $w$ and lie strictly between the edge $e_k=(c_l,c_{l+2})$ and the edge $(c_{l+2},t(c_{l+2}))$, in clockwise order around $w$. If none of them is the root edge of $\phi(t,\epsilon)$, then we can set $P_\epsilon(t,t^{v,\rightarrow})=(q_i,e_i,-)_{i=1}^k$, and we have $q_{k+1}:=q_k^{e_k,-}=\phi(t^{v,\rightarrow},\epsilon)$.
 
 If some $e_j$ is the root edge of $\phi(t,\epsilon)$, then we can set $P_\epsilon(t,t^{v,\rightarrow})$ to be the same flip path as above, with the root rotation flip sequence from Lemma~\ref{root rotation}, performed around the endpoint of the root edge that is not $w$, inserted right before the flip $(q_j,e_j,-)$.
 
See Figure~\ref{fig:P(t,t^{v,->})} for the construction.
\end{lemma}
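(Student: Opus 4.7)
I follow the strategy of Lemma~\ref{+->- construction}, establishing the claim by induction on $i$. The starting point is the static description supplied by Lemma~\ref{leaf translation static}: $\phi(t^{v,\rightarrow},\epsilon)$ differs from $\phi(t,\epsilon)$ only in the local region around $v$ and $w$, where the original degenerate face containing $v$ is collapsed (by identifying the edges $(c_{l-1},t(c_{l-1}))$ and $(c_{l+1},t(c_{l+1}))$) and a new degenerate face containing $v$ is inserted in place of the edge $(c_{l+2},t(c_{l+2}))$. Consequently, each edge $e_i$ in the sector of $w$ between $e_k$ and $(c_{l+2},t(c_{l+2}))$ is ``rerouted'' in $\phi(t^{v,\rightarrow},\epsilon)$ to an edge $\eta_i$ with the same opposite endpoint as $e_i$ but now attached to $v$ rather than $w$; one similarly reads off an image $\eta_k$ for $e_k$ from the boundary of the new degenerate face.

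The inductive claim is that, for each $i=1,\ldots,k$, the quadrangulation $q_i$ coincides with $q_1=\phi(t,\epsilon)$ except that the edges $e_1,\ldots,e_{i-1}$ have been replaced by $\eta_1,\ldots,\eta_{i-1}$. The base case is trivial. For the inductive step, I observe that in $q_i$ the edge $e_i$ is adjacent to two faces: one contains the most recently rerouted edge $\eta_{i-1}$ (or, when $i=1$, the pair of boundary edges of the soon-to-be-collapsed degenerate face of $v$), and the other contains the next unaltered edge $e_{i+1}$ (or $(c_{l+2},t(c_{l+2}))$ when $i=k$). Since the counterclockwise flip of $e_i$ acts only within the union of these two faces, a direct inspection of the flip procedure shows that it produces precisely the edge $\eta_i$. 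At the final step $i=k$, this same flip simultaneously completes both the collapse of the old degenerate face and the formation of the new one, yielding $q_k^{e_k,-}=\phi(t^{v,\rightarrow},\epsilon)$.

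The root edge subtlety is dealt with exactly as in Lemma~\ref{+->- construction}. If $e_j$ is the root of $\phi(t,\epsilon)$, then by the root convention of Lemma~\ref{leaf translation static} its rerouted image $\eta_j$ must carry the root in $\phi(t^{v,\rightarrow},\epsilon)$. The inductive description tells me that in $q_j$ the edges $\eta_{j-1}$ and $e_j$ share a face, with $\eta_{j-1}$ immediately preceding $e_j$ in the clockwise contour around their common non-$w$ endpoint. Injecting the rerooting sequence from Lemma~\ref{root rotation} at this point transfers the root to $\eta_{j-1}$ with the correct orientation, after which the remaining flips proceed as in the generic case. The main obstacle is the case analysis required when the local combinatorics degenerates -- for instance when $(c_{l+2},t(c_{l+2}))$ is itself a double edge, when $w$ has small degree so that only a few edges occupy the sector, or when the rerooting sequence interacts with the faces being modified -- but in every instance the counterclockwise flip of $e_i$ stays confined to the two faces it borders in $q_i$, so the generic argument adapts without difficulty.
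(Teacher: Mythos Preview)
Your central claim is wrong: the edges $e_1,\ldots,e_{k-1}$ are \emph{not} rerouted to $v$ in $\phi(t^{v,\rightarrow},\epsilon)$. In $t^{v,\rightarrow}$ the leaf $v$ keeps its colour $=$ but acquires $w$ as its parent, so $l(v)=l(w)=a-1$; by the Schaeffer rule the one quadrangulation edge at $v$ is the double edge of the new degenerate face, joining $v$ to the vertex of $t(c_{l+2})$. Thus $v$ has degree~$1$ in $\phi(t^{v,\rightarrow},\epsilon)$ and can absorb none of the $e_i$. Meanwhile, the corners whose target in $t$ was $c_{l+2}$ still target a corner of $w$ in $t^{v,\rightarrow}$, so $e_1,\ldots,e_{k-1}$ remain incident to $w$ in the final quadrangulation, exactly as Lemma~\ref{leaf translation static} describes. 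You have transplanted the mechanism of Lemma~\ref{+->- construction}, but that mechanism relies on $v$ acquiring a label strictly smaller than that of $w$, which does not happen here.

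The correct inductive picture is that the degenerate face containing $v$ \emph{slides} clockwise around $w$. For $2\le i\le k-1$, the quadrangulation $q_i$ is obtained from $q_1$ by collapsing the original degenerate face and installing a new one in place of the edge $e_i$: under the natural identification $e_k$ becomes its internal edge (so $v$ stays attached to $w$ throughout) and $e_i$ is its rightmost boundary edge, with the already-flipped $e_{i-1}$ as the other boundary edge, now parallel to $e_i$. The counterclockwise flip of $e_i$ pushes the face one notch further, onto $e_{i+1}$; after step $k-1$ it occupies the correct final slot, and the closing flip of $e_k$ detaches $v$ from $w$ and reattaches it to the vertex of $t(c_{l+2})$.

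Your root paragraph inherits the confusion and is internally inconsistent: you first assert the root should land on $\eta_j$, then transfer it to $\eta_{j-1}$, and you invoke a common non-$w$ endpoint of $\eta_{j-1}$ and $e_j$ that your own model does not supply. In the correct picture the two boundary edges of the degenerate face in $q_j$ are $e_j$ and the flipped $e_{j-1}$, which are parallel and hence share their non-$w$ endpoint; the sequence of Lemma~\ref{root rotation} moves the root from $e_j$ to the flipped $e_{j-1}$, which is never touched again and is precisely the edge issued from the root corner of $t^{v,\rightarrow}$.
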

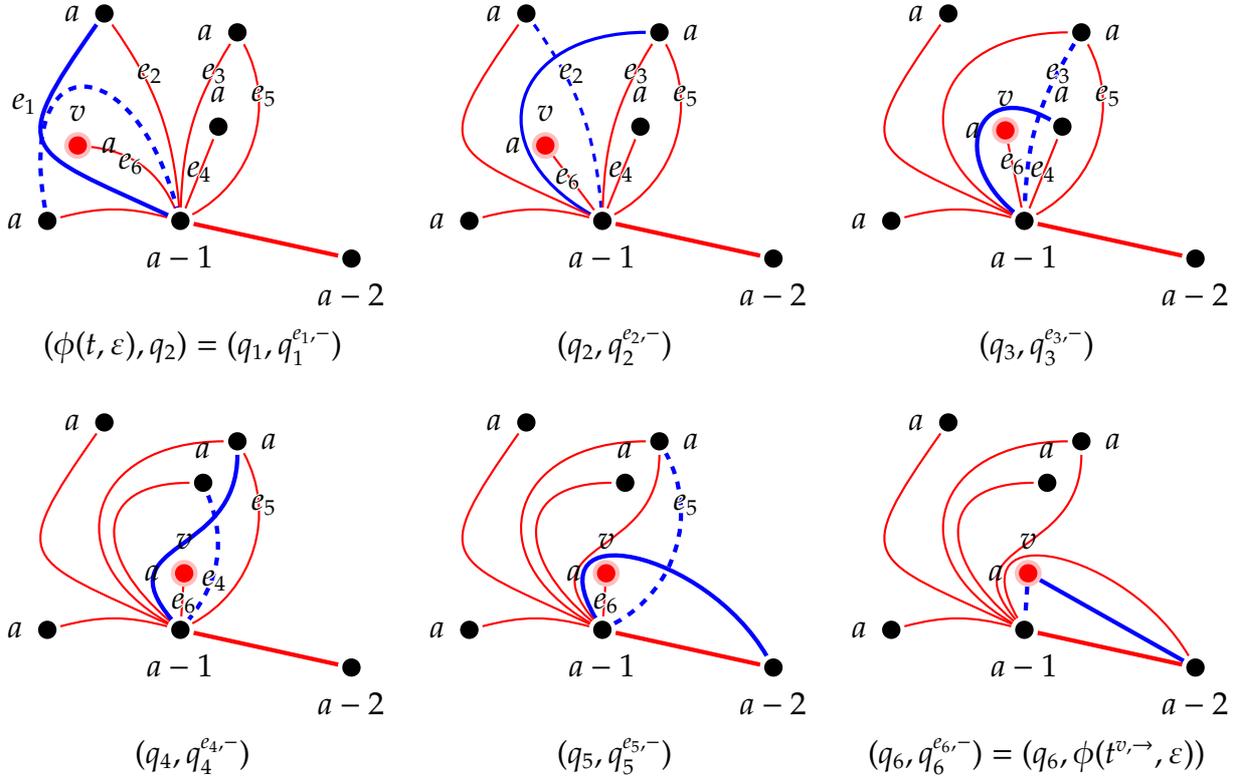
\begin{figure}[t]
\centering\begin{tabularx}{.9\textwidth}{>{\centering\arraybackslash}X>{\centering\arraybackslash}X>{\centering\arraybackslash}X}
 & & \\
\begin{tikzpicture} 
	\begin{pgfonlayer}{nodelayer}
		\node [style=vertex, label=left:$a$] (0) at (-2, -0) {};
		\node [style=vertex, fill=red, draw=pink, label=right:$a$, label=above:$v$] (1) at (-1.6, 1) {};
		\node [style=vertex, label=below:$a-1$] (2) at (-0.25, -0) {};
		\node [style=vertex, label=left:$a$] (3) at (-1.25, 2.75) {};
		\node [style=vertex, label=left:$a$] (4) at (0.5, 2.5) {};
		\node [style=vertex, label=below:$a-2$] (5) at (2, -0.5) {};
		\node [style=vertex, label=above:$a$] (6) at (0.25, 1.25) {};
	\end{pgfonlayer}
	\begin{pgfonlayer}{edgelayer}
		\draw [style=map, bend left, looseness=1.00] (1) to (2);
		\draw [style=map, bend left=15, looseness=1.00] (3) to (2);
		\draw [style=map, bend right=45, looseness=1.00] (2) to (4);
		\draw [style=map, ultra thick] (5) to (2);
		\draw [blue, ultra thick, dashed, in=105, out=100, looseness=3.25] (0) to (2);
		\draw [style=map, bend left=15, looseness=1.00] (0) to (2);
		\draw [style=map, bend right=15, looseness=1.00] (4) to (2);
		\draw [style=map] (6) to (2);
		\draw [blue, ultra thick, in=155, out=-125, looseness=2] (3) to (2);
		
		\node (e1) at (-0.9, 0.75) {\contour{white}{$e_6$}};
		\node (e1) at (-2.3, 1.55) {\contour{white}{$e_1$}};
		\node (e2) at (-0.65, 1.95) {\contour{white}{$e_2$}};
		\node (e3) at (0.2, 1.95) {\contour{white}{$e_3$}};
		\node (e4) at (0, 0.65) {\contour{white}{$e_4$}};
		\node (e5) at (0.85, 1.65) {\contour{white}{$e_5$}};
	\end{pgfonlayer}
\end{tikzpicture} &
\begin{tikzpicture} 
	\begin{pgfonlayer}{nodelayer}
		\node [style=vertex, label=left:$a$] (0) at (-2, -0) {};
		\node [style=vertex, fill=red, draw=pink, label=left:$a$, label=above:$v$] (1) at (-1, 1) {};
		\node [style=vertex, label=below:$a-1$] (2) at (-0.25, -0) {};
		\node [style=vertex, label=left:$a$] (3) at (-1.25, 2.75) {};
		\node [style=vertex, label=right:$a$] (4) at (0.5, 2.5) {};
		\node [style=vertex, label=below:$a-2$] (5) at (2, -0.5) {};
		\node [style=vertex, label=above:$a$] (6) at (0.25, 1.25) {};
	\end{pgfonlayer}
	\begin{pgfonlayer}{edgelayer}
		\draw [style=map] (1) to (2);
		\draw [blue, very thick, dashed, bend left=15, looseness=1.00] (3) to (2); 
		\draw [blue, very thick, in=180, out=150, looseness=1.80] (2) to (4); 
		\draw [style=map, bend right=45, looseness=1.00] (2) to (4);
		\draw [style=map, ultra thick] (5) to (2);
		\draw [style=map, bend left=15, looseness=1.00] (0) to (2);
		\draw [style=map, bend right=15, looseness=1.00] (4) to (2);
		\draw [style=map] (6) to (2);
		\draw [style=map, in=155, out=-125, looseness=2] (3) to (2);
		
		\node (e1) at (-0.7, 0.55) {\contour{white}{$e_6$}};
		\node (e2) at (-0.65, 1.95) {\contour{white}{$e_2$}};
		\node (e3) at (0.2, 1.95) {\contour{white}{$e_3$}};
		\node (e4) at (0, 0.65) {\contour{white}{$e_4$}};
		\node (e5) at (0.85, 1.65) {\contour{white}{$e_5$}};
	\end{pgfonlayer}
\end{tikzpicture} &
\begin{tikzpicture} 
	\begin{pgfonlayer}{nodelayer}
		\node [style=vertex, label=left:$a$] (0) at (-2, -0) {};
		\node [style=vertex, fill=red, draw=pink, label=left:$a$, label=above:$v$] (1) at (-0.5, 1.2) {};
		\node [style=vertex, label=below:$a-1$] (2) at (-0.25, -0) {};
		\node [style=vertex, label=left:$a$] (3) at (-1.25, 2.75) {};
		\node [style=vertex, label=right:$a$] (4) at (0.5, 2.5) {};
		\node [style=vertex, label=below:$a-2$] (5) at (2, -0.5) {};
		\node [style=vertex, label=above:$a$] (6) at (0.25, 1.25) {};
	\end{pgfonlayer}
	\begin{pgfonlayer}{edgelayer}
		\draw [style=map] (1) to (2);
		\draw [style=map, in=180, out=150, looseness=1.80] (2) to (4);
		\draw [style=map, bend right=45, looseness=1.00] (2) to (4);
		\draw [style=map, ultra thick] (5) to (2);
		\draw [style=map, bend left=15, looseness=1.00] (0) to (2);
		\draw [blue, ultra thick,dashed, bend right=15, looseness=1.00] (4) to (2); 
		\draw [blue, ultra thick, in=150, out=140, looseness=2.2] (2) to (6);
		\draw [style=map] (6) to (2);
		\draw [style=map, in=155, out=-125, looseness=2] (3) to (2);
		
		\node (e1) at (-0.4, 0.75) {\contour{white}{$e_6$}};
		\node (e3) at (0.2, 1.95) {\contour{white}{$e_3$}};
		\node (e4) at (0, 0.65) {\contour{white}{$e_4$}};
		\node (e5) at (0.85, 1.65) {\contour{white}{$e_5$}};
	\end{pgfonlayer}
\end{tikzpicture}\\
$(\phi(t,\epsilon),q_2)=(q_1,q_1^{e_1,-})$ & $(q_2,q_2^{e_2,-})$ & $(q_3,q_3^{e_3,-})$\\
 & & \\
 \begin{tikzpicture} 
	\begin{pgfonlayer}{nodelayer}
		\node [style=vertex, label=left:$a$] (0) at (-2, -0) {};
		\node [style=vertex, fill=red, draw=pink, label=left:$a$, label=above:$v$] (1) at (-0.2, 0.75) {};
		\node [style=vertex, label=below:$a-1$] (2) at (-0.25, -0) {};
		\node [style=vertex, label=left:$a$] (3) at (-1.25, 2.75) {};
		\node [style=vertex, label=right:$a$] (4) at (0.5, 2.5) {};
		\node [style=vertex, label=below:$a-2$] (5) at (2, -0.5) {};
		\node [style=vertex, label=above:$a$] (6) at (0.05, 1.95) {};
	\end{pgfonlayer}
	\begin{pgfonlayer}{edgelayer}
		\draw [style=map] (1) to (2);
		\draw [style=map, in=180, out=150, looseness=1.80] (2) to (4);
		\draw [style=map, bend right=45, looseness=1.00] (2) to (4);
		\draw [style=map, ultra thick] (5) to (2);
		\draw [style=map, bend left=15, looseness=1.00] (0) to (2);
		\draw [style=map, in=180, out=140, looseness=1.8] (2) to (6);
		\draw [ultra thick, blue, dashed, bend left] (6) to (2); 
		\draw [ultra thick, blue, in=-90, out=130,looseness=1.50] (2) to (4);
		\draw [style=map, in=155, out=-125, looseness=2] (3) to (2);
		
		\node (e1) at (-0.2, 0.35) {\contour{white}{$e_6$}};
		\node (e4) at (0.2, 0.65) {\contour{white}{$e_4$}};
		\node (e5) at (0.85, 1.65) {\contour{white}{$e_5$}};
	\end{pgfonlayer}
\end{tikzpicture}&
\begin{tikzpicture} 
	\begin{pgfonlayer}{nodelayer}
		\node [style=vertex, label=left:$a$] (0) at (-2, -0) {};
		\node [style=vertex, fill=red, draw=pink, label=left:$a$, label=above:$v$] (1) at (-0.2, 0.75) {};
		\node [style=vertex, label=below:$a-1$] (2) at (-0.25, -0) {};
		\node [style=vertex, label=left:$a$] (3) at (-1.25, 2.75) {};
		\node [style=vertex, label=right:$a$] (4) at (0.5, 2.5) {};
		\node [style=vertex, label=below:$a-2$] (5) at (2, -0.5) {};
		\node [style=vertex, label=above:$a$] (6) at (0.05, 1.95) {};
	\end{pgfonlayer}
	\begin{pgfonlayer}{edgelayer}
		\draw [style=map] (1) to (2);
		\draw [style=map, in=180, out=150, looseness=1.80] (2) to (4);
		\draw [ultra thick, blue, dashed, bend right=45, looseness=1.00] (2) to (4);
		\draw [ultra thick, blue, in=120, out=120, looseness=1.80] (2) to (5);
		\draw [style=map, ultra thick] (5) to (2);
		\draw [style=map, bend left=15, looseness=1.00] (0) to (2);
		\draw [style=map, in=180, out=140, looseness=1.8] (2) to (6);
		\draw [style=map, in=-90, out=130,looseness=1.50] (2) to (4);
		\draw [style=map, in=155, out=-125, looseness=2] (3) to (2);
		
		\node (e1) at (-0.2, 0.35) {\contour{white}{$e_6$}};
		\node (e5) at (0.85, 1.65) {\contour{white}{$e_5$}};
	\end{pgfonlayer}
\end{tikzpicture}&
\begin{tikzpicture} 
	\begin{pgfonlayer}{nodelayer}
		\node [style=vertex, label=left:$a$] (0) at (-2, -0) {};
		\node [style=vertex, fill=red, draw=pink, label=left:$a$, label=above:$v$] (1) at (-0.2, 0.75) {};
		\node [style=vertex, label=below:$a-1$] (2) at (-0.25, -0) {};
		\node [style=vertex, label=left:$a$] (3) at (-1.25, 2.75) {};
		\node [style=vertex, label=right:$a$] (4) at (0.5, 2.5) {};
		\node [style=vertex, label=below:$a-2$] (5) at (2, -0.5) {};
		\node [style=vertex, label=above:$a$] (6) at (0.05, 1.95) {};
	\end{pgfonlayer}
	\begin{pgfonlayer}{edgelayer}
		\draw [blue, ultra thick, dashed] (1) to (2); 
		\draw [blue, ultra thick] (1) to (5);
		\draw [style=map, in=180, out=150, looseness=1.80] (2) to (4);
		\draw [style=map, in=120, out=120, looseness=1.80] (2) to (5);
		\draw [style=map, ultra thick] (5) to (2);
		\draw [style=map, bend left=15, looseness=1.00] (0) to (2);
		\draw [style=map, in=180, out=140, looseness=1.8] (2) to (6);
		\draw [style=map, in=-90, out=130,looseness=1.50] (2) to (4);
		\draw [style=map, in=155, out=-125, looseness=2] (3) to (2);
		
	\end{pgfonlayer}
\end{tikzpicture}\\
$(q_4,q_4^{e_4,-})$ & $(q_5,q_5^{e_5,-})$ & $(q_6,q_6^{e_6,-})=(q_6,\phi(t^{v,\rightarrow},\epsilon))$\\
\end{tabularx}
\caption{\label{fig:P(t,t^{v,->})}The flip path $P_\epsilon(t,t^{v,\rightarrow})$ in the case where the edge $(v,p(v))$ is followed by a corner labelled $p(v)-1$ in the clockwise contour of $t$.}
\end{figure}
\begin{proof}
	One can show inductively that, for $i=2,\ldots,k-1$, the quadrangulation $q_i$ is obtained from $q_1$ by collapsing the face that contains the edge $e_k$ in $q_1$ and replacing the edge $e_i$ with a degenerate face whose internal vertex is adjacent to $w$; furthermore, the natural edge identification between $q_1$ and $q_i$ has $e_k$ correspond to the internal edge of the degenerate face, while $e_i$ corresponds to the `rightmost' boundary edge of the newly created degenerate face in clockwise order around $w$. Then, $q_k$ is obtained by flipping $e_{k-1}$ counterclockwise. By Lemma~\ref{leaf translation static}, the only difference between $q_k$ and $\phi(t^{v,\rightarrow},\epsilon)$ is the fact that the internal edge of the new degenerate face is incident to $w$ in $q_k$ and, potentially, the choice of the root edge (in the case where the root edge of $q_1$ is among the flipped edges $e_1,\ldots,e_{k-1}$). Flipping $e_k$ in $q_k$ -- thus obtaining $q_{k+1}$ -- is enough to fix the first issue, and yields $\phi(t^{v,\rightarrow},\epsilon)$ up to rerooting. Suppose now that the root edge of $q_1$ is some $e_j$ (with $1\leq j\leq k-1$); the edge issued from the same corner in $q_{k+1}$ is actually the flipped version of edge $e_{j-1}$ (or the edge out of $c_{l+1}$, which never gets flipped, in the case where $l=2$: for ease of notation we will call it $e_0$). In the quadrangulation $q_j$, $e_j$ (not yet flipped and still the root edge) and $e_{j-1}$ (already flipped, unless $j=1$) are consecutive in clockwise order around their endpoint that is not $w$. Performing the root rotation sequence before $(q_j,e_j,-)$ thus simply results in rerooting $q_j$ in its edge $e_{j-1}$ (with the correct orientation), which will not be flipped again and will end up being the correct root edge in $q_{k+1}$ once the rest of the flips are performed.
	\end{proof}

As mentioned before, we construct $P_\epsilon(t,t^{v,\rightarrow})$ in general as the concatenation of $P_\epsilon(t,t^{v,=})$, $P_\epsilon(t^{v,=},(t^{v,=})^{v,\rightarrow})$ and $P_\epsilon((t^{v,=})^{v,\rightarrow},((t^{v,=})^{v,\rightarrow})^{v,c})$, where $c$ is chosen so as to ``restore'' the colour of $(v,p(v))$ to the original one from $t$. We can further set $P_\epsilon(t^{v,\rightarrow},(t^{v,\rightarrow})^{v,\leftarrow})$ to be the reverse sequence of $P_\epsilon(t,t^{v,\rightarrow})$ (keeping in mind that $P_\epsilon(t,t)$ is already set to be empty). Notice that, using Lemma~\ref{label move path length} and the fact that the construction from Lemma~\ref{leaf translation construction}, excluding root rotation sequences, does not flip the same edge twice, we immediately have \begin{equation}\label{translation sequence length}\left|P_\epsilon(t^{v,d},t)\right|\leq 6n+17\end{equation}
for all $t\in\LT_n$, $v$ leaf of $t$, $d\in\{\rightarrow,\leftarrow\}$, $\epsilon\in\{-1,1\}$.

Additionally, we have the following lemma.
 
\begin{lemma}\label{leaf translation congestion}
	Let $(q,e,s)$ be a triple with $q\in\sQ^\bullet_n$, $e\in E(q)$, $s\in\{+,-\}$; then there is a constant $C$ such that there are at most $C$ quadruples $(t,v,d,\epsilon)$, where $t\in\LT_n$, $v$ is a leaf in $t$, $d\in \{\rightarrow, \leftarrow\}$ and $\epsilon=\pm1$, for which $(q,e,s)$ appears in the flip path $P_\epsilon(t,t^{v,d})$.
\end{lemma}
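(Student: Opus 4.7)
I would mirror the structure of the proof of Lemma~\ref{label move congestion}. By the construction in Section~\ref{section: leaf translation}, the path $P_\epsilon(t,t^{v,d})$ (say for $d=\rightarrow$; the case $d=\leftarrow$ follows by path reversal) decomposes as the concatenation of at most three sub-paths: (i) a colour-change path $P_\epsilon(t,t^{v,=})$ of length at most $2n+6$; (ii) the ``translation proper'' path turning $\phi(t^{v,=},\epsilon)$ into $\phi((t^{v,=})^{v,\rightarrow},\epsilon)$, which is either trivial, of length $1$ or $3$ as in Figures~\ref{fig:-> over =} and~\ref{fig:-> over +}, or constructed via Lemma~\ref{leaf translation construction}; (iii) a further colour-change path restoring the original colour of $(v,p(v))$. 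I would handle each segment separately, bounding by a constant the number of quadruples $(t,v,d,\epsilon)$ for which $(q,e,s)$ can appear in that segment.

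Segments (i) and (iii) reduce directly to Lemma~\ref{label move congestion}. If $(q,e,s)$ appears in $P_\epsilon(t,t^{v,=})$, then Lemma~\ref{label move congestion} applied to the quadruple $(t,v,=,\epsilon)$ already bounds the number of such $(t,v,\epsilon)$ by a constant; the direction $d\in\{\rightarrow,\leftarrow\}$ contributes only an additional factor of two. For (iii), Lemma~\ref{label move congestion} determines the tree $(t^{v,=})^{v,\rightarrow}$, the leaf $v$, the recolouring parameter, and the sign $\epsilon$ up to finitely many choices; undoing the (known) leaf translation and then the recolouring recovers $t$ up to a finite multiplicative factor.

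The substantive part is segment (ii). Here I would follow the same template as in Lemma~\ref{label move congestion}, exploiting Lemma~\ref{leaf translation construction}: outside any injected root-rotation sub-sequence, every flip is of the form $(q_i,e_i,-)$, the edges $e_i$ share the common endpoint $w$, and the degree of $v$ strictly increases with $i$. Consequently, given $(q,e,s)$ with $s=-$, the two endpoints of $e$ in $q^{e,s}$ give two candidate values for $v$; for each candidate, the position $i$ is forced by the degree of $v$, and listing the edges incident to $v$ in $q^{e,s}$ clockwise lets me reconstruct the initial quadrangulation $\phi(t^{v,=},\epsilon)$ by inverting the already-performed flips. This yields $O(1)$ candidates for $\phi(t^{v,=},\epsilon)$, each of which determines $(t^{v,=},v,\epsilon)$ via $\phi^{-1}$ and hence $t$ up to the three choices of the original colour of $(v,p(v))$.

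The main obstacle is accommodating the optional root-rotation sub-sequences that appear both in segment (ii) (Lemma~\ref{leaf translation construction}) and inside the colour-change paths (Lemma~\ref{+->- construction}). I would handle this exactly as in Lemma~\ref{label move congestion}: such a sub-sequence lives entirely within the union of at most three adjacent faces, so given a flip $(q,e,s)$ belonging to it, both the quadrangulation entering and the one leaving the sub-sequence are determined up to $O(1)$ possibilities; the rest of the reconstruction argument then proceeds unchanged. Combining the constants across the three segments and the finitely many cases yields the desired uniform bound $C$.
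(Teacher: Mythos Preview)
Your decomposition into segments (i), (ii), (iii) and the reduction of (i) and (iii) to Lemma~\ref{label move congestion} are correct and match the paper's own reduction. However, your reconstruction argument for segment (ii) contains a genuine error.

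You claim that along the flip path of Lemma~\ref{leaf translation construction}, ``the degree of $v$ strictly increases with $i$'', and you use this to pin down the index $i$ from $(q,e,s)$ and then unwind the flips. This is false: you have imported the mechanism of Lemma~\ref{+->- construction} (colour change $+\to-$), where each flip indeed attaches a new edge to $v$, but the leaf translation construction behaves differently. As the proof of Lemma~\ref{leaf translation construction} states (and Figure~\ref{fig:P(t,t^{v,->})} shows), for $2\le i\le k$ the quadrangulation $q_i$ is obtained from $q_1$ by collapsing the original degenerate face and replacing the edge $e_i$ by a \emph{new} degenerate face whose internal vertex (which is $v$ under the natural identification, since $e_k$ corresponds to its internal edge) is adjacent to $w$. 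Thus $v$ has degree~$1$ in every $q_i$, and the flipped edge $e_i$ in $q_i^{e_i,-}$ has $w$, not $v$, as an endpoint. Your procedure of reading off $v$ as an endpoint of $e$ in $q^{e,s}$ and using its degree to locate $i$ therefore fails.

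The paper's reconstruction for this case is different: from $(q,e,s)$ one identifies the candidate vertex $w$ among the two endpoints of $e$ in $q^{e,s}$; the degenerate face containing $v$ must then lie immediately to one side of $e$ around $w$, which locates $v$; and $\phi(t^{v,\rightarrow},\epsilon)$ is recovered by sliding this degenerate face clockwise around $w$ until one meets the first edge $(w,u)$ with $u$ strictly closer to $\delta$. This gives $O(1)$ candidates without any degree-counting. Your argument would be repaired by substituting this reconstruction for the one you propose.
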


\begin{proof}
By lemma \ref{label move congestion}, the number of such quadruples is at most a constant times the number of those where $l(v)=l(p(v))$, so we shall restrict ourselves to the latter case; since $P_\epsilon(t,t^{v,\rightarrow})$ is the reverse of $P_\epsilon(t^{v,\rightarrow},t)$, we may also suppose $d=\rightarrow$. 

Suppose $(t,v,\rightarrow,\epsilon)$ is a quadruple such that $l(v)=l(p(v))$ and $(q,e,s)$ appears in $P_\epsilon(t,t^{v,\rightarrow})$, and let $c_1,\ldots,c_{2n}$ be the clockwise contour of $t$, with $c_l$ being the corner of $v$. If $l(c_{l+2})=l(v)$, then $q=\phi(t,\epsilon)$ and $v$ is uniquely determined from the flip $(q,e,s)$. 

If $l(c_{l+2})=l(v)+1$, then there are a few possibilities (refer again to Figure~\ref{fig:-> over +}). If $q$ has one more degree one vertex than $q^{e,s}$, then $q=\phi(t,\epsilon)$ and $v$ is that degree one vertex. If $q^{e,s}$ has one more degree one vertex than $q$, let $e'$ be the edge issued from that vertex; then $v$ is the vertex in question and $\phi(t^{v,\rightarrow},\epsilon)=(q^{e,s})^{e',-}$. Otherwise, $e$ is a double edge within a degenerate face, $q^{e,s}=\phi(t^{v,\rightarrow},\epsilon)$ and $v$ is its degree one endpoint.

Finally, if $l(c_{l+2})=l(v)-1$, then consider the edge $e$ in $q^{e,s}$. First, let's suppose that $(q,e,s)$ does not belong to a root rotating sequence. If $e$ is the interior edge of a degenerate face, then $\phi(t^{v,\rightarrow},\epsilon)=q^{e,s}$ and $v$ is its endpoint of degree 1. Otherwise, consider the two endpoints $w_1$, $w_2$ of $e$ in $q^{e,s}$; one of them must play the role of the vertex $w$ from Lemma~\ref{leaf translation construction}. If it is $w_1$, then to the right of the oriented edge $(w_1,w_2)$ in $q^{e,s}$ lies a degenerate face with an internal vertex connected to $w_1$. If $(w_1,u)$ is the first oriented edge in clockwise order around $w$ starting with $(w_1,w_2)$ such that $u$ is strictly nearer to $\delta$ than $w_1$, the construction of the flip path implies that $\phi(t^{v,\rightarrow},\epsilon)$ can be obtained from $q^{e,s}$ by collapsing the face lying right of $(w_1,w_2)$ and replacing $(w_1,u)$ with a new degenerate face whose internal edge is issued from $u$ and has $v$ as the other endpoint. An analogous argument holds for $w_2$, giving rise to only two possibilities for $(t,v,\rightarrow)$. If $(q,e,s)$ actually belongs to a root rotation sequence, then the the number of possibilities for the final quadrangulation $q'$ obtained by completing the sequence is bounded by a constant; we can then use $q'$ and its root edge in place of $q^{e,s}$ and $e$ to reconstruct the final quadrangulation $\phi(t^{v,\rightarrow},\epsilon)$  and its vertex $v$ (from which one determines $t$ as $(t^{v,\rightarrow})^{v,\leftarrow}$) in (at most) two ways.\end{proof}

\subsection{The final comparison between $\F^{n,\bullet}$ and $\VLT$}\label{section: final comparison}

\begin{proof}[Proof of Theorem~\ref{main theorem}]
The upper bound for the spectral gap of $\F^n$ is Proposition~\ref{upper bound}; we set out to prove the lower bound. 

Consider the chains $\VLT$ and $\F^{n,\bullet}$ from Section~\ref{chain comparison} and their respective spectral gaps $\widetilde{\gamma}$ and $\nu_n^\bullet$, and let $\nu_n$ be the spectral gap of $\F^n$. Also recall the Schaeffer correspondence $\phi:\LT_n\times\{-1,1\}\to\sQ_n^\bullet$ from Section~\ref{section: Schaeffer} and the flip paths $P(t)$ and $P_\epsilon(t,t^{v,x})$ constructed throughout Sections~\ref{section: rerooting}, \ref{section: colour change} and~\ref{section: leaf translation}.

Let $f:\sQ_n^\bullet\to\mathbb{R}$ be a function such that $\V_{\pi^\bullet}(f)=1$ (where $\pi^\bullet$ is the uniform measure on $\sQ_n^\bullet$) and $\mathcal{E}_{\F^{n,\bullet}}(f,f)=\nu_n^\bullet$. Then thanks to Corollary~\ref{leaf translation variant gap} we have

$$Cn^{-\frac{9}{2}}\leq\widetilde{\gamma}\leq\mathcal{E}_\VLT(f\circ\phi,f\circ\phi)=\frac12\sum_{\substack{t,t'\in\LT_n\\\epsilon,\epsilon'\in\{-1,1\}}}\left(f(\phi(t,\epsilon))-f(\phi(t',\epsilon'))\right)^2\frac{1}{2|\LT_n|}p_\VLT(t,t')$$
$$=\frac12\sum_{\substack{t\in\LT_n\\v\mbox{ leaf of }t\\x\in\{\rightarrow,\leftarrow,+,-,=\}\\\epsilon\in\{1,-1\}
}}\left(f(\phi(t,\epsilon))-f(\phi(t^{v,x},\epsilon))\right)^2\frac{1}{2|\LT_n|}\frac{1}{5(n+1)}+
\sum_{\substack{t\in\LT_n
}}\left(f(\phi(t,1))-f(\phi(t,-1))\right)^2\frac{1}{2|\LT_n|}\frac{1}{n+1}.$$

Now we may rewrite each difference within the sums above in terms of the images of subsequent quadrangulations appearing in the paths $P_\epsilon(t,t^{v,x})$ and $P(t)$, apply the Cauchy-Schwarz inequality and tweak the constants in order to recover factors of the form $p_{\F^{n,\bullet}}(q,q')$. We obtain that the expression above is at most
$$\frac{1}{2|\LT_n|}\frac{1}{n}\left(\sum_{\substack{t\in\LT_n\\v\mbox{ leaf of }t\\x\in\{\rightarrow,\leftarrow,+,-,=\}\\\epsilon\in\{1,-1\}}}
\left(\sum_{\substack{i=0,\ldots,|P_\epsilon(t,t^{v,x})|\\P_\epsilon(t,t^{v,x})=(q_i,e_i,s_i)_{i=1}^N}}
f(q_i)-f(q_i^{e_i,s_i})\right)^2+
\sum_{\substack{t\in\LT_n}}
\left(\sum_{\substack{i=0,\ldots,|P(t)|\\P(t)=(q_i,e_i,s_i)_{i=1}^N}}f(q_i)-f(q_i^{e_i,s_i})\right)^2\right)$$

$$\leq6\sum_{\substack{t\in\LT_n\\v\mbox{ leaf of }t\\x\in\{\rightarrow,\leftarrow,+,-,=\}\\\epsilon\in\{1,-1\}}}
|P_\epsilon(t,t^{v,x})|
\sum_{\substack{i=0,\ldots,|P_\epsilon(t,t^{v,x})|\\P_\epsilon(t,t^{v,x})=(q_i,e_i,s_i)_{i=1}^N}}
(f(q_i)-f(q_i^{e_i,s_i}))^2\frac{1}{|\sQ^\bullet_n|}\frac{1}{6n}$$
$$+6\sum_{\substack{t\in\LT_n}}|P(t)|
\sum_{\substack{i=0,\ldots,|P(t)|\\P(t)=(q_i,e_i,s_i)_{i=1}^N}}(f(q_i)-f(q_i^{e_i,s_i}))^2\frac{1}{|\sQ^\bullet_n|}\frac{1}{6n}.$$

Given $q\in\sQ_n^\bullet$, $e\in E(q)$, $s\in\{+,-\}$, write $C(q,e,s)$ for 
$$\left|\{t,v,x,\epsilon\st(q,e,s)\mbox{ appears in }P_\epsilon(t,t^{v,x})\}\right|+\left|\{t\st(q,e,s)\mbox{ appears in }P(t)\}\right|.$$
Thanks to Lemma~\ref{root reversal estimates}, Lemma~\ref{label move congestion} and Lemma~\ref{leaf translation congestion}, there is a constant $M$ independent of $n$ such that $C(q,e,s)\leq M$ for all $q,e,s$. Furthermore, Lemma~\ref{root reversal estimates}, Lemma~\ref{label move path length} and \eqref{translation sequence length} imply that $\max\{|P(t)|,|P_\epsilon(t,t^{v,x})|\}\leq 7n$ for all $n\geq 17$, $t\in\LT_n$, $v$ leaf of $t$, $x\in\{\rightarrow,\leftarrow,+,=,-\}$, $\epsilon\in\{-1,1\}$. From this we obtain that, for some constant $C'$,
$$Cn^{-\frac{9}{2}}\leq \frac{C'n}{2} \sum_{\substack{q\in\sQ^\bullet_n\\e\in E(q)\\s\in\{+,-\}}}C(q,e,s)(f(q)-f(q^{e,s}))^2\frac{1}{|\sQ^\bullet_n|}\frac{1}{6n}\leq
C'n M\cdot\mathcal{E}_{\F^{n,\bullet}}(f,f)=C'n M\nu_n^\bullet\leq C'M n \nu_n,$$
where the last inequality follows from Lemma~\ref{pointed comparison}. Following the chain of inequalities, we have indeed shown that $\nu_n\geq C_1n^{-\frac{11}{2}}$ for some constant $C_1$ independent of $n$.
\end{proof}

\bibliographystyle{siam}

\end{document}